\patchcmd{\thebibliography}{\section*{\refname}}{}{}{}
\def\BibTeX{{\rm B\kern-.05em{\sc i\kern-.025em b}\kern-.08em
		T\kern-.1667em\lower.7ex\hbox{E}\kern-.125emX}}
\renewcommand{\headrulewidth}{2pt}
\newlength\FHoffset
\newlength\FHleft
\newlength\FHright
\newbox\FHline
\newcommand{\RR}{\mathbb{R}}
\newcommand{\PP}{\mathsf{P}}
\newcommand{\BB}{\mathsf{B}}
\newcommand{\tr}{\mathrm{tr}}
\newcommand{\te}{\mathrm{te}}
\newcommand{\MM}{\mathbf{M}}
\newcommand{\UU}{\mathbf{U}}
\newcommand{\VV}{\mathbf{V}}
\newcommand{\WW}{\mathbf{W}}
\newcommand{\ZZ}{\mathbf{Z}}
\newcommand{\XX}{\mathbf{X}}
\newcommand{\YY}{\mathbf{Y}}
\newcommand{\KK}{\mathsf{K}}
\newcommand{\rank}{\mathsf{rank}}
\newcommand{\trace}{\mathsf{trace}}
\newcommand{\FF}{\mathrm{F}}
\newcommand{\DD}{\mathcal{D}}
\newtheoremstyle{theoremdd}% name of the style to be used
{\topsep}% measure of space to leave above the theorem. E.g.: 3pt
{\topsep}% measure of space to leave below the theorem. E.g.: 3pt
{\itshape}% name of font to use in the body of the theorem
{0pt}% measure of space to indent
{\fontfamily{cmss}\selectfont\bfseries}% name of head font
{.}% punctuation between head and body
{ }% space after theorem head; " " = normal interword space
{\thmname{#1}\thmnumber{ #2}\thmnote{ (#3)}}
\theoremstyle{theoremdd}
\newtheorem{theorem}{Theorem}[section]
\newtheorem{lemma}{Lemma}
\newtheorem{corollary}{Corollary}
\titleformat*{\section}{\fontfamily{cmss}\selectfont\large\bfseries\color{red!40!black}}
\titleformat*{\subsection}{\fontfamily{cmss}\selectfont\normalsize\bfseries\color{red!40!black}}
\titleformat*{\subsubsection}{\fontfamily{cmss}\selectfont\normalsize\color{red!40!black}}
\renewcommand\abstractname{\fontfamily{cmss}\selectfont\normalsize\bfseries\color{red!40!black}\textbf{Abstract}}
\renewenvironment{abstract}{%
	\centering\small
%	\textbf\abstractname
	\list{}{\leftmargin1.5cm \rightmargin\leftmargin}
	\item\relax
	
	\begin{mdframed}[]
	%\begin{mdframed}[style=myStyle]
		\item[\hskip\labelsep\scshape\abstractname.]%
	}{%
	\end{mdframed}
%	}{%
	\endlist \par\bigskip
}
\patchcmd{\@maketitle}{\LARGE \@title}{\fontfamily{cmss}\selectfont\LARGE\color{red!40!black}\@title}{}{}
\begin{document}

%	\begin{frontmatter}
		
		%% Title, authors and addresses
		
		\title{Nonlinear Traffic Prediction as a Matrix Completion Problem with Ensemble Learning}
		
		%% use the tnoteref command within \title for footnotes;
		%% use the tnotetext command for the associated footnote;
		%% use the fnref command within \author or \address for footnotes;
		%% use the fntext command for the associated footnote;
		%% use the corref command within \author for corresponding author footnotes;
		%% use the cortext command for the associated footnote;
		%% use the ead command for the email address,
		%% and the form \ead[url] for the home page:
		%%
		%% \title{Title\tnoteref{label1}}
		%% \tnotetext[label1]{}
		%% \author{Name\corref{cor1}\fnref{label2}}
		%% \ead{email address}
		%% \ead[url]{home page}
		%% \fntext[label2]{}
		%% \cortext[cor1]{}
		%% \address{Address\fnref{label3}}
		%% \fntext[label3]{}
% {\footnotesize \textsuperscript{*}Note: Sub-titles are not captured in Xplore and
% should not be used}
% \thanks{Identify applicable funding agency here. If none, delete this.}
% }
			
		%% use optional labels to link authors explicitly to addresses:
		%% \author[label1,label2]{<author name>}
		%% \address[label1]{<address>}
		%% \address[label2]{<address>}

\author[1]{Wenqing Li}
\author[2]{Chuhan Yang}
\author[1,2]{Saif Eddin Jabari$^{\star}$}

\affil[1]{New York University Abu Dhabi, Saadiyat Island, P.O. Box 129188, Abu Dhabi, U.A.E.}
\affil[2]{New York University Tandon School of Engineering, Brooklyn NY, U.S.A.}

\date{}

%\begin{mdframed}[style=myStyle]
%\end{mdframed}

\twocolumn[
\begin{@twocolumnfalse}
	
\maketitle	

\begin{abstract}
	This paper addresses the problem of short-term traffic prediction for signalized traffic operations management.  Specifically, we focus on predicting sensor states in high-resolution (second-by-second).  This contrasts with traditional traffic forecasting problems, which have focused on predicting aggregated traffic variables, typically over intervals that are no shorter than 5 minutes.  Our contributions can be summarized as offering three insights: first, we show how the prediction problem can be modeled as a matrix completion problem.  Second, we employ a block-coordinate descent algorithm and demonstrate that the algorithm converges in sub-linear time to a block coordinate-wise optimizer.  This allows us to capitalize on the ``bigness'' of high-resolution data in a computationally feasible way.  Third, we develop an ensemble learning (or adaptive boosting) approach to reduce the training error to within any arbitrary error threshold.  The latter utilizes past days so that the boosting can be interpreted as capturing periodic patterns in the data.  The performance of the proposed method is analyzed theoretically and tested empirically using both simulated data and a real-world high-resolution traffic dataset from Abu Dhabi, UAE.  Our experimental results show that the proposed method outperforms other state-of-the-art algorithms. 
	
	\medskip
	
	\textbf{\fontfamily{cmss}\selectfont\color{red!40!black} Keywords}: Traffic prediction, high-resolution data, signalized intersections, adaptive control, matrix completion, kernel regression, sparse approximation, ensemble learning, adaptive boosting.
\end{abstract}
\bigskip
\end{@twocolumnfalse}
]

%\begin{keyword}
	%% keywords here, in the form: keyword \sep keyword
%	Automated vehicles \sep cellular Automata \sep conditional random fields \sep stochastic traffic modeling \sep traffic state estimation \sep trajectory reconstruction
	%% MSC codes here, in the form: \MSC code \sep code
	%% or \MSC[2008] code \sep code (2000 is the default)
%\end{keyword}
		
%\end{frontmatter}
	
	%%
	%% Start line numbering here if you want
	%%
	
%	\linenumbers
	
%% main text

\section{Introduction}
\label{S:intro}
Governments worldwide expend considerable effort and investment to manage the day-to-day operations of urban networks.  Today's urban road networks are highly complex interacting systems of technologies that include sensory devices, communications technologies, and monitoring tools.  These serve a diverse set of operational objectives, from rapid response to incidents (\emph{safety}) to managing emergencies and special events (\emph{security}) to the day-to-day operation of traffic lights and congestion (\emph{efficiency}).  Adaptive control systems, namely traffic lights \citep{guo2019urban} and ramp meters \citep{shaaban2016literature} are key technologies employed by operators to achieve these objectives.  According to the Federal Highway Administration (FHWA), adaptive signal control technologies can improve travel time by more than 10\% and such improvements can exceed 50\% when they replace outdated controls \citep{asct}.  Decision-making in these systems occurs at a very fine time cadence, e.g., actuated/adaptive traffic controllers update their controls every time a vehicle is detected by one of the sensors in the system.  The control logic employed by adaptive controllers involves coordination among the controllers at different locations (e.g., adjacent intersections or consecutive ramp meters).  Coordination essentially entails one controller informing another to \emph{anticipate} vehicle arrivals, and then producing a coordinated response.  In other words, coordination involves the use of short-term predictions of vehicle arrivals to sensor stations.  In practice, this is done using rudimentary techniques, namely assuming uniform vehicle speeds between sensor locations. But such assumptions are only valid in low-volume traffic and prediction in high-volume traffic is more challenging.

Prediction problems in traffic have focused on applications that do not require the fine temporal resolutions needed for adaptive control. They almost exclusively use data that is aggregated at the 5-15 minute level.  Numerous studies have investigated the impact of data aggregation level on prediction accuracy \citep{ishak2002performance, guo2007data, tan2016short} These studies generally agree that prediction accuracies drop as the aggregation levels get finer. \cite{oh2005exploring} further demonstrated that data-driven techniques like neural networks tend to produce more accurate predictions than conventional statistical models (like time series) as the data aggregation levels get finer (from 10 minutes down to 1 minute). But they argued that finer aggregations are not necessary for the applications they considered. \cite{vlahogianni2011temporal} argued that aggregation may result in bias, even for applications that do not necessarily operate on disaggregated data.  They specifically pointed to non-stationarity, long-run memory, and structural change as features that are lost upon aggregation but that can be of extreme importance, particularly in traffic management.  This is especially the case for traffic signal operations and adaptive control.

The collection and archival of disaggregated traffic data is a growing trend in U.S. cities, but it is also emerging in other parts of the world. The first such system appeared in northern California for highway traffic operations \citep{skabardonis1996880,may2003loop,may2004automatic}. Various traffic operations tools were developed using these data \citep{wu2014using}, two notable examples that require disaggregated data are vehicle re-identification \citep{coifman2002vehicle} and vehicle classification \citep{wang2004dynamic}. Similar systems were subsequently deployed for purposes of monitoring the performance of signalized arterials in real-time, in Texas \citep{balke2005development}, Indiana \citep{smaglik2007event}, and Minnesota \citep{liu2008smart}.	These systems have dubbed the disaggregated sensor data \emph{high-resolution data} or \emph{event-based} data.  These datasets consist of on/off sensor states over time, typically recorded on a second-by-second basis (which is the control cadence).  The datasets also include the status of traffic signal heads (green, orange, red) over time and the signal switch times.  It is notable that these data do not typically correspond to raw sensor states; in the case of inductive loop detectors, the raw sensor states are inductance drops that the controller \emph{interprets} as vehicle presence depending on controller sensitivity settings.  In this paper, we will use the nomenclature ``high-resolution data'' to refer to these types of datasets, pertaining to traffic signal systems.  The literature includes numerous examples of applications that utilize high-resolution data for traffic operations applications, including vehicle classification in interrupted traffic \citep{liu2014length}, detection of over-saturated traffic conditions \citep{wu2010identification}, signal timing optimization \citep{hu2013arterial,hu2013managing}, and estimating red-light running frequencies \citep{chen2017estimation}.

Existing traffic data analysis methods are either model-based or data-driven.  Techniques that are geared towards the estimation of traffic densities or speeds from both fixed and mobile sensors are examples of the former \citep{jabari2012stochastic,jabari2012,jabari2013stochastic,seo2017traffic,zheng2018traffic,jabari2019learning,osorio2014capturing,lu2018probabilistic,osorio2019efficient,ramezani2015dynamics, yildirimoglu2013experienced}.  Data-driven techniques are becoming more popular with the increasing availability of traffic data.  For traffic prediction, data-driven methodologies fall in one of two major categories: parametric approaches and non-parametric approaches.  Time series models dominate the category of parametric techniques. For example, auto-regressive integrated moving average (ARIMA) \citep{kwak2019traffic,kim2001weighted, kumar2015short, williams2003modeling, chen2011short,sun2015stochastic} and vector variants (VARIMA) \citep{stock2001vector,kamarianakis2003forecasting,kamarianakis2005space,kamarianakis2012real,ghosh2009multivariate} have been widely used and demonstrated to be successful.  These models assume a parametric linear relationship between the label and a finite number of past states of the label itself.  Another class of methods combine \emph{partial likelihood inference} with  \emph{generalized linear modeling} (GLM) techniques to generalize both the linear relationship between predictors (past states) and the predictions and the Gaussian noise assumption.  An example of this class of models is the generalized auto-regressive moving average model (GARMA)  \citep{benjamin2003generalized}, but we will refer to them more generally as \emph{time series following generalized linear models}. \cite{fokianos2004partial} demonstrated how the basic ideas of exponential distribution families and monotone link functions used in GLMs generalize directly to the case of time series data.  This readily provides an apparatus for generalizing linear time series to binary time series data (in a manner similar to logistic regression).  To the best of our knowledge, these models have not been applied to traffic prediction problems. We will, nonetheless, test them alongside other techniques in our experiments.

Non-parametric methods, on the other hand, do not assume any functional model forms and are typically data-driven. The basic idea behind non-parametric techniques is that they learn a general form from the historical data and use it to predict future data. Non-parametric methods can be divided into two types: non-parametric regression such as support vector regression (SVR) \citep{wu2004travel,hu2014traffic,jeong2013supervised} and artificial neural networks (ANN) \citep{lv2014traffic,ma2015long,mackenzie2018evaluation,kang2017short,ma2017learning,liu2017short}. Compared with time series models in the traffic prediction literature which typically assume that traffic data vary linearly over time, SVR and ANN techniques can capture nonlinear variations in traffic data. The advantage of SVR models is that they can learn representative features by using various kernels. For this reason, SVR has been successfully applied to predict traffic data such as flow \citep{hu2014traffic, jeong2013supervised}, headway \citep{zhang2013gaussian}, and travel time \citep{wu2004travel,dilip2017sparse,jabari2020sparse}.  \cite{jeong2013supervised} further proposed an online version of SVR, which can efficiently update the model when new data is added. Alternatively, ANNs are among the first non-parametric methods that have been applied to traffic prediction, and there is a wealth of literature on the subject, from simple multilayer perceptrons (MLPs) \citep{lv2014traffic} to more complicated architectures such as recurrent neural networks (RNNs) \citep{ma2015long,mackenzie2018evaluation,kang2017short}, convolutional neural networks (CNNs) \citep{ma2017learning,benkraouda2020traffic,thodi2021incorporating,thodi2021learning}, and even combinations of RNNs and CNNs \citep{liu2017short}. 

To our best of knowledge, short-term prediction using high-resolution data remains an open problem.  As discussed above the accuracy of traditional prediction techniques tend to drop as the temporal aggregation levels get finer.  This paper develops a novel non-parametric traffic prediction method using high-resolution data for signalized traffic operations management applications, e.g., \citep{li2019position,li2020backpressure,li2020decentralized,lin2021pay}.  When using high-resolution data, much larger volumes of data are required in order to observe patterns and produce predictions.  One also improves accuracy with frequent updates, that is, updating the predictions in real-time as opposed to utilizing a parametric model that was fitted off-line.  These two ingredients (large volumes of data and online estimation) are key aspects of the proposed techniques.  The main challenge is, thus, computational/algorithmic in nature and the central theme of this paper is addressing the algorithmic aspects of the problem. To address these challenges, we propose a matrix completion formulation and block coordinate descent method. Our contributions are threefold: First, we present a novel formulation of traffic prediction as a matrix completion problem.  Our formulation extends traditional modeling approaches applied in this context in that the nonlinear dependence of the labels on the inputs can be accommodated with ease. The model can be described as being \emph{transductive} in that we leverage statistical information in the testing data (e.g., correlations in the inputs).  This, in turn, allows for a dynamic implementation that can adapt to streaming high-resolution data. We consider (and leverage), for the first time, the natural sparsity and ``bigness'' of high-resolution traffic data, in a way that is intrinsic to our formulation.  Second, we develop a block coordinate descent technique to solve the matrix completion problem and analytically demonstrate  that the algorithm converges to a block coordinate-wise minimizer (or Nash point).  We also demonstrate analytically that the convergence rate is sub-linear, meaning that each iteration (which only involves a series of algebraic manipulations) produces an order of magnitude reduction in the distance from optimality.  This means that only few iterations are required to solve the problem, allowing for real-time implementation.  Third, the performance of the model is further boosted by forming an ensemble of matrix completion problems using datasets from past days.  The merits of the proposed ensemble learning approach are twofold: training errors can be reduced to arbitrarily small numbers while capturing and exploiting trends in the data from past days.  This also offers interpretability that is usually sacrificed with most large-scale machine learning tools (e.g., deep neural nets).  

The remainder of this paper is organized as follows: Sec.~\ref{S:formulation} formulates the traffic prediction problem as a matrix completion problem.  The block-coordinate descent algorithm is presented in Sec.~\ref{S:BCD} along with all convergence results. Sec.~\ref{S:ensemble} develops the ensemble learning extension, presents an analysis of the training error, and brief analyses of generalization and sample complexity followed by the time complexity of the overall approach.  Sec.~\ref{S:experiments} presents three sets of numerical experiments, which include both simulated and real-world data and Sec.~\ref{S:conc} concludes the paper.

\section{Traffic Prediction as a Matrix Completion Problem: Problem Formulation} \label{S:formulation}
\subsection{Notation and Preliminaries}
Let $\mathbf{x}(t)\in \{0,1\}^n$ represent a set of $n$ network detector states at time $t \in \mathbb{Z}_+$; an element of the vector $\mathbf{x}(t)$ is 0 if the corresponding detector is unoccupied at time $t$ and is equal to 1, otherwise.  We consider as input at time step $t$ the present detector state, and previous states up to a lag of size $L$.  The parameter $L$ depends on temporal correlations in the data, and has been comprehensively discussed in previous work, e.g., \citep{broersen1997abc,wang2002autoregressive}. It plays a similar role to that of the \emph{order} of a vector autoregressive model, where the standard approach involves testing different orders and using \emph{information criteria} (IC) (such as the Akaike Information Criterion, or AIC) to select an ``optimal'' order.  The selected order is the one with the lowest IC score.  IC scores generally consist of two terms, one that represents a training error that decreases with increasing model order, and a penalty term that increases with model order.  It is well known that there does not exist an optimal selection rule that is applicable to all problems \citep{broersen1997abc}.  For systems that exhibit periodic patterns, such as ours (due to traffic lights), the Nyquist-Shannon Theorem suggests a sampling rate that is twice the highest frequency in the signal. The periodicity in the sensor states does not necessarily correspond to the cycle lengths of the traffic signals as movements can be active in different traffic signal phases and due to the varying durations of the phases in adaptive traffic control systems.  In our experiments, we test the predictive accuracy of our approach using different values of $L$ and select the lag $L$ with the best prediction performance.

We represent the input at time $t$ by applying a backshift operator up to order $L$, $\BB_L: \RR^n \rightarrow \RR^{nL}$, which is given by
\begin{equation}
	\BB_L\mathbf{x}(t) = \begin{bmatrix}
		\mathbf{x}(t-L+1) \\ \vdots \\ \mathbf{x}(t)
	\end{bmatrix}. \label{eq_backshift}
\end{equation}
The output of the prediction, performed at time $t$, which we denote by $\mathbf{y}(t) \in\RR^n$, is simply the state of the $n$ network detectors at some prediction horizon $H \in \mathbb{Z}_+$ time steps later, that is $\mathbf{y}(t) = \mathbf{x}(t + H)$. Let $T_{\tr}$ denote the number of training samples. 
The inputs in the training sample cover the interval $t \in \{1,\hdots,T_{\tr}\}$, which we denote by $\{\BB_L\mathbf{x}_{\tr}(t), \mathbf{y}_{\tr}(t)\}_{t=1}^{T_{\tr}}$. As in \eqref{eq_backshift} 
\begin{equation}
	\BB_L\mathbf{x}_{\tr}(t) = \begin{bmatrix}
		\mathbf{x}_{\tr}(t-L+1) \\ \vdots \\ \mathbf{x}_{\tr}(t)
	\end{bmatrix}
\end{equation}
is the input associated with the sample at time $t$ and the corresponding output vector is $\mathbf{y}_{\tr}(t) = \mathbf{x}_{\tr}(t+H) \in\RR^n$. The inputs and outputs at a single time step $t$ are illustrated in Fig. \ref{F:predictionInOut}
\begin{figure}[h!]
	\centering
	\scriptsize
	\tikzstyle{format}=[rectangle,draw,thin,fill=white,font=\small,minimum width=.5cm,minimum height=.5cm]
	\tikzstyle{test}=[diamond,aspect=2,draw,thin,text width=4cm]
	\tikzstyle{point}=[coordinate,on grid,]
	\begin{tikzpicture}[node distance=0.35 cm,
		auto,>=latex',
		thin,
		start chain=going below,
		every join/.style={norm},]
		\coordinate(Z) at(0,0);
		\node[format,xshift=5cm](n0){1};
		\node[format,right of=n0,xshift=.2cm](n1){0};
		\node[format,right of=n1,xshift=.2cm](n2){1};
		\node[format,right of =n2,xshift=.2cm](n3){1};
		\node[format,right of =n3,xshift=.2cm](n4){0};
		\node[format,right of =n4,xshift=.2cm](n5){0};
		\node[format,right of =n5,xshift=.2cm](n6){0};
		\node[format,right of =n6,xshift=.2cm](n7){1};
		\node[format,right of =n7,xshift=0.8cm](n8){1};
		\coordinate[label= right:] (A) at($(n8)+(0.1,0)$);
		\node[right of=n8,xshift=1.cm](n9){\small sensor 1};
		\coordinate[label =  {\small Input $\BB_L\mathbf{x}_{\tr}(t)$}](B) at($(n4)+(0, 0.3)$);
		\coordinate[label = {\small Output $\mathbf{y}_{\tr}(t)$}](C) at($(A)+(0, 0.3)$);
		\node[format,below of=n0,yshift=-.2 cm](n10){0};
		\node[format,right of=n10,xshift=.2cm](n11){1};
		\node[format,right of=n11,xshift=.2cm](n12){0};
		\node[format,right of =n12,xshift=.2cm](n13){1};
		\node[format,right of =n13,xshift=.2cm](n14){0};
		\node[format,right of =n14,xshift=.2cm](n15){0};
		\node[format,right of =n15,xshift=.2cm](n16){1};
		\node[format,right of =n16,xshift=.2cm](n17){0};
		\node[format,below of=n8,yshift=-.2 cm](n18){0};
		\coordinate[label= right:] (D) at($(n18)+(0.1,0)$);
		\node[below of=n9,yshift=-.2 cm](n19){\small sensor 2};
		\node[format,below of=n10,yshift=-.2 cm](n20){1};
		\node[format,right of=n20,xshift=.2cm](n21){0};
		\node[format,right of=n21,xshift=.2cm](n22){1};
		\node[format,right of =n22,xshift=.2cm](n23){0};
		\node[format,right of =n23,xshift=.2cm](n24){1};
		\node[format,right of =n24,xshift=.2cm](n25){0};
		\node[format,right of =n25,xshift=.2cm](n26){0};
		\node[format,right of =n26,xshift=.2cm](n27){0};
		\node[format,below of=n18,yshift=-.2 cm](n28){0};
		\coordinate[label= right:] (E) at($(n28)+(0.1,0)$);
		\node[below of=n19,yshift=-.2 cm](n29){\small sensor 3};
		\coordinate[label= below: $\vdots$,yshift=-.1 cm] (F) at($(n24)+(-0.2,0)$);
		\coordinate[label= below: $\vdots$,yshift=-.1 cm] (I) at($(E)+(-0.12,0)$);
		\coordinate[label= below: $\vdots$,yshift=-.1 cm] (I) at($(E)+(1.2,0)$);      
		\node[format,xshift=5cm,yshift=-2.2cm](n30){1};
		\node[format,right of=n30,xshift=.2cm](n31){1};
		\node[format,right of=n31,xshift=.2cm](n32){0};
		\node[format,right of =n32,xshift=.2cm](n33){0};
		\node[format,right of =n33,xshift=.2cm](n34){0};
		\node[format,right of =n34,xshift=.2cm](n35){1};
		\node[format,right of =n35,xshift=.2cm](n36){0};
		\node[format,right of =n36,xshift=.2cm](n37){1};
		\node[format,right of =n37,xshift=.8cm](n38){0};
		\coordinate[label= right: ] (E) at($(n38)+(0.1,0)$);
		\node[below of=n29,yshift=-0.8cm](n39){\small sensor $n$};
		\draw [thick,decorate,decoration={brace,amplitude=10pt,mirror},below of=n34,yshift=-.5cm] (4.6,-1.7) -- (9.2,-1.7) node[black,midway,yshift=-0.6cm] {\small Lag $L$};
	\end{tikzpicture}
	\caption{An Illustration of Training Inputs and Outputs at a Single Time Step}	
	\label{F:predictionInOut}	
\end{figure}
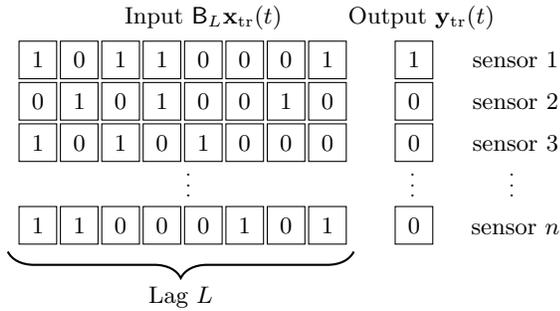

\subsection{Traffic Prediction and Matrix Rank Minimization}
Assuming a linear relationship between the input and output (which will be relaxed below), we may write
\begin{equation}
	\widehat{\mathbf{y}}_{\tr}(t)=\langle \WW,\BB_L\mathbf{x}_{\tr}(t)  \rangle = \WW^{\top}\BB_L\mathbf{x}_{\tr}(t), \label{eq_tr1}
\end{equation}
where $\WW \in \RR^{n \times nL}$ is the (regression) coefficients matrix, $\widehat{\mathbf{y}}_{\tr}(t)$ is an estimate of $\mathbf{y}_{\tr}(t)$, and $\langle \cdot,\cdot \rangle$ is the inner product. Let $T_{\te}$ be the number of testing samples. We denote the set of testing data over the prediction interval $t \in \{T_{\tr}+1,\hdots,T_{\tr} + T_{\te}\}$ by $\{\BB_L \mathbf{x}_{\te}(t), \mathbf{y}_{\te}(t)\}_{t=T_{\tr}+1}^{T_{\tr}+T_{\te}}$ and write their (linear) relationship as
\begin{equation}
	\widehat{\mathbf{y}}_{\te}(t)=\langle \WW,\BB_L\mathbf{x}_{\te}(t) \rangle = \WW^{\top}\BB_L\mathbf{x}_{\te}(t). \label{eq_te1}
\end{equation}

To set the prediction problem up as a matrix completion problem, we first define the input data matrices
\begin{equation}
	\XX_{\tr} \equiv \begin{bmatrix} \BB_L\mathbf{x}_{\tr}(1) & \cdots & \BB_L \mathbf{x}_{\tr}(T_{\tr}) \end{bmatrix} \in \RR^{nL \times T_{\tr}}
\end{equation}
and
\begin{multline}
	\XX_{\te} \equiv \begin{bmatrix} \BB_L\mathbf{x}_{\te}(T_{\tr}+1) & \cdots & \BB_L \mathbf{x}_{\te}(T_{\tr}+T_{\te}) \end{bmatrix} \\ \in \RR^{nL \times T_{\te}}.
\end{multline}
Their corresponding output matrices are defined as
\begin{equation}
	\YY_{\tr} \equiv \begin{bmatrix} \widehat{\mathbf{y}}_{\tr}(1) & \cdots & \widehat{\mathbf{y}}_{\tr}(T_{\tr}) \end{bmatrix} \in \RR^{n \times T_{\tr}}	
\end{equation}
and 
\begin{equation}
	\YY_{\te} \equiv \begin{bmatrix} \widehat{\mathbf{y}}_{\te}(T_{\tr}+1) & \cdots & \widehat{\mathbf{y}}_{\te}(T_{\tr}+T_{\te}) \end{bmatrix} \in \RR^{n \times T_{\te}}.
\end{equation}
We then define the \emph{joint matrix}, which concatenates both the training and the testing data, as
\begin{equation}
	\ZZ \equiv  \begin{bmatrix}
		\YY_{\tr}&\YY_{\te} \\ 
		\XX_{\tr}&\XX_{\te} 
	\end{bmatrix}\in\RR^{(n+nL)\times (T_{\tr}+T_{\te})}. \label{eq_Z1}
\end{equation}
The entries corresponding to $\YY_{\te}$ in $\ZZ$ are unknown and constitute the solution of the prediction problem.  Hence the prediction problem can be cast as a matrix completion problem.  
From \eqref{eq_tr1} and \eqref{eq_te1}, we have that
\begin{equation}
	\begin{bmatrix}
		\YY_{\tr}&\YY_{\te} 
	\end{bmatrix}=\langle \WW, \begin{bmatrix}
		\XX_{\tr}&\XX_{\te}
	\end{bmatrix} \rangle, \label{eq_rel1}
\end{equation}
which implies that the joint matrix $\ZZ$ as defined in \eqref{eq_Z1} has low rank as a result of linear dependencies in the rows of $\ZZ$ implied by \eqref{eq_rel1}.  Thus, removing entries corresponding to $\YY_{\te}$ and completion of the joint matrix $\ZZ$ via rank-minimization techniques is equivalent to directly producing predictions (and bypassing the need to estimate $\WW$). The higher the linear dependence in the matrix $\ZZ$ (represented by its rank), the more parsimonious the resulting prediction, which is a desirable feature from a statistical point of view.

\subsection{Relaxing the Linearity Using Kernels}
We relax the linearity assumption by mapping the inputs to a higher dimensional feature space, via kernel ``basis'' functions, and solve a linear prediction problem in the higher dimensional space \citep{suykens1999least}. In the training stage, the nonlinear relationship can be expressed as
\begin{equation}
	\widehat{\mathbf{y}}_{\tr}(t)=\big\langle \WW,\phi\big(\BB_L\mathbf{x}_{\tr}(t)\big)  \big\rangle = \WW^{\top}\phi\big(\BB_L\mathbf{x}_{\tr}(t)\big), \label{eq_tr2}
\end{equation}
where $\phi: \RR^{nL} \rightarrow \RR^h$ is the nonlinear function that maps the input space to the high-dimensional feature space ($h > nL$) and with slight notation abuse $\WW \in \RR^{n \times h}$ is the regression matrix. Note that the inner product of $\WW$ and $\phi(\mathbf{x})$ can be learned using the \emph{kernel trick} avoiding the need to compute the map $\phi$; see for example \citep{smola2004tutorial,li2017linearity}.  Mapping functions are related to kernels via 
\begin{equation}
	\KK(\mathbf{x}_1, \mathbf{x}_2) = \langle \phi(\mathbf{x}_1), \phi(\mathbf{x}_2) \rangle = \phi(\mathbf{x}_1)^{\top} \phi(\mathbf{x}_2),
\end{equation}
where $\KK$ is a kernel (a weighted distance measure).  We note that not all known kernel functions have known mapping functions (e.g., the widely used Gaussian kernels).  We shall prescribe a kernel and treat the mapping function as an unknown.  Specifically, we shall adopt a radial basis function with periodical patterns (RBFP), which appends a component that captures potential periodic patterns to traditional radial basis functions \citep{lippi2013short}:
\begin{multline}
	\KK_P(\mathbf{x}_1(t_1),\mathbf{x}_2(t_2)) \\= \exp\big(-\gamma\|\BB_L\mathbf{x}_1(t_1)-\BB_L\mathbf{x}_2(t_2)\|_2^2 - \gamma_{\mathrm{p}} d_P(t_1,t_2)^2\big),
	\label{eq_rbfp}
\end{multline}
where $P$ is the period in the data (e.g., one signal cycle), $\gamma$ and $\gamma_{\mathrm{p}}$ are weights associated with the radial basis function and the periodicity, respectively, and
\begin{equation}
	d_P(t_1,t_2) \equiv \min \big\{ |(t_1 - t_2) ~ \mathsf{mod} ~P|, P - |(t_1 - t_2) ~ \mathsf{mod} ~ P| \big\}
\end{equation}
is a temporal distance (modulo periodicity). The testing samples are related, similar to the training samples, as follows:
\begin{equation}
	\widehat{\mathbf{y}}_{\te}(t)=\big\langle \WW,\phi\big(\BB_L\mathbf{x}_{\te}(t) \big)  \big\rangle = \WW^{\top}\phi\big(\BB_L\mathbf{x}_{\te}(t) \big), \label{eq_te2}
\end{equation}
and the \emph{joint matrix} in this kernelized setting is defined as:
\begin{equation}
	\ZZ\equiv  \begin{bmatrix}
		\YY_{\tr}&\YY_{\te} \\ 
		\Phi(\XX_{\tr})&\Phi(\XX_{\te}) 
	\end{bmatrix}\in\RR^{(n+h)\times (T_{\tr}+T_{\te})},
	\label{eq_Z2}
\end{equation}
where $\Phi: \RR^{nL \times t} \rightarrow \RR^{h \times t}$ simply applies $\phi$ to each column of its argument.  That is
\begin{equation}
	\Phi(\XX_{\tr}) \equiv \begin{bmatrix}
		\phi\big(\BB_L \mathbf{x}_{\tr}(1)\big) & \cdots & \phi\big(\BB_L \mathbf{x}_{\tr}(T_{\tr})\big)
	\end{bmatrix}
\end{equation}
and $\Phi(\XX_{\te})$ is defined in a similar way. To further simplify notation, we define the matrices $\Phi_{\tr} \equiv \Phi(\XX_{\tr}) \in \RR^{h \times T_{\tr}}$ and $\Phi_{\te} \equiv \Phi(\XX_{\te})  \in \RR^{h \times T_{\te}}$.  The inner products of these matrices define the kernels that we will use below.  For example we define the kernel $\mathbf{K}_{\tr,\te}$ as:
\begin{widetext}
\begin{equation}
	\mathbf{K}_{\tr,\te} \equiv \langle \Phi(\XX_{\tr}), \Phi(\XX_{\te}) \rangle 
	= \begin{bmatrix} \KK_P\big(\mathbf{x}_{\tr}(1),\mathbf{x}_{\te}(T_{\tr}) \big) & \cdots & \KK_P\big(\mathbf{x}_{\tr}(1),\mathbf{x}_{\te}(T_{\tr}+T_{\te}) \big) \\ \vdots & \ddots & \vdots \\ \KK_P\big(\mathbf{x}_{\tr}(T_{\tr}),\mathbf{x}_{\te}(T_{\tr}) \big) & \cdots & \KK_P\big(\mathbf{x}_{\tr}(T_{\tr}),\mathbf{x}_{\te}(T_{\tr}+T_{\te}) \big) \end{bmatrix}. 
	\label{eq_kernel}
\end{equation}
\end{widetext}

From \eqref{eq_tr2} and \eqref{eq_te2}, we have that
\begin{equation}
	\begin{bmatrix}
		\YY_{\tr}&\YY_{\te} 
	\end{bmatrix}= \langle \WW, \begin{bmatrix}
		\phi(\XX_{\tr})&\phi(\XX_{\te})
	\end{bmatrix} \rangle, \label{eq_rel2}
\end{equation}
which (again) implies that $\ZZ$ is a low-rank matrix and that the prediction problem can be formulated as a low-rank matrix completion problem.

\subsection{The Matrix Completion Problem}
Our matrix completion problem seeks to find an estimate of the matrix $\ZZ$ in \eqref{eq_Z2}, denoted $\widehat{\ZZ}$.  We seek to match $\ZZ$ on the kernelized training and testing inputs $\Phi(\XX_{\tr})$ and $\Phi(\XX_{\te})$, and the training outputs $\YY_{\te}$.  There are many ways to accomplish this; we will invoke the \emph{principle of parsimony}.  This results in a simple solution, one that is easy to interpret and extract insights from.  To this end, our matrix completion problem seeks to find a low-rank approximation $\widehat{\ZZ}$ of the matrix $\ZZ$.  Let $\PP_{\Omega}: \RR^{p \times q} \rightarrow \RR^{p \times q}$ be a binary mask operator:
\begin{equation}
	\big[\PP_{\Omega}(\MM) \big]_{ij} = \mathbbm{1}{\{(i,j) \in \Omega\}} \MM_{ij},
	\label{eq_BinMask}
\end{equation}
where $\mathbbm{1}{\{(i,j) \in \Omega\}}$ is the indicator function, taking the value 1 if the condition $(i,j) \in \Omega$ is true and 0 otherwise. 
We utilize the binary mask to exclude the output testing matrix from $\ZZ$.  That is, we define $\Omega$ as the set of indices corresponding to non-testing outputs, that is
\begin{multline}
	\Omega \equiv \{(i,j): 1 \le i \le n+h, 1 \le j \le T_{\tr}\} \\ \cup \{(i,j) : n+1 \le i \le n+h, T_{\tr}+1 \le j \le T_{\tr}+T_{\te}\}. \label{eq_Omega}
\end{multline}
Consequently,
\begin{equation}
	\PP_{\Omega}(\ZZ) = \begin{bmatrix}
		\YY_{\tr}&\mathbf{0} \\ 
		\Phi(\XX_{\tr})&\Phi(\XX_{\te}) 
	\end{bmatrix}.
	\label{eq_P_omega}
\end{equation}
The matrix completion problem is then formulated as a rank minimization problem:
\begin{equation}
	\widehat{\ZZ} \equiv \underset{\MM \in\RR^{(n+h)\times (T_{\tr}+T_{\te})}}{\arg \min} \big\{ \rank(\MM):  \PP_{\Omega}(\MM - \ZZ)=\mathbf{0} \big\}.
	\label{eq_RMP}
\end{equation}
The solution $\widehat{\ZZ}$ is interpreted as the lowest-rank matrix that matches $\ZZ$ exactly (via the constraint) in the entries corresponding to the training data, $\YY_{\tr}$ and $\Phi(\XX_{\tr})$, and the input testing data, $\Phi(\XX_{\te})$.  
Rank minimization \eqref{eq_RMP} is generally NP-hard but it was demonstrated in \citep{candes2009exact} that \eqref{eq_RMP} can be solved exactly if certain sparsity conditions on $\ZZ$ hold by using convex optimization techniques (via convex relaxation of the rank objective as shown below).  The sparsity condition required is that $\ZZ$ be \emph{sufficiently incoherent}, which is achieved when the number of observed entries in the matrix, $|\Omega|$ is $O(N^{1.2} \rank(\ZZ) \log N)$, where $N \equiv \max \{n+h, T_{\tr} + T_{\te}\}$ in our context.  This ensures faithful reconstruction.  In our context, increasing the number of observations $|\Omega|$ can be accomplished by using longer lags and more training/testing data or more sensors.  A more useful application of this bound is to produce upper bound estimates of the rank of $\widehat{\ZZ}$, which will be needed to set up the problem \eqref{eq_RMCP_SVD_0} below. Henceforth, we will focus on solving a convex relaxation of the problem and refer readers to \citep{candes2009exact,candes2009power} for further information on these bounds.

The main difficulty lies in minimizing the matrix rank.  This can be observed by noting that $\rank(\MM) = \|\sigma(\MM)\|_0$, where $\sigma(\MM)$ is a vector of the singular values of $\MM$ and $\|\cdot\|_0$ is the $\ell_0$ pseudo-norm which counts the number of non-zero elements of its argument.  We relax the objective function via a convex surrogate using the nuclear norm $\|\MM\|_* = \trace(\sqrt{\MM^{\top} \MM}) = \|\sigma(\MM)\|_1$.  (The $\ell_1$ norm in this context is simply the sum of the singular values, since the singular values of a matrix are always non-negative.)  This yields the relaxed problem:
\begin{equation}
	\widehat{\ZZ} \equiv \underset{\MM \in\RR^{(n+h)\times (T_{\tr}+T_{\te})}}{\arg \min} \big\{ \|\MM\|_*:  \PP_{\Omega}(\MM - \ZZ) = \mathbf{0} \big\},
	\label{eq_NNMP}
\end{equation}
which is equivalent to the quadratic optimization problem \cite[Lemma 5.1]{recht2010guaranteed}
\begin{multline}
	\{\widehat{\UU},\widehat{\VV}\} \\ \equiv \underset{\substack{\UU\in\RR^{(n+h)\times r}, \\ \VV\in\RR^{(T_{\tr}+T_{\te})\times r}}}{\arg \min} \big\{ \|\UU\|_{\FF}^2 + \|\VV\|_{\FF}^2: \PP_{\Omega}(\UU\VV^{\top} - \ZZ) = \mathbf{0}  \big\},
	\label{eq_RMCP_SVD_0}
\end{multline}
where $\|\cdot\|_{\FF}$ is the Frobenius norm and $r$ is chosen so that $r \ge \rank(\ZZ)$.  The latter formulation utilizes a bi-linear representation $\MM = \UU\VV^\top$ of the joint matrix and can be solved efficiently using alternating minimization techniques \citep{jain2013low}, an instance of which we shall develop in the next section. The latter formulation \eqref{eq_RMCP_SVD_0} can be expressed as an unconstrained optimization problem via the Lagrangian:
\begin{multline}
	\{\widehat{\UU},\widehat{\VV}\} \\ \equiv \underset{\substack{\UU\in\RR^{(n+h)\times r}, \\ \VV\in\RR^{(T_{\tr}+T_{\te})\times r}}}{\arg \min}  \| \PP_{\Omega}(\UU\VV^{\top} - \ZZ) \|_{\FF}^2  + \mu (\|\UU\|_{\FF}^2 + \|\VV\|_{\FF}^2),
	\label{eq_RMCP_SVD}
\end{multline}
where $\mu > 0$ is (the inverse of) a Lagrange multiplier. Here, we treat $\mu$ as a penalty parameter with the aim of mitigating over-fitting (similar to ridge regression).  There are many techniques that can be employed for the selection of $\mu$; see, e.g., \citep{van2015lecture}.

Another simplification can be achieved by dividing each of the two matrices $\UU$ and $\VV$ into two blocks, a training block and a testing block:
\begin{equation}
	\UU=\begin{bmatrix}
		\UU_{\tr}\\ \UU_{\te}
	\end{bmatrix} \mbox{ and } \VV=\begin{bmatrix}
		\VV_{\tr}\\ \VV_{\te}
	\end{bmatrix},
\end{equation}
where $\UU_{\tr}\in\RR^{n\times r}$, $\VV_{\tr}\in\RR^{T_{\tr}\times r}$, $\UU_{\te}\in\RR^{h\times r}$, and $\VV_{\te}\in\RR^{T_{\te}\times r}$
This decomposition allows us to bypass use of the binary mask $\PP_{\Omega}$, simplifying the formulation further.  We obtain the following optimization problem:
\begin{multline}
	\{\widehat{\UU}_{\tr}, \widehat{\UU}_{\te}, \widehat{\VV}_{\tr}, \widehat{\VV}_{\te}\}  \equiv \underset{\UU_{\tr},\UU_{\te},\VV_{\tr},\VV_{\te}}{\arg \min} \|\UU_{\tr} \VV_{\tr}^{\top} - \YY_{\tr}\|^2_{\FF} \\
	+\|\UU_{\te} \VV_{\tr}^{\top} - \Phi_{\tr} \|^2_{\FF}  + \| \UU_{\te} \VV_{\te}^{\top} -  \Phi_{\te}\|^2_{\FF} \\
	+ \mu(\|\UU_{\tr}\|^2_{\FF} +  \|\UU_{\te}\|^2_{\FF} + \|\VV_{\tr}\|^2_{\FF} + \|\VV_{\te}\|^2_{\FF}).
	\label{eq_KMCP}
\end{multline}
Again, we do not prescribe the mapping functions, but shall prescribe kernels.  For this purpose, we define $\mathbf{K}_{\tr,\tr} \equiv \langle \Phi_{\tr}, \Phi_{\tr} \rangle = \Phi_{\tr}^{\top} \Phi_{\tr}$, $\mathbf{K}_{\tr,\te} \equiv \langle \Phi_{\tr}, \Phi_{\te} \rangle = \Phi_{\tr}^{\top} \Phi_{\te}$, $\mathbf{K}_{\te,\tr} \equiv \langle \Phi_{\te}, \Phi_{\tr} \rangle = \Phi_{\te}^{\top} \Phi_{\tr}$, and $\mathbf{K}_{\te,\te} \equiv \langle \Phi_{\te}, \Phi_{\te} \rangle = \Phi_{\te}^{\top} \Phi_{\te}$.  

To summarize, the key features of the proposed formulation \eqref{eq_KMCP}, as they relate to traffic prediction, are:
\begin{enumerate}
	\item The formulation inherits all the merits of conventional \textit{vector} time series prediction methods, namely, vector auto-regressive models (VARs) \citep{stock2001vector,ghosh2009multivariate}.  Specifically, our approach is capable of capturing and leveraging spatio-temporal dependencies. 
	\item The use of kernels allows us to capture non-linearities (not possible with the VAR models in the literature), which one expects to see in traffic flow patterns, particularly in high-resolution traffic data.
	\item The low rank approximation allows for insights to be easily extracted from the solution. The predictions produced are $\widehat{\YY}_{\te} = \widehat{\UU}_{\tr}\widehat{\VV}_{\te}^{\top}$, where the $n$ rows of $\widehat{\UU}_{\tr}$ capture the relationship between sensors (space) and $r$ latent features, while the $T_{\te}$ rows of $\widehat{\VV}_{\te}$ capture the relationship between the prediction time steps and the $r$ latent features. One can mine these matrices to find spatio-temporal correlations between different sensor and different times steps; see, e.g., \citep{brun2014can}.
\end{enumerate}

Mathematically, the matrix completion problem \eqref{eq_KMCP} is not (in general) convex in all of its block coordinates $\{\UU_{\tr}, \UU_{\te}, \VV_{\tr}, \VV_{\te}\}$ simultaneously but it is convex in each of the blocks separately. (For intuition, readers may consider --as an exercise-- convexity of the function $f(x,y) = (xy-1)^2$).  Such problems are referred to as \emph{block multi-convex} \citep{xu2013block}.  Let  $F(\UU_{\tr},\UU_{\te},\VV_{\tr},\VV_{\te})$ denote the objective function in \eqref{eq_KMCP}:
\begin{multline}
	F(\UU_{\tr},\UU_{\te},\VV_{\tr},\VV_{\te}) \\ \equiv \|\UU_{\tr} \VV_{\tr}^{\top} - \YY_{\tr}\|^2_{\FF} +\|\UU_{\te} \VV_{\tr}^{\top} - \Phi_{\tr} \|^2_{\FF}  + \| \UU_{\te} \VV_{\te}^{\top} -  \Phi_{\te}\|^2_{\FF}
	\\ + \mu(\|\UU_{\tr}\|^2_{\FF} +  \|\UU_{\te}\|^2_{\FF} + \|\VV_{\tr}\|^2_{\FF} + \|\VV_{\te}\|^2_{\FF}).
	\label{eq_objF}
\end{multline} 
We seek a solution $\{\overline{\UU}_{\tr}, \overline{\UU}_{\te}, \overline{\VV}_{\tr}, \overline{\VV}_{\te}\}$ for which the following variational inequalities hold for all $\UU_{\tr} \in \RR^{n \times r}$, all $\UU_{\te} \in \RR^{h \times r}$, all $\VV_{\tr} \in \RR^{T_{\tr} \times r}$, and all $\VV_{\te} \in \RR^{T_{\te} \times r}$:
\begin{equation}
	\big\langle \partial_{\UU_{\tr}}F(\overline{\UU}_{\tr}, \overline{\UU}_{\te},\overline{\VV}_{\tr},\overline{\VV}_{\te}), \UU_{\tr} - \overline{\UU}_{\tr} \big\rangle \ge 0, \label{eq_VI1}
\end{equation}
\begin{equation}
	\big\langle \partial_{\UU_{\te}}F(\overline{\UU}_{\tr}, \overline{\UU}_{\te},\overline{\VV}_{\tr},\overline{\VV}_{\te}), \UU_{\te} - \overline{\UU}_{\te} \big\rangle \ge 0, \label{eq_VI2}
\end{equation}
\begin{equation}
	\big\langle \partial_{\VV_{\tr}}F(\overline{\UU}_{\tr}, \overline{\UU}_{\te},\overline{\VV}_{\tr},\overline{\VV}_{\te}), \VV_{\tr} - \overline{\VV}_{\tr} \big\rangle \ge 0, \label{eq_VI3}
\end{equation}
and
\begin{equation}
	\big\langle \partial_{\VV_{\te}}F(\overline{\UU}_{\tr}, \overline{\UU}_{\te},\overline{\VV}_{\tr},\overline{\VV}_{\te}), \VV_{\te} - \overline{\VV}_{\te} \big\rangle \ge 0, \label{eq_VI4}
\end{equation}
where $\partial_{\UU}$ denotes the partial derivative operator with respect to the matrix $\UU$.  This type of solution is referred to as a \emph{block coordinate-wise minimizer} or a \emph{Nash point} in the sense that one cannot further minimize $F$ by changing any of the four block coordinates separately.

\section{Block Coordinate Descent Algorithm} \label{S:BCD}
\subsection{Block Coordinate Descent and Thresholding}
The proposed block coordinate descent algorithm first updates the two $\UU$ blocks, $\UU_{\tr}$ and $\UU_{\te}$, and then updates the two $\VV$ blocks, $\VV_{\tr}$ and $\VV_{\te}$.  The objective functions of the four sub-problems in iteration $k$ are stated as
\begin{equation}
	F_1^{[k]}(\UU_{\tr}) \equiv \|\UU_{\tr} \widehat{\VV}_{\tr}^{[k-1]\top} - \YY_{\tr}\|^2_{\FF} + 2\mu \| \UU_{\tr} \|_{\FF}^2,
	\label{eq_objF1}
\end{equation}
\begin{multline}
	F_2^{[k]}(\UU_{\te}) \equiv \|\UU_{\te} \widehat{\VV}_{\tr}^{[k-1]\top} - \Phi_{\tr}\|^2_{\FF} + \|\UU_{\te} \widehat{\VV}_{\te}^{[k-1]\top} - \Phi_{\te}\|^2_{\FF} \\ + 2\mu \| \UU_{\te} \|_{\FF}^2,
	\label{eq_objF2}
\end{multline}
\begin{multline}
	F_3^{[k]}(\VV_{\tr}) \equiv \|\widehat{\UU}_{\tr}^{[k]} \VV_{\tr}^{\top} - \YY_{\tr}\|^2_{\FF} + \|\widehat{\UU}_{\te}^{[k]} \VV_{\tr}^{\top} - \Phi_{\tr}\|^2_{\FF} \\+ 2\mu \| \VV_{\tr} \|_{\FF}^2,
	\label{eq_objF3}
\end{multline}
and
\begin{equation}
	F_4^{[k]}(\VV_{\te}) \equiv \|\widehat{\UU}_{\te}^{[k]} \VV_{\te}^{\top} - \Phi_{\te}\|^2_{\FF} + 2\mu \| \VV_{\te} \|_{\FF}^2.
	\label{eq_objF4}
\end{equation}
The factors of two in the regularizers are unnecessary but we use them here to reduce clutter later on.  The updates are obtained in closed form; the updated $\UU$ blocks are given by:
\begin{multline}
	\widehat{\UU}_{\tr}^{[k]} = \underset{ \UU_{\tr} \in \RR^{n \times r}}{\arg \min} ~ F_1^{[k]}(\UU_{\tr}) \\= \YY_{\tr}\widehat{\VV}_{\tr}^{[k-1]} \big( \widehat{\VV}_{\tr}^{[k-1]\top} \widehat{\VV}_{\tr}^{[k-1]} +2\mu\mathbf{I} \big)^{-1} \label{eq_updateUtr}
\end{multline}
and 
\begin{multline}
	\widehat{\UU}_{\te}^{[k]} = \underset{ \UU_{\te} \in \RR^{h \times r} }{\arg \min} ~ F_2^{[k]}(\UU_{\te}) 
	 = (\Phi_{\tr} \widehat{\VV}_{\tr}^{[k-1]} + \Phi_{\te} \widehat{\VV}_{\te}^{[k-1]}) \\
	\cdot (\widehat{\VV}_{\tr}^{[k-1]\top} \widehat{\VV}_{\tr}^{[k-1]} + \widehat{\VV}_{\te}^{[k-1]\top} \widehat{\VV}_{\te}^{[k-1]} + 2\mu\mathbf{I})^{-1}. \label{eq_updateUte}
\end{multline}
The updated $\VV$ blocks are then given by:
\begin{multline}
	\widehat{\VV}_{\tr}^{[k]} = \underset{ \VV_{\tr} \in \RR^{T_{\tr} \times r}}{\arg \min} ~ F_3^{[k]}(\VV_{\tr}) = (\YY_{\tr}^{\top} \widehat{\UU}_{\tr}^{[k]} + \Phi_{\tr}^{\top} \widehat{\UU}_{\te}^{[k]}) \\ \cdot (\widehat{\UU}_{\te}^{[k]\top} \widehat{\UU}_{\te}^{[k]} + \widehat{\UU}_{\tr}^{[k]\top} \widehat{\UU}_{\tr}^{[k]} + 2\mu\mathbf{I})^{-1}
	\label{eq_updateVtr}
\end{multline}
and
\begin{equation}
	\widehat{\VV}_{\te}^{[k]} = \underset{ \VV_{\te} \in \RR^{T_{\te} \times r}}{\arg \min} ~ F_4^{[k]}(\VV_{\te}) = \Phi_{\te}^{\top} \widehat{\UU}_{\te}^{[k]} (\widehat{\UU}_{\te}^{[k]\top} \widehat{\UU}_{\te}^{[k]} + 2\mu\mathbf{I})^{-1}.
	\label{eq_updateVte}
\end{equation}
Since $\Phi_{\tr}$ and $\Phi_{\te}$ are unknown, $\widehat{\UU}_{\te}^{[k]}$ cannot be calculated explicitly.  However, this calculation is not required to produce a prediction: $\widehat{\YY}_{\te} \equiv \widehat{\UU}_{\tr} \widehat{\VV}_{\te}^{\top}$.  To produce this estimate, in each iteration $k$ we need to be able to calculate $\widehat{\UU}_{\tr}^{[k]}$ and $\widehat{\VV}_{\te}^{[k]\top}$.  To calculate these quantities, we need (i) $\widehat{\VV}_{\tr}^{[k-1]}$, (ii) $\Phi_{\te}^{\top}\widehat{\UU}_{\te}^{[k]}$, and (iii) $\widehat{\UU}_{\te}^{[k]\top} \widehat{\UU}_{\te}^{[k]}$.  We start with (iii): assuming that all required calculations have been performed for iteration $k-1$, we have from \eqref{eq_updateUte} that
\begin{multline}
	\widehat{\UU}_{\te}^{[k]\top} \widehat{\UU}_{\te}^{[k]}
	= (\widehat{\VV}_{\tr}^{[k-1]\top} \widehat{\VV}_{\tr}^{[k-1]} + \widehat{\VV}_{\te}^{[k-1]\top} \widehat{\VV}_{\te}^{[k-1]} + \mu\mathbf{I})^{-1} \\ \cdot \Big(\widehat{\VV}_{\tr}^{[k-1]\top} \mathbf{K}_{\tr,\tr} \widehat{\VV}_{\tr}^{[k-1]} + \widehat{\VV}_{\te}^{[k-1]\top} \mathbf{K}_{\te,\tr} \widehat{\VV}_{\tr}^{[k-1]} \\
	+  \widehat{\VV}_{\tr}^{[k-1]\top} \mathbf{K}_{\tr,\te} \widehat{\VV}_{\te}^{[k-1]} + \widehat{\VV}_{\te}^{[k-1]\top} \mathbf{K}_{\te,\te} \widehat{\VV}_{\te}^{[k-1]} \Big) \\ \cdot (\widehat{\VV}_{\tr}^{[k-1]\top} \widehat{\VV}_{\tr}^{[k-1]} + \widehat{\VV}_{\te}^{[k-1]\top} \widehat{\VV}_{\te}^{[k-1]} + \mu\mathbf{I})^{-1}, \label{eq_UteUte}
\end{multline}
which only involves known quantities.  Similarly, for (ii) we have from \eqref{eq_updateUte} that
\begin{multline}
	\Phi_{\te}^{\top} \widehat{\UU}_{\te}^{[k]} =
	(\mathbf{K}_{\te,\tr} \widehat{\VV}_{\tr}^{[k-1]} + \mathbf{K}_{\te,\te} \widehat{\VV}_{\te}^{[k-1]}) \\ \cdot (\widehat{\VV}_{\tr}^{[k-1]\top} \widehat{\VV}_{\tr}^{[k-1]} + \widehat{\VV}_{\te}^{[k-1]\top} \widehat{\VV}_{\te}^{[k-1]} + \mu\mathbf{I})^{-1}, \label{eq_phiU}
\end{multline}
which also only involves known quantities.  For (i), we need to be able to calculate $\Phi_{\tr}^{\top}\widehat{\UU}_{\te}^{[k-1]}$ and $\widehat{\UU}_{\te}^{[k-1]\top} \widehat{\UU}_{\te}^{[k-1]}$. These quantities also only involve known quantities; they are the same as \eqref{eq_phiU} and \eqref{eq_UteUte}, respectively (since $k$ is arbitrary).  Note that this also allows for calculating training estimates $\widehat{\YY}_{\tr}^{[k]} \equiv \widehat{\UU}_{\tr}^{[k]} \widehat{\VV}_{\tr}^{[k]\top}$ in each iteration, which we will need in our ensemble approach presented below.  The steps involved in performing a single update are summarized in Alg. \ref{A:0}.

\begin{algorithm}[h!]
	\small
	\caption{Block Coordinate Descent}
	\label{A:0}
	\KwData{$\YY_{\tr}$, $\mathbf{K}_{\tr,\tr}$, $\mathbf{K}_{\tr,\te}$, $\mathbf{K}_{\te,\tr}$, $\mathbf{K}_{\te,\te}$, $\widehat{\UU}_{\tr}^{[0]}$, $\widehat{\VV}_{\tr}^{[0]}$, $\widehat{\VV}_{\te}^{[0]}$}
	
	\KwResult{$\widehat{\YY}_{\tr}$, $\widehat{\YY}_{\te}$}
	
	\textbf{Initialize}: $k \mapsfrom 1$ 
	
	\While{stopping criterion not met}{
		
		$\widehat{\UU}_{\tr}^{[k]} \mapsfrom \YY_{\tr}\widehat{\VV}_{\tr}^{[k-1]} \big( \widehat{\VV}_{\tr}^{[k-1]\top} \widehat{\VV}_{\tr}^{[k-1]} +2\mu\mathbf{I} \big)^{-1}$ 
		
		$\mathbf{C}_1 \mapsfrom (\widehat{\VV}_{\tr}^{[k-1]\top} \widehat{\VV}_{\tr}^{[k-1]} + \widehat{\VV}_{\te}^{[k-1]\top} \widehat{\VV}_{\te}^{[k-1]} + 2\mu\mathbf{I})^{-1}$
		
		$\mathbf{C}_2 \mapsfrom \widehat{\VV}_{\tr}^{[k-1]\top} \mathbf{K}_{\tr,\tr} \widehat{\VV}_{\tr}^{[k-1]} + \widehat{\VV}_{\te}^{[k-1]\top} \mathbf{K}_{\te,\tr} \widehat{\VV}_{\tr}^{[k-1]}$
		
		$\mathbf{C}_3 \mapsfrom \widehat{\VV}_{\tr}^{[k-1]\top} \mathbf{K}_{\tr,\te} \widehat{\VV}_{\te}^{[k-1]} + \widehat{\VV}_{\te}^{[k-1]\top} \mathbf{K}_{\te,\te} \widehat{\VV}_{\te}^{[k-1]}$
		
		$\Phi_{\te}^{\top} \widehat{\UU}_{\te}^{[k]} \mapsfrom
		(\mathbf{K}_{\te,\tr} \widehat{\VV}_{\tr}^{[k-1]} + \mathbf{K}_{\te,\te} \widehat{\VV}_{\te}^{[k-1]}) \mathbf{C}_1$
		
		$\widehat{\UU}_{\te}^{[k]\top} \widehat{\UU}_{\te}^{[k]} \mapsfrom \mathbf{C}_1 ( \mathbf{C}_2 + \mathbf{C}_3) \mathbf{C}_1$ 
		
		$\mathbf{C}_4 \mapsfrom (\widehat{\UU}_{\te}^{[k]\top} \widehat{\UU}_{\te}^{[k]} + \widehat{\UU}_{\tr}^{[k]\top} \widehat{\UU}_{\tr}^{[k]} + 2\mu\mathbf{I})^{-1}$
		
		$\widehat{\VV}_{\tr}^{[k]} \mapsfrom (\YY_{\tr}^{\top} \widehat{\UU}_{\tr}^{[k]} + \Phi_{\tr}^{\top} \widehat{\UU}_{\te}^{[k]}) \mathbf{C}_4$ 
		
		$\widehat{\VV}_{\te}^{[k]} \mapsfrom \Phi_{\te}^{\top} \widehat{\UU}_{\te}^{[k]} (\widehat{\UU}_{\te}^{[k]\top} \widehat{\UU}_{\te}^{[k]} + 2\mu\mathbf{I})^{-1}$ 
		
		$k \mapsfrom k+1$ 
	}
	$\widehat{\YY}_{\tr} \mapsfrom \widehat{\UU}_{\tr}^{[k]} \widehat{\VV}_{\tr}^{[k]\top}$ and  $\widehat{\YY}_{\te} \mapsfrom \widehat{\UU}_{\tr}^{[k]} \widehat{\VV}_{\te}^{[k]\top}$
\end{algorithm}

\textit{Soft thresholding}:
The predictions $\widehat{\YY}$ produced by Algorithm~\ref{A:0} will produce values that are not necessarily restricted to $\{0,1\}$. For traffic signal operations, whether a vehicle is present or not needs to be decided before the control decisions can be made.  For this reason, we employ the thresholding procedure given in Algorithm~\ref{A:2}. Since our approach is data-driven, it can be applied to prediction problems involving continuous data; in such cases, thresholding is not required.  Our thresholding procedure can simply be described as one that produces cut-offs for each of the $n$ network sensors separately. This resembles the operation performed by intersection controllers, which translate inductance drops at the sensors to on-off signals and this is tuned for each of the sensors separately.  Our thresholding algorithm chooses cut-offs, denoted $\{\tau_j\}_{j=1}^n$ that result in the lowest training error. We note that thresholding as a post-processing step is common in classification and prediction problems in neural networks \citep{maas2013rectifier} and logistic regression based methods \citep{harrell2015regression,goldberg2010transduction} to project non-binary solutions to binary values. 

\begin{algorithm}[h!]
	\small
	\caption{Threshold Learning}
	\label{A:2}
	\KwData{Predicted output matrices $\widehat{\YY}_{\tr}$ and $\widehat{\YY}_{\te}$, true training output $\YY_{\tr}$}
	\KwResult{Set of thresholds, one per row $\{\tau_j\}_{j \ge 1}$}
	\For{each row $j$ of $\widehat{\YY}_{\tr}$}{
		Store row $j$ of $\widehat{\YY}_{\tr}$ in a separate vector $\mathbf{y} \mapsfrom \widehat{\YY}_{j,\tr}$ 
		
		Sort $\mathbf{y}$ in ascending order $\mathbf{y} \mapsfrom \mathsf{sort}(\mathbf{y})$ 
		
		$c_j \mapsfrom \| \mathbf{y} \|_0$ \hspace{.1in} \tcp{number of non-zero elements in $\mathbf{y}$} 
		
		\For{$i \ge c_j$}{
			
			$\tau_{j,i} \mapsfrom \mathbf{y}_i$
			
			$\widehat{\YY}_{j,m,\tr} \mapsfrom \mathbbm{1}\{\widehat{\YY}_{j,m,\tr} \ge \tau_{j,i}\}$ for all $m \ge 1$
			
			Calculate the error $e_{j,i} \mapsfrom \| \widehat{\YY}_{\tr} - \YY_{\tr} \|_0$
		}
		Set $\tau_j \mapsfrom \tau_{j,i^*}$, where $i^* \mapsfrom \arg \min_{i \ge c_j} e_{j,i}$
	}
\end{algorithm}

\subsection{Sublinear Convergence to a Block Coordinate-Wise Minimizer}
In this section, we demonstrate that the block coordinate descent algorithm above converges to a coordinate-wise minimizer (a Nash point). The convergence follows from the strong convexity of the objective functions of the sub-problems as we demonstrate in Lemma~\ref{lem_additivityBounds} below.  We will also prove that the convergence rate is sub-linear. We will require estimates for the Lipschitz bounds on the gradients of our objective functions.  Hence, before stating our results formally and proving them, we will next demonstrate that the sub-problems have strongly convex objective functions and provide Lipschitz bounds on their gradients. 

\medskip

\textit{Strong Convexity}: A function $f: \RR^{p \times q} \rightarrow \RR$ is $\lambda$-strongly convex, for some $\lambda > 0$, if for all $\mathbf{G}_1, \mathbf{G}_2 \in \RR^{p \times q}$
\begin{multline}
	f(\mathbf{G}_1) -  f(\mathbf{G}_2) \ge 
	\big\langle \partial_{\mathbf{G}} f(\mathbf{G}_2), \mathbf{G}_1 - \mathbf{G}_2  \big\rangle \\ + \frac{\lambda}{2} \| \mathbf{G}_1 - \mathbf{G}_2\|_{\FF}^2 \label{eq_strConv1}
\end{multline}
or, equivalently,
\begin{equation}
	\big\langle \partial_{\mathbf{G}} f(\mathbf{G}_1) -  \partial_{\mathbf{G}} f(\mathbf{G}_2), \mathbf{G}_1 - \mathbf{G}_2  \big\rangle \ge \lambda \| \mathbf{G}_1 - \mathbf{G}_2\|_{\FF}^2. \label{eq_strConv2}
\end{equation}
We will use definition \eqref{eq_strConv2} to demonstrate the strong convexity of $F_1^{[k]}$, $F_2^{[k]}$, $F_3^{[k]}$, and $F_4^{[k]}$, defined above, for any $k \ge 1$.  We begin with $F_1^{[k]}$: for any $\UU_1,\UU_2 \in \RR^{n\times r}$, we have that
\begin{multline}
	\big\langle \partial_{\UU}F_1^{[k]}(\UU_1) - \partial_{\UU}F_1^{[k]}(\UU_2), \UU_1 - \UU_2 \big\rangle \\
	= \big\langle 2(\UU_1 - \UU_2)(\widehat{\VV}_{\tr}^{[k-1]\top}\widehat{\VV}_{\tr}^{[k-1]} + \mu \mathbf{I}), \UU_1 - \UU_2\big\rangle \\
	= \trace \big( 2(\widehat{\VV}_{\tr}^{[k-1]\top}\widehat{\VV}_{\tr}^{[k-1]} + \mu \mathbf{I}) (\UU_1 - \UU_2)^{\top} (\UU_1 - \UU_2) \big).
\end{multline}
Since $\mu > 0$, we have that $2\widehat{\VV}_{\tr}^{[k-1]\top}\widehat{\VV}_{\tr}^{[k-1]} + 2\mu \mathbf{I}$ is positive definite with positive eigenvalues.  In particular the smallest eigenvalue, defined as
\begin{equation}
	\underline{\lambda}^{[k]}_1 \equiv \min_{1 \le i \le r}\lambda(2\widehat{\VV}_{\tr}^{[k-1]\top}\widehat{\VV}_{\tr}^{[k-1]} + 2\mu \mathbf{I}) \label{eq_eigen1}
\end{equation}
is positive, i.e., $\underline{\lambda}^{[k]}_1 > 0$, where $\lambda(\MM)$ is a vector of eigenvalues of $\MM$.
It follows immediately that (see, e.g., \citep{fang1994inequalities})
\begin{equation}
	\big\langle \partial_{\UU}F_1^{[k]}(\UU_1) - \partial_{\UU}F_1^{[k]}(\UU_2), \UU_1 - \UU_2 \big\rangle 
	\ge \underline{\lambda}_1^{[k]} \| \UU_1 - \UU_2 \|_{\FF}^2. \label{eq_SC_1}
\end{equation}
Hence, $F_1^{[k]}$ is $\underline{\lambda}^{[k]}_1$-strongly convex.  Similarly,
\begin{equation}
	\big\langle \partial_{\UU}F_2^{[k]}(\UU_1) - \partial_{\UU}F_2^{[k]}(\UU_2), \UU_1 - \UU_2 \big\rangle 
	\ge \underline{\lambda}_2^{[k]} \| \UU_1 - \UU_2 \|_{\FF}^2 \label{eq_SC_2}
\end{equation}
for any $\UU_1,\UU_2 \in \RR^{h\times r}$, where
\begin{equation}
	\underline{\lambda}^{[k]}_2
	\equiv \min_{1 \le i \le r}\lambda(2\widehat{\VV}_{\tr}^{[k-1]\top}\widehat{\VV}_{\tr}^{[k-1]} + 2\widehat{\VV}_{\te}^{[k-1]\top}\widehat{\VV}_{\te}^{[k-1]} + 2\mu \mathbf{I}) \label{eq_eigen2}
\end{equation}
is the smallest eigenvalue of the matrix $2\widehat{\VV}_{\tr}^{[k-1]\top}\widehat{\VV}_{\tr}^{[k-1]} + 2\widehat{\VV}_{\te}^{[k-1]\top}\widehat{\VV}_{\te}^{[k-1]} + 2\mu \mathbf{I}$ and $\underline{\lambda}^{[k]}_2 > 0$.  Hence, $F_2^{[k]}$ is $\underline{\lambda}^{[k]}_2$-strongly convex. It can be similarly shown that $F_3^{[k]}$ and $F_4^{[k]}$ are $\underline{\lambda}^{[k]}_3$-strongly convex and $\underline{\lambda}^{[k]}_4$-strongly convex, respectively, where
\begin{equation}
	\underline{\lambda}^{[k]}_3 \equiv \min_{1 \le i \le r}\lambda(2\widehat{\UU}_{\tr}^{[k]\top}\widehat{\UU}_{\tr}^{[k]} + 2\widehat{\UU}_{\te}^{[k]\top}\widehat{\UU}_{\te}^{[k]} + 2\mu \mathbf{I}) \label{eq_eigen3}
\end{equation}
and
\begin{equation}
	\underline{\lambda}^{[k]}_4 \equiv \min_{1 \le i \le r}\lambda(2\widehat{\UU}_{\te}^{[k]\top}\widehat{\UU}_{\te}^{[k]} + 2\mu \mathbf{I}). \label{eq_eigen4}
\end{equation}

\medskip

\textit{Lipschitz Bounds}: We will now establish that the Lipschitz constants for $\partial_{\UU}F_1^{[k]}$, $\partial_{\UU}F_2^{[k]}$, $\partial_{\VV}F_3^{[k]}$, and $\partial_{\VV}F_4^{[k]}$ are the largest eigenvalues of the matrices above.  For any $\UU_1,\UU_2 \in \RR^{n \times r}$
\begin{multline}
	\big\| \partial_{\UU}F_1^{[k]}(\UU_1) - \partial_{\UU}F_1^{[k]}(\UU_2) \|_{\FF}^2 \\
	= \big\| 2(\UU_1 - \UU_2)(\widehat{\VV}_{\tr}^{[k-1]\top}\widehat{\VV}_{\tr}^{[k-1]} + \mu \mathbf{I}) \big\|_{\FF}^2 \\
	\le \big(\overline{\lambda}_1^{[k]} \big)^2 \| \UU_1 - \UU_2 \|_{\FF}^2, \label{eq_upperBoundLambda1}
\end{multline} 
where 
\begin{equation}
	\overline{\lambda}^{[k]}_1 \equiv \max_{1 \le i \le r}\lambda(2\widehat{\VV}_{\tr}^{[k-1]\top}\widehat{\VV}_{\tr}^{[k-1]} + 2\mu \mathbf{I}). \label{eq_eigenMax1}
\end{equation}
The inequality \eqref{eq_upperBoundLambda1} follows from the bounds in \citep{fang1994inequalities} and we have the Lipschitz condition:
\begin{equation}
	\big\| \partial_{\UU}F_1^{[k]}(\UU_1) - \partial_{\UU}F_1^{[k]}(\UU_2) \|_{\FF}^2 \le \big(\overline{\lambda}^{[k]}_1\big)^2 \| \UU_1 - \UU_2 \|_{\FF}^2. \label{eq_lip1}
\end{equation}
We can similarly establish the Lipschitz conditions
\begin{equation}
	\big\| \partial_{\UU}F_2^{[k]}(\UU_1) - \partial_{\UU}F_2^{[k]}(\UU_2) \|_{\FF}^2 \le \big(\overline{\lambda}^{[k]}_2\big)^2 \| \UU_1 - \UU_2 \|_{\FF}^2, \label{eq_lip2}
\end{equation}
\begin{equation}
	\big\| \partial_{\VV}F_3^{[k]}(\VV_1) - \partial_{\VV}F_3^{[k]}(\VV_2) \|_{\FF}^2 \le \big(\overline{\lambda}^{[k]}_3\big)^2 \| \VV_1 - \VV_2 \|_{\FF}^2, \label{eq_lip3}
\end{equation}
and
\begin{equation}
	\big\| \partial_{\VV}F_4^{[k]}(\VV_1) - \partial_{\VV}F_4^{[k]}(\VV_2) \|_{\FF}^2 \le \big(\overline{\lambda}^{[k]}_4\big)^2 \| \VV_1 - \VV_2 \|_{\FF}^2, \label{eq_lip4}
\end{equation}
where the constants are given by
\begin{equation}
	\overline{\lambda}^{[k]}_2
	\equiv \max_{1 \le i \le r}\lambda(2\widehat{\VV}_{\tr}^{[k-1]\top}\widehat{\VV}_{\tr}^{[k-1]} + 2\widehat{\VV}_{\te}^{[k-1]\top}\widehat{\VV}_{\te}^{[k-1]} + 2\mu \mathbf{I}), \label{eq_eigenMax2}
\end{equation}
\begin{equation}
	\overline{\lambda}^{[k]}_3 \equiv \max_{1 \le i \le r}\lambda(2\widehat{\UU}_{\tr}^{[k]\top}\widehat{\UU}_{\tr}^{[k]} + 2\widehat{\UU}_{\te}^{[k]\top}\widehat{\UU}_{\te}^{[k]} + 2\mu \mathbf{I}) \label{eq_eigenMax3}
\end{equation}
and
\begin{equation}
	\overline{\lambda}^{[k]}_4 \equiv \max_{1 \le i \le r}\lambda(2\widehat{\UU}_{\te}^{[k]\top}\widehat{\UU}_{\te}^{[k]} + 2\mu \mathbf{I}). \label{eq_eigenMax4}
\end{equation}
The smallest eigenvalues, $\{\underline{\lambda}^{[k]}_i\}_{i=1}^4$ are all bounded from below by $2 \mu$ for all $k$, which we need in the sequel.  We can also produce estimates of the largest eigenvalues, $\{\overline{\lambda}^{[k]}_i\}_{i=1}^4$, by appeal to the Perron-Frobenius theorem.  We denote these upper bounds by $L_1 \ge \overline{\lambda}^{[k]}_1$, $L_2 \ge \overline{\lambda}^{[k]}_2$, $L_3 \ge \overline{\lambda}^{[k]}_2$, and $L_4 \ge \overline{\lambda}^{[k]}_4$ (for all $k$) and define the upper bound $L_{\max} \equiv \max\{L_1^2,L_2^2,L_3^2,L_4^2\}$.

\begin{lemma}[Algorithm Convergence]  
	\label{lem_additivityBounds}
	Let $F$, $F_1^{[k]}$, $F_2^{[k]}$, $F_3^{[k]}$, and $F_4^{[k]}$ be as defined in \eqref{eq_objF} and \eqref{eq_objF1}--\eqref{eq_objF4}, and let the block updates $\widehat{\UU}_{\tr}^{[k]}$, $\widehat{\UU}_{\te}^{[k]}$, $\widehat{\VV}_{\tr}^{[k]}$, and $\widehat{\VV}_{\te}^{[k]}$ be as given in \eqref{eq_updateUtr}--\eqref{eq_updateVte}.   Assume that the initial solution $\{\widehat{\UU}_{\tr}^{[0]},\widehat{\UU}_{\te}^{[0]},\widehat{\VV}_{\tr}^{[0]},\widehat{\VV}_{\te}^{[0]}\}$ is such that $F(\widehat{\UU}_{\tr}^{[0]},\widehat{\UU}_{\te}^{[0]},\widehat{\VV}_{\tr}^{[0]},\widehat{\VV}_{\te}^{[0]}) < \infty$.  Then
	\begin{multline}
		\underset{K \rightarrow \infty}{\lim} \sum_{k = 1}^K \Big( \| \widehat{\UU}_{\tr}^{[k-1]} - \widehat{\UU}_{\tr}^{[k]} \|_{\FF}^2 + \| \widehat{\UU}_{\te}^{[k-1]} - \widehat{\UU}_{\te}^{[k]} \|_{\FF}^2
		\\ + \| \widehat{\VV}_{\tr}^{[k-1]} - \widehat{\VV}_{\tr}^{[k]} \|_{\FF}^2 + \| \widehat{\VV}_{\te}^{[k-1]} - \widehat{\VV}_{\te}^{[k]} \|_{\FF}^2 \Big) < \infty.
		\label{eq_globalConvergence}
	\end{multline}
\end{lemma}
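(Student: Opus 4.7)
The plan is to carry out a telescoping argument driven by the strong convexity of each of the four block sub-problems, as established in \eqref{eq_SC_1}, \eqref{eq_SC_2} and their analogues for $F_3^{[k]}$ and $F_4^{[k]}$. First I would use the fact that each block update is the exact minimizer of its (strongly convex) sub-objective: applying the standard ``strong convexity at the minimum'' inequality $f(x) - f(x^*) \ge \tfrac{\lambda}{2}\|x - x^*\|^2$ to each of $F_1^{[k]},\ldots,F_4^{[k]}$ yields four inequalities of the form
\begin{equation*}
F_i^{[k]}(\text{old}) - F_i^{[k]}(\text{new}) \ge \mu\,\|\text{old}-\text{new}\|_{\FF}^2,
\end{equation*}
where I have replaced $\tfrac{1}{2}\underline{\lambda}_i^{[k]}$ by its uniform lower bound $\mu$, justified by the observation right after \eqref{eq_eigenMax4} that $\underline{\lambda}_i^{[k]} \ge 2\mu$ for every $k$ and $i$. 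This is the only place where the regularization parameter is essential, and it gives a constant that is independent of $k$.

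Second I would convert each sub-problem decrease into a decrease of the full objective $F$ defined in \eqref{eq_objF}. A direct inspection of \eqref{eq_objF} versus \eqref{eq_objF1}--\eqref{eq_objF4} shows that when only one block is varied the terms of $F$ not involving that block are constants and drop out of the difference, so each sub-problem decrease coincides (up to the cosmetic rescaling between $\mu$ and $2\mu$ in the regularizer, noted explicitly by the authors after \eqref{eq_objF4}) with the decrease of $F$ produced by that single block update. Chaining the four updates of outer iteration $k$ and summing yields
\begin{equation*}
F^{[k-1]} - F^{[k]} \ge \mu\,\Delta^{[k]},
\end{equation*}
where $\Delta^{[k]}$ equals the bracketed sum inside \eqref{eq_globalConvergence} and $F^{[k]}$ denotes the value of $F$ evaluated at the iterates produced after outer iteration $k$.

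Third I would telescope from $k=1$ to $k=K$ and invoke non-negativity of $F$ (it is a sum of squared Frobenius norms, hence $F^{[K]} \ge 0$) to get
\begin{equation*}
\mu\sum_{k=1}^K \Delta^{[k]} \le F^{[0]} - F^{[K]} \le F^{[0]} < \infty,
\end{equation*}
the last inequality being the lemma's hypothesis on the initial iterate. Since $\Delta^{[k]} \ge 0$, the partial sums are monotone in $K$ and bounded, so letting $K \to \infty$ yields \eqref{eq_globalConvergence}.

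The hard part, I expect, is the bookkeeping in the second step: because the sub-problems carry a regularizer coefficient of $2\mu$ rather than the $\mu$ appearing in $F$, one must check that passing from a decrease in $F_i^{[k]}$ to a decrease in $F$ does not cost more than the slack already absorbed by replacing $\underline{\lambda}_i^{[k]}/2$ by its lower bound $\mu$. Once this accounting is carried out carefully, everything else is a routine telescoping argument, and no additional assumptions on the iterates (e.g., boundedness, isolated minima, or Lipschitz gradients at this stage) are required to reach the summability conclusion.
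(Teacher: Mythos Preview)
Your overall architecture (strong convexity of each sub-problem $\Rightarrow$ quadratic lower bound on each sub-problem decrease $\Rightarrow$ telescoping $\Rightarrow$ summability via $F\ge 0$) is the same as the paper's, and steps one and three are fine. The gap is in step two. Because each $F_i^{[k]}$ carries a regularizer $2\mu\|\cdot\|_{\FF}^2$ while $F$ in \eqref{eq_objF} carries only $\mu\|\cdot\|_{\FF}^2$, the block updates \eqref{eq_updateUtr}--\eqref{eq_updateVte} do \emph{not} minimize $F$ with the other blocks frozen; they minimize a more heavily regularized surrogate. Hence the per-block decrease of $F$ and of $F_i^{[k]}$ differ by exactly $\mu\big(\|\text{old block}\|_{\FF}^2-\|\text{new block}\|_{\FF}^2\big)$, and this discrepancy is not controlled by $\|\text{old}-\text{new}\|_{\FF}^2$: it is of order $\|\text{old}+\text{new}\|_{\FF}\cdot\|\text{old}-\text{new}\|_{\FF}$, which can swamp the slack you obtained by replacing $\tfrac{1}{2}\underline{\lambda}_i^{[k]}$ with $\mu$. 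So the claim that the sub-problem decrease ``coincides (up to cosmetic rescaling)'' with the decrease of $F$ is not just bookkeeping---as written it does not go through, and your own final paragraph correctly flags this as the hard part without resolving it.

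The paper handles the $\mu$-versus-$2\mu$ mismatch by a different device: rather than chaining the four sub-updates through intermediate values of $F$, it compares $2F$ against the sum $F_1^{[k]}+F_2^{[k]}+F_3^{[k]}+F_4^{[k]}$ evaluated at the old and new iterates, obtaining \eqref{eq_bound0} and \eqref{eq_bound01}. Doubling $F$ makes its regularizer coefficients equal to $2\mu$, so they match those in the $F_i^{[k]}$'s and drop out of the comparison; subtracting \eqref{eq_bound01} from \eqref{eq_bound0} gives \eqref{eq_Diffbound} directly, and from there the argument is exactly your steps one and three (strong convexity bounds \eqref{eq_F1bound}--\eqref{eq_F4bound}, then telescoping to \eqref{eq_objBound}). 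In short: keep your first and third steps, but replace the direct chaining in step two by the $2F$-versus-$\sum_i F_i^{[k]}$ comparison.
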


\begin{proof}
Noting that $\widehat{\UU}_{\tr}^{[k]}$ and $\widehat{\UU}_{\te}^{[k]}$ minimize $F_1^{[k]}$ and $F_2^{[k]}$, respectively, we readily have that 
\begin{multline}
	2 F(\widehat{\UU}_{\tr}^{[k-1]},\widehat{\UU}_{\te}^{[k-1]},\widehat{\VV}_{\tr}^{[k-1]},\widehat{\VV}_{\te}^{[k-1]}) 
	- F_1^{[k]}(\widehat{\UU}_{\tr}^{[k-1]}) \\
	- F_2^{[k]}(\widehat{\UU}_{\te}^{[k-1]}) - F_3^{[k]}(\widehat{\VV}_{\tr}^{[k-1]}) - F_4^{[k]}(\widehat{\VV}_{\te}^{[k-1]}) \ge 0.
	\label{eq_bound0}
\end{multline}
Similarly, since $\widehat{\VV}_{\tr}^{[k]}$ and $\widehat{\VV}_{\te}^{[k]}$ minimize $F_3^{[k]}$ and $F_4^{[k]}$, respectively, we have that 
\begin{multline}
	2 F(\widehat{\UU}_{\tr}^{[k]},\widehat{\UU}_{\te}^{[k]},\widehat{\VV}_{\tr}^{[k]},\widehat{\VV}_{\te}^{[k]}) - F_1^{[k]}(\widehat{\UU}_{\tr}^{[k]})
	- F_2^{[k]}(\widehat{\UU}_{\te}^{[k]}) \\
	- F_3^{[k]}(\widehat{\VV}_{\tr}^{[k]}) - F_4^{[k]}(\widehat{\VV}_{\te}^{[k]}) \le 0.
	\label{eq_bound01}
\end{multline}
Then, for all $k \ge 1$
\begin{multline}
	F(\widehat{\UU}_{\tr}^{[k-1]},\widehat{\UU}_{\te}^{[k-1]},\widehat{\VV}_{\tr}^{[k-1]},\widehat{\VV}_{\te}^{[k-1]})
	- F(\widehat{\UU}_{\tr}^{[k]},\widehat{\UU}_{\te}^{[k]},\widehat{\VV}_{\tr}^{[k]},\widehat{\VV}_{\te}^{[k]})
	\\ \ge \frac{1}{2} \Big(F_1^{[k]}(\widehat{\UU}_{\tr}^{[k-1]}) - F_1^{[k]}(\widehat{\UU}_{\tr}^{[k]}) + F_2^{[k]}(\widehat{\UU}_{\te}^{[k-1]}) - F_2^{[k]}(\widehat{\UU}_{\te}^{[k]})  \\+ F_3^{[k]}(\widehat{\VV}_{\tr}^{[k-1]}) - F_3^{[k]}(\widehat{\VV}_{\tr}^{[k]}) 
	+ F_4^{[k]}(\widehat{\VV}_{\te}^{[k-1]}) - F_4^{[k]}(\widehat{\VV}_{\te}^{[k]}) \Big). \label{eq_Diffbound}
\end{multline}

Since $F_1^{[k]}$ is strongly convex, we have that
\begin{multline}
	\frac{1}{2} \big( F_1^{[k]}(\widehat{\UU}_{\tr}^{[k-1]}) - F_1^{[k]}(\widehat{\UU}_{\tr}^{[k]}) \big) \\
	\ge \frac{1}{2} \big\langle \partial_{\UU} F_1^{[k]}(\widehat{\UU}_{\tr}^{[k]}) , \widehat{\UU}_{\tr}^{[k-1]} - \widehat{\UU}_{\tr}^{[k]}\big\rangle \\
	+ \frac{\underline{\lambda}_1^{[k]}}{4} \| \widehat{\UU}_{\tr}^{[k-1]} - \widehat{\UU}_{\tr}^{[k]}  \|_{\FF}^2. 
\end{multline}
Since $\partial_{\UU} F_1^{[k]}(\widehat{\UU}_{\tr}^{[k]}) = \mathbf{0}$ and $\underline{\lambda}_1^{[k]} \ge 2\mu$ in accord with the definition \eqref{eq_eigen1}, we have that
\begin{equation}
	\frac{1}{2} \big( F_1^{[k]}(\widehat{\UU}_{\tr}^{[k-1]}) - F_1^{[k]}(\widehat{\UU}_{\tr}^{[k]}) \big) \ge \frac{\mu}{2} \| \widehat{\UU}_{\tr}^{[k-1]} - \widehat{\UU}_{\tr}^{[k]}  \|_{\FF}^2. \label{eq_F1bound}
\end{equation}
Similarly, 
\begin{equation}
	\frac{1}{2} \big( F_2^{[k]}(\widehat{\UU}_{\te}^{[k-1]}) - F_2^{[k]}(\widehat{\UU}_{\te}^{[k]}) \big) \ge \frac{\mu}{2} \| \widehat{\UU}_{\te}^{[k-1]} - \widehat{\UU}_{\te}^{[k]}  \|_{\FF}^2, \label{eq_F2bound}
\end{equation}
\begin{equation}
	\frac{1}{2} \big( F_3^{[k]}(\widehat{\VV}_{\tr}^{[k-1]}) - F_3^{[k]}(\widehat{\VV}_{\tr}^{[k]}) \big) \ge \frac{\mu}{2} \| \widehat{\VV}_{\tr}^{[k-1]} - \widehat{\VV}_{\tr}^{[k]}  \|_{\FF}^2, \label{eq_F3bound}
\end{equation}
and
\begin{equation}
	\frac{1}{2} \big( F_4^{[k]}(\widehat{\VV}_{\te}^{[k-1]}) - F_4^{[k]}(\widehat{\VV}_{\te}^{[k]}) \big) \ge \frac{\mu}{2} \| \widehat{\VV}_{\te}^{[k-1]} - \widehat{\VV}_{\te}^{[k]}  \|_{\FF}^2. \label{eq_F4bound}
\end{equation}

Combining \eqref{eq_Diffbound} with \eqref{eq_F1bound}-\eqref{eq_F4bound} and summing over $k$ from $k=1$ to $k=K$, we get the following inequality:
\begin{multline}
	\frac{2}{\mu}\Big(F(\widehat{\UU}_{\tr}^{[0]},\widehat{\UU}_{\te}^{[0]},\widehat{\VV}_{\tr}^{[0]},\widehat{\VV}_{\te}^{[0]}) - F(\widehat{\UU}_{\tr}^{[K]},\widehat{\UU}_{\te}^{[K]},\widehat{\VV}_{\tr}^{[K]},\widehat{\VV}_{\te}^{[K]}) \Big) \\
	\ge \sum_{k=1}^K \Big( \| \widehat{\UU}_{\tr}^{[k-1]} - \widehat{\UU}_{\tr}^{[k]} \|_{\FF}^2 + \| \widehat{\UU}_{\te}^{[k-1]} - \widehat{\UU}_{\te}^{[k]} \|_{\FF}^2 \\
	+ \| \widehat{\VV}_{\tr}^{[k-1]} - \widehat{\VV}_{\tr}^{[k]} \|_{\FF}^2 + \| \widehat{\VV}_{\te}^{[k-1]} - \widehat{\VV}_{\te}^{[k]} \|_{\FF}^2 \Big).
	\label{eq_objBound}
\end{multline}
Noting that $F(\UU_{\tr},\UU_{\te},\VV_{\tr},\VV_{\te}) \ge 0$ for any $\{\UU_{\tr},\UU_{\te},\VV_{\tr},\VV_{\te} \}$ and taking $K \rightarrow \infty$ completes the proof.
\end{proof}

\begin{corollary}
	\label{corr1}
	The sequence $\{F(\widehat{\UU}_{\tr}^{[k]},\widehat{\UU}_{\te}^{[k]},\widehat{\VV}_{\tr}^{[k]},\widehat{\VV}_{\te}^{[k]})\}_{k \ge 0}$	is non-increasing.  Moreover, there exists a constant $0 < C_0 \le \frac{2}{\mu}F(\widehat{\UU}_{\tr}^{[0]},\widehat{\UU}_{\te}^{[0]},\widehat{\VV}_{\tr}^{[0]},\widehat{\VV}_{\te}^{[0})$ such that
	\begin{multline}
		C_0 \ge \| \widehat{\UU}_{\tr}^{[k-1]} - \widehat{\UU}_{\tr}^{[k]} \|_{\FF}^2 + \| \widehat{\UU}_{\te}^{[k-1]} - \widehat{\UU}_{\te}^{[k]} \|_{\FF}^2 \\
		+ \| \widehat{\VV}_{\tr}^{[k-1]} - \widehat{\VV}_{\tr}^{[k]} \|_{\FF}^2 + \| \widehat{\VV}_{\te}^{[k-1]} - \widehat{\VV}_{\te}^{[k]} \|_{\FF}^2
	\end{multline}
	for any $k \ge 1$.
\end{corollary}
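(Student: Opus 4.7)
The plan is to extract both claims of the corollary directly from the intermediate inequalities already produced in the proof of Lemma \ref{lem_additivityBounds}, rather than redoing the strong-convexity argument from scratch. The key observation is that inequalities \eqref{eq_Diffbound} and \eqref{eq_F1bound}--\eqref{eq_F4bound} were derived without summing over $k$, so they already yield a per-iteration descent inequality that controls both monotonicity and the per-iteration block-displacement.

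First, I would establish monotonicity. Substituting \eqref{eq_F1bound}--\eqref{eq_F4bound} into \eqref{eq_Diffbound} gives, for every $k \ge 1$,
\begin{multline}
F(\widehat{\UU}_{\tr}^{[k-1]},\widehat{\UU}_{\te}^{[k-1]},\widehat{\VV}_{\tr}^{[k-1]},\widehat{\VV}_{\te}^{[k-1]})
- F(\widehat{\UU}_{\tr}^{[k]},\widehat{\UU}_{\te}^{[k]},\widehat{\VV}_{\tr}^{[k]},\widehat{\VV}_{\te}^{[k]}) \\
\ge \frac{\mu}{2}\Big(\| \widehat{\UU}_{\tr}^{[k-1]} - \widehat{\UU}_{\tr}^{[k]} \|_{\FF}^2 + \| \widehat{\UU}_{\te}^{[k-1]} - \widehat{\UU}_{\te}^{[k]} \|_{\FF}^2 \\
+ \| \widehat{\VV}_{\tr}^{[k-1]} - \widehat{\VV}_{\tr}^{[k]} \|_{\FF}^2 + \| \widehat{\VV}_{\te}^{[k-1]} - \widehat{\VV}_{\te}^{[k]} \|_{\FF}^2\Big).
\end{multline}
Since $\mu > 0$ and the right-hand side is a nonnegative combination of squared Frobenius norms, the left-hand side is nonnegative, proving that the sequence $\{F(\widehat{\UU}_{\tr}^{[k]},\widehat{\UU}_{\te}^{[k]},\widehat{\VV}_{\tr}^{[k]},\widehat{\VV}_{\te}^{[k]})\}_{k \ge 0}$ is non-increasing.

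Second, I would use this same per-iteration inequality to produce the uniform bound. Rearranging yields, for every $k \ge 1$,
\begin{multline}
\| \widehat{\UU}_{\tr}^{[k-1]} - \widehat{\UU}_{\tr}^{[k]} \|_{\FF}^2 + \| \widehat{\UU}_{\te}^{[k-1]} - \widehat{\UU}_{\te}^{[k]} \|_{\FF}^2 \\
+ \| \widehat{\VV}_{\tr}^{[k-1]} - \widehat{\VV}_{\tr}^{[k]} \|_{\FF}^2 + \| \widehat{\VV}_{\te}^{[k-1]} - \widehat{\VV}_{\te}^{[k]} \|_{\FF}^2 \\
\le \frac{2}{\mu}\Big(F(\widehat{\UU}_{\tr}^{[k-1]},\widehat{\UU}_{\te}^{[k-1]},\widehat{\VV}_{\tr}^{[k-1]},\widehat{\VV}_{\te}^{[k-1]}) \\
- F(\widehat{\UU}_{\tr}^{[k]},\widehat{\UU}_{\te}^{[k]},\widehat{\VV}_{\tr}^{[k]},\widehat{\VV}_{\te}^{[k]})\Big).
\end{multline}
By the monotonicity just established, $F(\widehat{\UU}_{\tr}^{[k-1]},\widehat{\UU}_{\te}^{[k-1]},\widehat{\VV}_{\tr}^{[k-1]},\widehat{\VV}_{\te}^{[k-1]}) \le F(\widehat{\UU}_{\tr}^{[0]},\widehat{\UU}_{\te}^{[0]},\widehat{\VV}_{\tr}^{[0]},\widehat{\VV}_{\te}^{[0]})$, and since $F$ is a sum of Frobenius norms and thus nonnegative, the right-hand side is bounded above by $\frac{2}{\mu}F(\widehat{\UU}_{\tr}^{[0]},\widehat{\UU}_{\te}^{[0]},\widehat{\VV}_{\tr}^{[0]},\widehat{\VV}_{\te}^{[0]})$ uniformly in $k$. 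Setting $C_0$ equal to this quantity (which is finite and strictly positive by assumption on the initialization) completes the proof.

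There is no real obstacle here; the work was already done inside the proof of Lemma \ref{lem_additivityBounds}, and the corollary is essentially a bookkeeping restatement. The only minor subtlety is to be explicit that $C_0$ must satisfy $C_0 > 0$, which is automatic whenever the initialization is not already a global minimizer of $F$; otherwise the inequality holds trivially with all displacement terms equal to zero and any $C_0 > 0$ suffices.
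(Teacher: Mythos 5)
Your proposal is correct and follows essentially the same route as the paper: both assertions are read off from \eqref{eq_Diffbound} combined with \eqref{eq_F1bound}--\eqref{eq_F4bound}, with the bound on the block displacements obtained from the descent inequality, monotonicity, and $F \ge 0$ (the paper simply cites the already-telescoped inequality \eqref{eq_objBound} instead of rearranging the per-iteration inequality, which is an immaterial bookkeeping difference).
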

\begin{proof}
The first assertion follows immediately from \eqref{eq_Diffbound} and \eqref{eq_F1bound} - \eqref{eq_F4bound}.  The second assertion follows immediately from the first assertion and \eqref{eq_objBound}.
\end{proof}

Lemma \ref{lem_additivityBounds} implies convergence (in the $\ell_2$ sense) to a limit point, that is, \eqref{eq_globalConvergence} implies that as $k \rightarrow \infty$, $\| \widehat{\UU}_{\te}^{[k-1]} - \widehat{\UU}_{\te}^{[k]} \|_{\FF}^2 \rightarrow 0$, $\| \widehat{\UU}_{\tr}^{[k-1]} - \widehat{\UU}_{\tr}^{[k]} \|_{\FF}^2 \rightarrow 0$, $\| \widehat{\VV}_{\tr}^{[k-1]} - \widehat{\VV}_{\tr}^{[k]} \|_{\FF}^2 \rightarrow 0$, $\| \widehat{\VV}_{\te}^{[k-1]} - \widehat{\VV}_{\te}^{[k]} \|_{\FF}^2 \rightarrow 0$, and that there exists a solution $\{\overline{\UU}_{\tr},\overline{\UU}_{\te},\overline{\VV}_{\tr},\overline{\VV}_{\te}\}$, such that
\begin{multline}
	\| \widehat{\UU}_{\tr}^{[k]} - \overline{\UU}_{\tr} \|_{\FF}^2 + \| \widehat{\UU}_{\te}^{[k]} - \overline{\UU}_{\te} \|_{\FF}^2 \\
	+ \| \widehat{\VV}_{\tr}^{[k]} - \overline{\VV}_{\tr} \|_{\FF}^2 + \| \widehat{\VV}_{\te}^{[k]} - \overline{\VV}_{\te} \|_{\FF}^2 \underset{k \rightarrow \infty}{\longrightarrow} 0.
\end{multline}

We prove that the convergence rate is sub-linear.  To do so, we next state a well-known result (see for example \cite[Lemma 3.5]{beck2013convergence}), which we provide for the sake of completeness.

\begin{lemma}
	\label{lem_sublinear}
	Let $\{q_k\}_{k \ge 0}$ be a non-increasing sequence of positive real numbers and let $0 < \overline{q} < \infty$ and $0 < C < \infty$ be two positive constants.  Define $B \equiv \frac{C}{\overline{q}}$ and assume that (i) $q_0 < \overline{q}$ and (ii) $q_{k-1} - q_k \ge C^{-1}q_{k-1}^2$.  
	Then
	\begin{equation}
		q_k \le \frac{C}{B + k}.
	\end{equation}
\end{lemma}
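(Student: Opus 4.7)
The plan is to convert the quadratic-descent recursion $q_{k-1} - q_k \ge C^{-1} q_{k-1}^2$ into a telescoping additive recursion on the reciprocals $1/q_k$, which then immediately yields the claimed rate after summation. This is the standard reciprocal trick for proving $O(1/k)$ rates from quadratic descent, and the non-increasing hypothesis is what permits closing the argument cleanly.

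First I would divide both sides of the hypothesis $q_{k-1} - q_k \ge C^{-1}q_{k-1}^2$ by $q_{k-1} q_k$ (both strictly positive, so the division is legal), obtaining
\begin{equation}
\frac{1}{q_k} - \frac{1}{q_{k-1}} \;=\; \frac{q_{k-1} - q_k}{q_{k-1} q_k} \;\ge\; \frac{q_{k-1}}{C \, q_k}.
\end{equation}
At this point the non-increasing hypothesis $q_{k-1} \ge q_k > 0$ is invoked to replace $q_{k-1}/q_k$ by the lower bound $1$, which gives the clean additive bound
\begin{equation}
\frac{1}{q_k} - \frac{1}{q_{k-1}} \;\ge\; \frac{1}{C}, \qquad k \ge 1.
\end{equation}

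Next I would telescope this inequality from index $1$ up to index $k$, yielding $\frac{1}{q_k} \ge \frac{1}{q_0} + \frac{k}{C}$. The assumption $q_0 < \overline{q}$ gives $\frac{1}{q_0} \ge \frac{1}{\overline{q}} = \frac{B}{C}$ (using the definition $B = C/\overline{q}$), so
\begin{equation}
\frac{1}{q_k} \;\ge\; \frac{B}{C} + \frac{k}{C} \;=\; \frac{B+k}{C}.
\end{equation}
Inverting both sides (the quantities are positive) produces $q_k \le \frac{C}{B+k}$, which is the desired conclusion.

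There is essentially no obstacle here: the proof is three lines once the reciprocal trick is spotted. The only subtlety worth flagging is making sure one uses the non-increasing property at the right moment — without it, one would only get $\frac{1}{q_k} - \frac{1}{q_{k-1}} \ge \frac{q_{k-1}}{C q_k}$, which does not telescope directly. A secondary point is that hypothesis (i), $q_0 < \overline{q}$, is used only at the last step to convert $1/q_0$ into the constant $B/C$ appearing in the bound; it is not needed anywhere in the inductive/telescoping core.
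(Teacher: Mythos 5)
Your proof is correct and matches the paper's own argument essentially step for step: dividing the descent inequality by $q_{k-1}q_k$, using monotonicity to bound $q_{k-1}/q_k \ge 1$, telescoping the reciprocals, and invoking $q_0 < \overline{q}$ only at the end to introduce $B = C/\overline{q}$. No differences worth noting.
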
  
\begin{proof}
The second condition implies that
\begin{equation}
	\frac{1}{q_k} - \frac{1}{q_{k-1}} = \frac{q_{k-1} - q_k}{q_{k-1} q_k} \ge \frac{q_{k-1}^2}{C(q_{k-1} q_k)} = \frac{1}{C} \frac{q_{k-1}}{q_k} \ge \frac{1}{C}.
\end{equation}
Then
\begin{equation}
	\frac{1}{q_k} \ge 	\frac{1}{q_0} + \frac{k}{C} \ge \overline{q}^{-1} + \frac{k}{C},
\end{equation}
which completes the proof.
\end{proof}

\begin{lemma}
	\label{lem_ratioBound}
	Assume the conditions of Lemma \ref{lem_additivityBounds} hold.  Assume that $\partial_{\UU} F_1^{[k]}(\widehat{\UU}_{\tr}^{[k-1]}) \ne \mathbf{0}$, $\partial_{\UU} F_2^{[k]}(\widehat{\UU}_{\te}^{[k-1]}) \ne \mathbf{0}$, $\partial_{\VV} F_3^{[k]}(\widehat{\VV}_{\tr}^{[k-1]}) \ne \mathbf{0}$, and $\partial_{\VV} F_4^{[k]}(\widehat{\VV}_{\te}^{[k-1]}) \ne \mathbf{0}$.  Let
	\begin{multline}
		\Sigma^{[k]} \\ \equiv \Big( F_1^{[k]}(\widehat{\UU}_{\tr}^{[k]}) + F_2^{[k]}(\widehat{\UU}_{\te}^{[k]}) + F_3^{[k]}(\widehat{\VV}_{\tr}^{[k]}) + F_4^{[k]}(\widehat{\VV}_{\te}^{[k]}) \Big)^2
	\end{multline}
	and
	\begin{multline}
		\Delta^{[k]} \equiv \big\| \partial_{\UU} F_1^{[k]}(\widehat{\UU}_{\tr}^{[k-1]}) \big\|_{\FF}^2 + \big\| \partial_{\UU} F_2^{[k]}(\widehat{\UU}_{\te}^{[k-1]}) \big\|_{\FF}^2 \\
		+ \big\| \partial_{\VV} F_3^{[k]}(\widehat{\VV}_{\tr}^{[k-1]}) \big\|_{\FF}^2 + \big\| \partial_{\VV} F_4^{[k]}(\widehat{\VV}_{\te}^{[k-1]}) \big\|_{\FF}^2
	\end{multline}
	Then there exists a positive constant $0 < \overline{C}_0 < \infty$, which only depends on the initial solution $(\widehat{\UU}_{\tr}^{[0]},\widehat{\UU}_{\te}^{[0]},\widehat{\VV}_{\tr}^{[0]},\widehat{\VV}_{\te}^{[0]})$ such that
	\begin{equation}
		\overline{C}_0 \ge  1 + \frac{\widetilde{C} \Sigma^{[k]}}{\Delta^{[k]}}
	\end{equation}
%	\begin{widetext}
%	\begin{equation}
%		\overline{C}_0 \ge  1 + \frac{ \widetilde{C} \Big( F_1^{[k]}(\widehat{\UU}_{\tr}^{[k]}) + F_2^{[k]}(\widehat{\UU}_{\te}^{[k]}) + F_3^{[k]}(\widehat{\VV}_{\tr}^{[k]}) + F_4^{[k]}(\widehat{\VV}_{\te}^{[k]}) \Big)^2 }{\big\| \partial_{\UU} F_1^{[k]}(\widehat{\UU}_{\tr}^{[k-1]}) \big\|_{\FF}^2 + \big\| \partial_{\UU} F_2^{[k]}(\widehat{\UU}_{\te}^{[k-1]}) \big\|_{\FF}^2
%			+ \big\| \partial_{\VV} F_3^{[k]}(\widehat{\VV}_{\tr}^{[k-1]}) \big\|_{\FF}^2 + \big\| \partial_{\VV} F_4^{[k]}(\widehat{\VV}_{\te}^{[k-1]}) \big\|_{\FF}^2}
%	\end{equation}
%	\end{widetext}
	for any positive constant $0 < \widetilde{C} < \infty$ and all $k \ge 1$.
\end{lemma}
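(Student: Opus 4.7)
The plan is to bound $\Sigma^{[k]}$ from above and $\Delta^{[k]}$ from below separately, using the monotonicity and strong-convexity facts established earlier in this section. Let $\widehat{B}_i^{[\cdot]}$ denote the $i$-th block iterate, so that $\widehat{B}_1^{[\cdot]} \equiv \widehat{\UU}_{\tr}^{[\cdot]}$, $\widehat{B}_2^{[\cdot]} \equiv \widehat{\UU}_{\te}^{[\cdot]}$, $\widehat{B}_3^{[\cdot]} \equiv \widehat{\VV}_{\tr}^{[\cdot]}$, and $\widehat{B}_4^{[\cdot]} \equiv \widehat{\VV}_{\te}^{[\cdot]}$. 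For the numerator, three facts chain together. First, \eqref{eq_bound0} gives $\sum_{i=1}^{4} F_i^{[k]}(\widehat{B}_i^{[k-1]}) \le 2F(\widehat{B}_1^{[k-1]}, \widehat{B}_2^{[k-1]}, \widehat{B}_3^{[k-1]}, \widehat{B}_4^{[k-1]})$. Second, each $\widehat{B}_i^{[k]}$ minimizes $F_i^{[k]}$, so $F_i^{[k]}(\widehat{B}_i^{[k]}) \le F_i^{[k]}(\widehat{B}_i^{[k-1]})$. Third, Corollary \ref{corr1} implies $F(\widehat{B}_1^{[k-1]}, \ldots, \widehat{B}_4^{[k-1]}) \le F(\widehat{B}_1^{[0]}, \ldots, \widehat{B}_4^{[0]})$. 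Chaining these and squaring, $\Sigma^{[k]} \le 4\widetilde{C}\,F(\widehat{B}_1^{[0]}, \ldots, \widehat{B}_4^{[0]})^2$, a quantity that depends only on the initial iterate and $\widetilde{C}$.

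For the denominator, the Polyak--\L{}ojasiewicz inequality — a direct consequence of the strong-convexity bounds \eqref{eq_SC_1}, \eqref{eq_SC_2} and their analogues for $F_3^{[k]}$ and $F_4^{[k]}$ — gives $\|\partial F_i^{[k]}(\widehat{B}_i^{[k-1]})\|_{\FF}^2 \ge 2\underline{\lambda}_i^{[k]}\bigl(F_i^{[k]}(\widehat{B}_i^{[k-1]}) - F_i^{[k]}(\widehat{B}_i^{[k]})\bigr) \ge 4\mu\bigl(F_i^{[k]}(\widehat{B}_i^{[k-1]}) - F_i^{[k]}(\widehat{B}_i^{[k]})\bigr)$ for each $i$, since $\underline{\lambda}_i^{[k]} \ge 2\mu$. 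Summing over $i = 1,\ldots,4$, $\Delta^{[k]} \ge 4\mu\sum_{i=1}^{4}\bigl(F_i^{[k]}(\widehat{B}_i^{[k-1]}) - F_i^{[k]}(\widehat{B}_i^{[k]})\bigr)$, which is strictly positive under the non-vanishing-gradient hypothesis.

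Dividing and collecting $\widetilde{C}$, $\mu$, and $F(\widehat{B}_1^{[0]}, \ldots, \widehat{B}_4^{[0]})$ into a single $\overline{C}_0$ depending only on the initial iterate then gives the claim $\overline{C}_0 \ge 1 + \Sigma^{[k]}/\Delta^{[k]}$. The main obstacle is subtle: as iterates approach a Nash point, $\Delta^{[k]}$ shrinks toward zero while $\Sigma^{[k]}$ tends to a positive limit determined by the Nash value of $F$, so securing a ratio bound that is truly uniform in $k$ requires careful bookkeeping between the function-value drops $F_i^{[k]}(\widehat{B}_i^{[k-1]}) - F_i^{[k]}(\widehat{B}_i^{[k]})$ that appear in the $\Delta^{[k]}$ lower bound and the current values $F_i^{[k]}(\widehat{B}_i^{[k]})$ that appear in $\Sigma^{[k]}$. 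The non-vanishing-gradient hypothesis precisely rules out the degenerate case $\Delta^{[k]} = 0$, and the remaining technical work — invoking the explicit bi-linear structure of the four sub-problems together with the Lipschitz bounds \eqref{eq_lip1}--\eqref{eq_lip4} — serves to propagate this strict positivity into the final constant depending only on the initial iterate.
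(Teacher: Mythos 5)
Your numerator bound is fine and is essentially the paper's: chaining \eqref{eq_bound0}, the optimality of the block updates, and the monotonicity from Corollary \ref{corr1} gives $\Sigma^{[k]} \le 4\widetilde{C}\,F(\widehat{\UU}_{\tr}^{[0]},\widehat{\UU}_{\te}^{[0]},\widehat{\VV}_{\tr}^{[0]},\widehat{\VV}_{\te}^{[0]})^2$, which matches the role of $\widetilde{C}\widetilde{C}_1^2$ in the paper's argument. The gap is in the denominator. Your Polyak--\L{}ojasiewicz step correctly yields $\Delta^{[k]} \ge 4\mu\sum_{i=1}^4\bigl(F_i^{[k]}(\widehat{B}_i^{[k-1]}) - F_i^{[k]}(\widehat{B}_i^{[k]})\bigr)$, and this quantity is indeed strictly positive under the non-vanishing-gradient hypothesis. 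But strict positivity at each $k$ is not a lower bound that is uniform in $k$: by Lemma \ref{lem_additivityBounds} (see \eqref{eq_objBound}) the per-iteration decreases are summable, hence tend to zero, while your numerator bound is a fixed positive constant. Dividing the two bounds therefore gives a ratio bound that can diverge as $k$ grows; it cannot be ``collected'' into a single $\overline{C}_0$ depending only on the initial iterate. Your closing paragraph names exactly this obstacle and then defers its resolution to ``remaining technical work'' with the Lipschitz bounds \eqref{eq_lip1}--\eqref{eq_lip4} and the bilinear structure, but that unexecuted step is the entire content of the lemma, so the proof as written does not establish the claim.

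For comparison, the paper does not route the denominator through the function-value decreases at all. It restricts attention to the pre-convergence index set $\mathcal{K}$ of \eqref{eq_preconvergence}, uses the non-vanishing-gradient hypothesis to assert per-$k$ positive lower bounds $\widetilde{C}_2^{[k]} \le \Delta^{[k]}$, sets $\widetilde{C}_2 \equiv \inf_{k\in\mathcal{K}}\widetilde{C}_2^{[k]}$, and takes $\overline{C}_0 = 1 + \widetilde{C}_2^{-1}\widetilde{C}\widetilde{C}_1^2$; that is, it simply posits a uniform positive floor on $\Delta^{[k]}$ over $\mathcal{K}$ rather than deriving one. (One may debate how airtight that infimum step is when $\mathcal{K}$ is infinite, but it is the paper's mechanism for uniformity, and your PL-based replacement actually works against you here because it ties $\Delta^{[k]}$ to quantities that provably vanish.) To repair your argument you would need either an explicit uniform positive lower bound on $\Delta^{[k]}$ over the relevant $k$, or an upper bound on $\Sigma^{[k]}$ that decays at least as fast as $\Delta^{[k]}$; neither is supplied.
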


\begin{proof}
Define a constant $\widetilde{C}_1$ so that $\widetilde{C}_1 \ge F_1^{[0]}(\widehat{\UU}_{\tr}^{[0]}) + F_2^{[0]}(\widehat{\UU}_{\te}^{[0]}) + F_3^{[0]}(\widehat{\VV}_{\tr}^{[0]}) + F_4^{[0]}(\widehat{\VV}_{\te}^{[0]})$.  Then from \eqref{eq_bound0}, \eqref{eq_bound01}, and Corollary \ref{corr1}, we have that $\widetilde{C}_1 \ge F_1^{[k]}(\widehat{\UU}_{\tr}^{[k]}) + F_2^{[k]}(\widehat{\UU}_{\te}^{[k]}) + F_3^{[k]}(\widehat{\VV}_{\tr}^{[k]}) + F_4^{[k]}(\widehat{\VV}_{\te}^{[k]})$ for all $k \ge 1$. Next, define 
\begin{multline}
	\mathcal{K} \equiv \big\{k: \partial_{\UU} F_1^{[k]}(\widehat{\UU}_{\tr}^{[k-1]}) \ne \mathbf{0}, \partial_{\UU} F_2^{[k]}(\widehat{\UU}_{\te}^{[k-1]}) \ne \mathbf{0}, \\
	\partial_{\VV} F_3^{[k]}(\widehat{\VV}_{\tr}^{[k-1]}) \ne \mathbf{0}, \partial_{\VV} F_4^{[k]}(\widehat{\VV}_{\te}^{[k-1]}) \ne \mathbf{0} \big\}. \label{eq_preconvergence}
\end{multline}
We immediately have that there exists constants $0 < \widetilde{C}_2^{[k]} < \infty$ so that
\begin{multline}
	\widetilde{C}_2^{[k]} \le \big\| \partial_{\UU} F_1^{[k]}(\widehat{\UU}_{\tr}^{[k-1]}) \big\|_{\FF}^2 + \big\| \partial_{\UU} F_2^{[k]}(\widehat{\UU}_{\te}^{[k-1]}) \big\|_{\FF}^2 \\
	+ \big\| \partial_{\VV} F_3^{[k]}(\widehat{\VV}_{\tr}^{[k-1]}) \big\|_{\FF}^2 + \big\| \partial_{\VV} F_4^{[k]}(\widehat{\VV}_{\te}^{[k-1]}) \big\|_{\FF}^2
\end{multline}
for all $k \in \mathcal{K}$. Define the lower bound
\begin{equation}
	\widetilde{C}_2 \equiv \underset{k \in \mathcal{K}}{ \inf} ~ \widetilde{C}_2^{[k]}.%;
\end{equation} 
%then, for all $k \in \mathcal{K}$,
%\begin{equation}
%	\frac{1}{\widetilde{C}_2} \ge \frac{1}{\big\| \partial_{\UU} F_1^{[k]}(\widehat{\UU}_{\tr}^{[k-1]}) \big\|_{\FF}^2 + \big\| \partial_{\UU} F_2^{[k]}(\widehat{\UU}_{\te}^{[k-1]}) \big\|_{\FF}^2
%		+ \big\| \partial_{\VV} F_3^{[k]}(\widehat{\VV}_{\tr}^{[k-1]}) \big\|_{\FF}^2 + \big\| \partial_{\VV} F_4^{[k]}(\widehat{\VV}_{\te}^{[k-1]}) \big\|_{\FF}^2}.
%\end{equation}
Letting $\overline{C}_0 = 1 + \widetilde{C}_2^{-1}\widetilde{C}\widetilde{C}_1^2$ completes the proof.	
\end{proof}

The set $\mathcal{K}$ defined in \eqref{eq_preconvergence} is the set of iteration indices before convergence is achieved.  
It follows immediately from Lemma \ref{lem_ratioBound} that before the algorithm converges (i.e., for $k \in \mathcal{K}$) there exists a positive constant $0 < \overline{C}_0 < \infty$ such that
\begin{multline}
	\overline{C}_0 \Big(\big\| \partial_{\UU} F_1^{[k]}(\widehat{\UU}_{\tr}^{[k-1]}) \big\|_{\FF}^2 + \big\| \partial_{\UU} F_2^{[k]}(\widehat{\UU}_{\te}^{[k-1]}) \big\|_{\FF}^2 \\
	+ \big\| \partial_{\VV} F_3^{[k]}(\widehat{\VV}_{\tr}^{[k-1]}) \big\|_{\FF}^2 + \big\| \partial_{\VV} F_4^{[k]}(\widehat{\VV}_{\te}^{[k-1]}) \big\|_{\FF}^2 \Big) \\
	\ge \big\| \partial_{\UU} F_1^{[k]}(\widehat{\UU}_{\tr}^{[k-1]}) \big\|_{\FF}^2 + \big\| \partial_{\UU} F_2^{[k]}(\widehat{\UU}_{\te}^{[k-1]}) \big\|_{\FF}^2 \\
	+ \big\| \partial_{\VV} F_3^{[k]}(\widehat{\VV}_{\tr}^{[k-1]}) \big\|_{\FF}^2 + \big\| \partial_{\VV} F_4^{[k]}(\widehat{\VV}_{\te}^{[k-1]}) \big\|_{\FF}^2 \\
	+ \widetilde{C} \Big( F_1^{[k]}(\widehat{\UU}_{\tr}^{[k]}) + F_2^{[k]}(\widehat{\UU}_{\te}^{[k]}) + F_3^{[k]}(\widehat{\VV}_{\tr}^{[k]}) + F_4^{[k]}(\widehat{\VV}_{\te}^{[k]}) \Big)^2. \label{eq_inequality}
\end{multline}

We next prove our first main result related to the speed of convergence of Algorithm \ref{A:0}.  Specifically, we prove that the distance from the final solution shrinks in inverse proportion to the number of iterations, that is, few iterations are required to achieve convergence and the number of iterations can be specified beforehand.

\begin{theorem}[Sub-linear Convergence Rate]
	\label{thm_SCR}
	Let $F$, $F_1^{[k]}$, $F_2^{[k]}$, $F_3^{[k]}$, and $F_4^{[k]}$ be as defined in \eqref{eq_objF} and \eqref{eq_objF1}--\eqref{eq_objF4}, and let the block updates $\widehat{\UU}_{\tr}^{[k]}$, $\widehat{\UU}_{\te}^{[k]}$, $\widehat{\VV}_{\tr}^{[k]}$, and $\widehat{\VV}_{\te}^{[k]}$ be as given in \eqref{eq_updateUtr}--\eqref{eq_updateVte}. Then, there exists two positive constants $0 < B < \infty$ and $0 < C < \infty$ such that, for any $k \ge 0$,
	\begin{multline}
		\Big| F(\widehat{\UU}_{\tr}^{[k]},\widehat{\UU}_{\te}^{[k]},\widehat{\VV}_{\tr}^{[k]},\widehat{\VV}_{\te}^{[k]}) - F(\overline{\UU}_{\tr},\overline{\UU}_{\te},\overline{\VV}_{\tr},\overline{\VV}_{\te}) \Big| \\
		\le \frac{C}{B+k}.
	\end{multline}
\end{theorem}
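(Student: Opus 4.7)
My plan is to apply Lemma~\ref{lem_sublinear} to the non-negative scalar sequence
\begin{equation*}
q_k \equiv F(\widehat{\UU}_{\tr}^{[k]},\widehat{\UU}_{\te}^{[k]},\widehat{\VV}_{\tr}^{[k]},\widehat{\VV}_{\te}^{[k]}) - F(\overline{\UU}_{\tr},\overline{\UU}_{\te},\overline{\VV}_{\tr},\overline{\VV}_{\te}),
\end{equation*}
which is non-negative and non-increasing by Corollary~\ref{corr1} together with the fact that $F(\overline{\UU}_{\tr},\overline{\UU}_{\te},\overline{\VV}_{\tr},\overline{\VV}_{\te})$ is the limit value of the monotone sequence of objective values (convergence being guaranteed by Lemma~\ref{lem_additivityBounds}). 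The hypothesis $q_0 < \overline{q}$ of Lemma~\ref{lem_sublinear} follows immediately from the finite initialization assumption on $F$. The remaining work is to verify the quadratic descent estimate $q_{k-1} - q_k \ge C^{-1} q_{k-1}^{2}$ with a constant $C$ that is uniform in $k$, at which point Lemma~\ref{lem_sublinear} delivers $q_k \le C/(B+k)$ with $B = C/\overline{q}$, matching the claim.

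The quadratic descent is obtained by cascading three inequalities already in place. First, combining \eqref{eq_Diffbound} with the strong-convexity bounds \eqref{eq_F1bound}--\eqref{eq_F4bound} gives $q_{k-1} - q_k \ge (\mu/2)\,D_k$, where
\begin{equation*}
D_k \equiv \| \widehat{\UU}_{\tr}^{[k-1]} - \widehat{\UU}_{\tr}^{[k]}\|_{\FF}^2 + \| \widehat{\UU}_{\te}^{[k-1]} - \widehat{\UU}_{\te}^{[k]}\|_{\FF}^2 + \| \widehat{\VV}_{\tr}^{[k-1]} - \widehat{\VV}_{\tr}^{[k]}\|_{\FF}^2 + \| \widehat{\VV}_{\te}^{[k-1]} - \widehat{\VV}_{\te}^{[k]}\|_{\FF}^2.
\end{equation*}
Second, since each block update zeros out the corresponding partial gradient at the updated iterate, the Lipschitz bounds \eqref{eq_lip1}--\eqref{eq_lip4} yield $\Delta^{[k]} \le L_{\max}\, D_k$. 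Third, the ratio bound \eqref{eq_inequality} supplied by Lemma~\ref{lem_ratioBound} gives $\Delta^{[k]} \ge (\overline{C}_0 - 1)^{-1} \Sigma^{[k]}$. Chaining these three estimates produces
\begin{equation*}
q_{k-1} - q_k \ge \frac{\mu\,\widetilde{C}}{2(\overline{C}_0 - 1)\,L_{\max}}\,\Bigl( \textstyle\sum_{i=1}^{4} F_i^{[k]}(\widehat{\cdot}^{[k]}) \Bigr)^{2},
\end{equation*}
where $\widehat{\cdot}^{[k]}$ denotes evaluation of each $F_i^{[k]}$ at its respective $k$-th block iterate.

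The last and most delicate step, which I expect to be the main obstacle, is to convert $(\sum_i F_i^{[k]}(\widehat{\cdot}^{[k]}))^2$ into a multiple of $q_{k-1}^{2}$ with a $k$-independent coefficient. The natural inequality $\sum_{i=1}^{4} F_i^{[k]}(\widehat{\cdot}^{[k]}) \ge 2 F(\widehat{\cdot}^{[k]})$ (which is the reverse direction of \eqref{eq_bound01} and follows because each block update is the minimizer of its quadratic sub-objective) only produces a bound involving $q_k^{2}$ rather than $q_{k-1}^{2}$. To close this gap I would exploit the monotonicity of $q_k$ via a dichotomy on the pre-convergence set $\mathcal{K}$: either $q_k \ge \tfrac{1}{2} q_{k-1}$, in which case $q_k^{2} \ge \tfrac{1}{4} q_{k-1}^{2}$ and the cascade above immediately yields the desired estimate; or $q_k < \tfrac{1}{2} q_{k-1}$, in which case $q_{k-1} - q_k \ge \tfrac{1}{2} q_{k-1}$ and, using the uniform upper bound $q_{k-1} \le q_0$ from Corollary~\ref{corr1}, one has $q_{k-1} - q_k \ge q_{k-1}^{2}/(2 q_0)$. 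Because $\widetilde{C}$ may be chosen freely (Lemma~\ref{lem_ratioBound}), the final constant $C$ can be taken to be the maximum of the two constants arising in the dichotomy. The technical heart of the argument is verifying that all constants---in particular $\overline{C}_0$ and $L_{\max}$---remain bounded uniformly over $k \in \mathcal{K}$, after which Lemma~\ref{lem_sublinear} concludes the proof.
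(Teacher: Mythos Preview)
Your proposal is correct and shares the paper's high-level strategy---apply Lemma~\ref{lem_sublinear} to $q_k = F(\widehat{\cdot}^{[k]}) - F(\overline{\cdot})$ after establishing a quadratic descent estimate $q_{k-1} - q_k \ge C^{-1} q_{k-1}^2$---but the two arguments diverge in how that estimate is reached.

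The paper, after deriving the same first inequality $q_{k-1} - q_k \ge \tfrac{\mu}{2L_{\max}}\Delta^{[k]}$ that you obtain, takes a detour through convexity plus Cauchy--Schwarz (\eqref{eq_bound2}--\eqref{eq_bound3}) together with Corollary~\ref{corr1} to lower-bound $\Delta^{[k]}$ by $\tfrac{1}{4C_0}\bigl(\sum_i F_i^{[k]}(\widehat{\cdot}^{[k-1]}) - \sum_i F_i^{[k]}(\widehat{\cdot}^{[k]})\bigr)^2$; it then applies a reverse-triangle step and the bound $F(\widehat{\cdot}^{[k-1]}) \le \sum_i F_i^{[k]}(\widehat{\cdot}^{[k-1]})$ to arrive directly at $(F(\widehat{\cdot}^{[k-1]}))^2$, after which Lemma~\ref{lem_ratioBound} absorbs the residual $\Sigma^{[k]}$ term. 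Because the paper lands on the \emph{old} iterate, no dichotomy is needed: $q_{k-1}^2$ appears immediately. Your route instead applies Lemma~\ref{lem_ratioBound} at the outset to pass from $\Delta^{[k]}$ to $\Sigma^{[k]}$, uses \eqref{eq_bound01} (note: this is the \emph{same} direction as \eqref{eq_bound01}, not its reverse) to reach $(F(\widehat{\cdot}^{[k]}))^2 \ge q_k^2$, and then closes the gap $q_k^2 \to q_{k-1}^2$ with the standard two-case argument. Your path is shorter and sidesteps the Cauchy--Schwarz/reverse-triangle manipulation entirely, at the cost of introducing the dichotomy and an explicit dependence on $q_0$ in the final constant; the paper's path avoids the case split but requires the additional constant $C_0$ from Corollary~\ref{corr1}. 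Both yield constants that depend only on the initialization, as required.
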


\begin{proof}
The case $k = 0$ is trivial so we will focus on $k \ge 1$. 
From \eqref{eq_Diffbound}, \eqref{eq_F1bound} - \eqref{eq_F4bound}, and the Lipschitz bounds \eqref{eq_lip1} - \eqref{eq_lip2}, we have that
\begin{multline}
	F(\widehat{\UU}_{\tr}^{[k-1]},\widehat{\UU}_{\te}^{[k-1]},\widehat{\VV}_{\tr}^{[k-1]},\widehat{\VV}_{\te}^{[k-1]})
	- F(\widehat{\UU}_{\tr}^{[k]},\widehat{\UU}_{\te}^{[k]},\widehat{\VV}_{\tr}^{[k]},\widehat{\VV}_{\te}^{[k]}) \\ 
	\ge \frac{\mu}{2L_{\max}} \Big( \big\| \partial_{\UU}F_1^{[k]}(\widehat{\UU}_{\tr}^{[k-1]}) 
	- \partial_{\UU}F_1^{[k]}(\widehat{\UU}_{\tr}^{[k]}) \big\|_{\FF}^2 \\
	+ \big\| \partial_{\UU}F_2^{[k]}(\widehat{\UU}_{\te}^{[k-1]}) - \partial_{\UU}F_2^{[k]}(\widehat{\UU}_{\te}^{[k]}) \big\|_{\FF}^2 \\
	+ \big\| \partial_{\VV}F_3^{[k]}(\widehat{\VV}_{\tr}^{[k-1]}) - \partial_{\VV}F_3^{[k]}(\widehat{\VV}_{\tr}^{[k]}) \big\|_{\FF}^2  \\
	+ \big\| \partial_{\VV}F_4^{[k]}(\widehat{\VV}_{\te}^{[k-1]})
	- \partial_{\VV}F_4^{[k]}(\widehat{\VV}_{\te}^{[k]}) \big\|_{\FF}^2\Big) \\
	= \frac{\mu}{2L_{\max}} \Big( \big\| \partial_{\UU}F_1^{[k]}(\widehat{\UU}_{\tr}^{[k-1]}) \big\|_{\FF}^2 
	+ \big\| \partial_{\UU}F_2^{[k]}(\widehat{\UU}_{\te}^{[k-1]}) \big\|_{\FF}^2 \\
	+ \big\| \partial_{\VV}F_3^{[k]}(\widehat{\VV}_{\tr}^{[k-1]}) \big\|_{\FF}^2 
	+ \big\| \partial_{\VV}F_4^{[k]}(\widehat{\VV}_{\te}^{[k-1]}) \big\|_{\FF}^2\Big).
	\label{eq_bound1}
\end{multline}
By convexity of $F_1^{[k]}$ and the Cauchy-Schwartz inequality, we have that
\begin{multline}
	F_1^{[k]}(\widehat{\UU}_{\tr}^{[k-1]}) -  F_1^{[k]}(\widehat{\UU}_{\tr}^{[k]}) 
	\le \big\langle \partial_{\UU} F_1^{[k]}(\widehat{\UU}_{\tr}^{[k-1]}), \widehat{\UU}_{\tr}^{[k-1]} - \widehat{\UU}_{\tr}^{[k]}  \big\rangle \\
	\le \big\| \partial_{\UU} F_1^{[k]}(\widehat{\UU}_{\tr}^{[k-1]}) \big\|_{\FF} \big\| \widehat{\UU}_{\tr}^{[k-1]} - \widehat{\UU}_{\tr}^{[k]} \big\|_{\FF}.
	\label{eq_bound2}
\end{multline}
Then
\begin{equation}
	\big\| \partial_{\UU} F_1^{[k]}(\widehat{\UU}_{\tr}^{[k-1]}) \big\|_{\FF}^2 \ge \frac{1}{C_0} \big( F_1^{[k]}(\widehat{\UU}_{\tr}^{[k-1]}) -  F_1^{[k]}(\widehat{\UU}_{\tr}^{[k]}) \big)^2,
	\label{eq_bound3}
\end{equation}
where $C_0$ is the positive constant of Corollary \ref{corr1}. We can write similar bounds for $F_2^{[k]}$, $F_3^{[k]}$, and $F_2^{[k]}$ to obtain
\begin{multline}
	\frac{1}{4C_0}\Big(F_1^{[k]}(\widehat{\UU}_{\tr}^{[k-1]}) + F_2^{[k]}(\widehat{\UU}_{\te}^{[k-1]}) + F_3^{[k]}(\widehat{\VV}_{\tr}^{[k-1]}) \\
	+ F_4^{[k]}(\widehat{\VV}_{\te}^{[k-1]}) 
	- F_1^{[k]}(\widehat{\UU}_{\tr}^{[k]}) - F_2^{[k]}(\widehat{\UU}_{\te}^{[k]}) - F_3^{[k]}(\widehat{\VV}_{\tr}^{[k]}) \\
	- F_4^{[k]}(\widehat{\VV}_{\te}^{[k]}) \Big)^2 
	\le \big\| \partial_{\UU} F_1^{[k]}(\widehat{\UU}_{\tr}^{[k-1]}) \big\|_{\FF}^2 + \big\| \partial_{\UU} F_2^{[k]}(\widehat{\UU}_{\te}^{[k-1]}) \big\|_{\FF}^2  \\
	+ \big\| \partial_{\UU} F_3^{[k]}(\widehat{\VV}_{\tr}^{[k-1]}) \big\|_{\FF}^2 + \big\| \partial_{\UU} F_4^{[k]}(\widehat{\VV}_{\te}^{[k-1]}) \big\|_{\FF}^2,
	\label{eq_bound4}
\end{multline}
where the left hand side follows from the triangle inequality.  Applying the reverse triangle inequality, we get
\begin{multline}
	\frac{1}{4C_0}\Big(F_1^{[k]}(\widehat{\UU}_{\tr}^{[k-1]}) + F_2^{[k]}(\widehat{\UU}_{\te}^{[k-1]}) + F_3^{[k]}(\widehat{\VV}_{\tr}^{[k-1]}) \\
	+ F_4^{[k]}(\widehat{\VV}_{\te}^{[k-1]}) \Big)^2 
	\le \frac{1}{4C_0}\Big(F_1^{[k]}(\widehat{\UU}_{\tr}^{[k]}) + F_2^{[k]}(\widehat{\UU}_{\te}^{[k]}) \\
	+ F_3^{[k]}(\widehat{\VV}_{\tr}^{[k]}) + F_4^{[k]}(\widehat{\VV}_{\te}^{[k]}) \Big)^2 
	+ \big\| \partial_{\UU} F_1^{[k]}(\widehat{\UU}_{\tr}^{[k-1]}) \big\|_{\FF}^2  \\
	+ \big\| \partial_{\UU} F_2^{[k]}(\widehat{\UU}_{\te}^{[k-1]}) \big\|_{\FF}^2    
	+ \big\| \partial_{\UU} F_3^{[k]}(\widehat{\VV}_{\tr}^{[k-1]}) \big\|_{\FF}^2 \\
	+ \big\| \partial_{\UU} F_4^{[k]}(\widehat{\VV}_{\te}^{[k-1]}) \big\|_{\FF}^2.
	\label{eq_bound5}
\end{multline}
It can be easily shown that 
\begin{multline}
	F(\widehat{\UU}_{\tr}^{[k-1]},\widehat{\UU}_{\te}^{[k-1]},\widehat{\VV}_{\tr}^{[k-1]},\widehat{\VV}_{\te}^{[k-1]}) \\
	\le F_1^{[k]}(\widehat{\UU}_{\tr}^{[k-1]}) + F_2^{[k]}(\widehat{\UU}_{\te}^{[k-1]}) + F_3^{[k]}(\widehat{\VV}_{\tr}^{[k-1]}) \\
	+ F_4^{[k]}(\widehat{\VV}_{\te}^{[k-1]}).
	\label{eq_bound6}
\end{multline}
Hence,
\begin{multline}
	\frac{1}{4C_0}\Big(F(\widehat{\UU}_{\tr}^{[k-1]},\widehat{\UU}_{\te}^{[k-1]},\widehat{\VV}_{\tr}^{[k-1]},\widehat{\VV}_{\te}^{[k-1]}) \Big)^2 \\
	\le \frac{1}{4C_0}\Big(F_1^{[k]}(\widehat{\UU}_{\tr}^{[k]}) + F_2^{[k]}(\widehat{\UU}_{\te}^{[k]}) 
	+ F_3^{[k]}(\widehat{\VV}_{\tr}^{[k]}) 
	+ F_4^{[k]}(\widehat{\VV}_{\te}^{[k]}) \Big)^2 \\
	+ \big\| \partial_{\UU} F_1^{[k]}(\widehat{\UU}_{\tr}^{[k-1]}) \big\|_{\FF}^2 
	+ \big\| \partial_{\UU} F_2^{[k]}(\widehat{\UU}_{\te}^{[k-1]}) \big\|_{\FF}^2 \\ 
	+ \big\| \partial_{\UU} F_3^{[k]}(\widehat{\VV}_{\tr}^{[k-1]}) \big\|_{\FF}^2 
	+ \big\| \partial_{\UU} F_4^{[k]}(\widehat{\VV}_{\te}^{[k-1]}) \big\|_{\FF}^2.
	\label{eq_bound7}
\end{multline}
From \eqref{eq_inequality} (and Lemma \ref{lem_ratioBound}) we have that there exists a positive constant $\overline{C}_0$ which only depends on the initial solution so that \eqref{eq_bound7} implies that
\begin{multline}
	\frac{1}{4C_0\overline{C}_0}\Big(F(\widehat{\UU}_{\tr}^{[k-1]},\widehat{\UU}_{\te}^{[k-1]},\widehat{\VV}_{\tr}^{[k-1]},\widehat{\VV}_{\te}^{[k-1]}) \Big)^2 \\
	\le \big\| \partial_{\UU} F_1^{[k]}(\widehat{\UU}_{\tr}^{[k-1]}) \big\|_{\FF}^2 + \big\| \partial_{\UU} F_2^{[k]}(\widehat{\UU}_{\te}^{[k-1]}) \big\|_{\FF}^2  \\
	+ \big\| \partial_{\UU} F_3^{[k]}(\widehat{\VV}_{\tr}^{[k-1]}) \big\|_{\FF}^2 + \big\| \partial_{\UU} F_4^{[k]}(\widehat{\VV}_{\te}^{[k-1]}) \big\|_{\FF}^2.
	\label{eq_bound8}
\end{multline}
Combining this with \eqref{eq_bound1}, we get
\begin{multline}
	F(\widehat{\UU}_{\tr}^{[k-1]},\widehat{\UU}_{\te}^{[k-1]},\widehat{\VV}_{\tr}^{[k-1]},\widehat{\VV}_{\te}^{[k-1]}) 
	- F(\widehat{\UU}_{\tr}^{[k]},\widehat{\UU}_{\te}^{[k]},\widehat{\VV}_{\tr}^{[k]},\widehat{\VV}_{\te}^{[k]}) \\
	\ge \frac{\mu}{8L_{\max}C_0 \overline{C}_0} \Big( F(\widehat{\UU}_{\tr}^{[k-1]},\widehat{\UU}_{\te}^{[k-1]},\widehat{\VV}_{\tr}^{[k-1]},\widehat{\VV}_{\te}^{[k-1]}) \Big)^2.
	\label{eq_bound9}
\end{multline}
Since $F(\widehat{\UU}_{\tr}^{[k]},\widehat{\UU}_{\te}^{[k]},\widehat{\VV}_{\tr}^{[k]},\widehat{\VV}_{\te}^{[k]}) \ge F(\overline{\UU}_{\tr},\overline{\UU}_{\te},\overline{\VV}_{\tr},\overline{\VV}_{\te})$ for all $k$, \eqref{eq_bound9} implies that
\begin{multline}
	F(\widehat{\UU}_{\tr}^{[k-1]},\widehat{\UU}_{\te}^{[k-1]},\widehat{\VV}_{\tr}^{[k-1]},\widehat{\VV}_{\te}^{[k-1]}) 
	- F(\widehat{\UU}_{\tr}^{[k]},\widehat{\UU}_{\te}^{[k]},\widehat{\VV}_{\tr}^{[k]},\widehat{\VV}_{\te}^{[k]}) \\
	\ge \frac{\mu}{8L_{\max}C_0 \overline{C}_0} \Big( F(\widehat{\UU}_{\tr}^{[k-1]},\widehat{\UU}_{\te}^{[k-1]},\widehat{\VV}_{\tr}^{[k-1]},\widehat{\VV}_{\te}^{[k-1]})  \\
	- F(\overline{\UU}_{\tr},\overline{\UU}_{\te},\overline{\VV}_{\tr},\overline{\VV}_{\te}) \Big)^2.
	\label{eq_bound10}
\end{multline}
We next add and subtract $F(\overline{\UU}_{\tr},\overline{\UU}_{\te},\overline{\VV}_{\tr},\overline{\VV}_{\te})$ on the left hand side, and we invoke Lemma \eqref{lem_sublinear} with 
\begin{equation}
	q_k \equiv F(\widehat{\UU}_{\tr}^{[k]},\widehat{\UU}_{\te}^{[k]},\widehat{\VV}_{\tr}^{[k]},\widehat{\VV}_{\te}^{[k]}) - F(\overline{\UU}_{\tr},\overline{\UU}_{\te},\overline{\VV}_{\tr},\overline{\VV}_{\te})
\end{equation}
and note that $\{q_k\}_{k \ge 0}$ is a non-negative sequence by the first assertion of Corollary \ref{corr1}.  Letting $B = 16L_{\max} \overline{C}_0 \mu^{-2}$ and $C = 8 L_{\max} C_0 \overline{C}_0 \mu^{-1}$ completes the proof.
\end{proof}

The result in Theorem \ref{thm_SCR} suggests that one can determine a number of iterations as a stopping criterion for Algorithm \ref{A:0}.  More importantly, the theorem says that this number need not be large in order to get close to the limit.  The next theorem provides our second main result.  It demonstrates that the limiting solution $\{\overline{\UU}_{\tr},\overline{\UU}_{\te},\overline{\VV}_{\tr},\overline{\VV}_{\te}\}$ implied by Lemma \ref{lem_additivityBounds} is a block coordinate-wise minimizer of the matrix completion problem \eqref{eq_KMCP}.

\begin{theorem}[Block Corrdinate-Wise Minimizer]
	\label{thm_BCW_Min}
	Under the assumptions of Lemma \ref{lem_additivityBounds}, the variational inequalities \eqref{eq_VI1}-\eqref{eq_VI4} hold for all $\UU_{\tr} \in \RR^{n \times r}$, all $\UU_{\te} \in \RR^{h \times r}$, all $\VV_{\tr} \in \RR^{T_{\tr} \times r}$, and all $\VV_{\te} \in \RR^{T_{\te} \times r}$.
\end{theorem}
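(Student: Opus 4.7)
The plan is to pass to the limit in the first-order optimality conditions of the four BCD sub-problems. The analytic heavy lifting has already been done in Lemma \ref{lem_additivityBounds}, which (together with the discussion following its proof) supplies the convergent sequence $\{\widehat{\UU}_{\tr}^{[k]},\widehat{\UU}_{\te}^{[k]},\widehat{\VV}_{\tr}^{[k]},\widehat{\VV}_{\te}^{[k]}\}_{k\ge0}$ and its Frobenius limit $(\overline{\UU}_{\tr},\overline{\UU}_{\te},\overline{\VV}_{\tr},\overline{\VV}_{\te})$, so all that remains is a continuity argument.

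The starting observation is that each block update is, by construction, the unconstrained minimizer of a strongly convex quadratic sub-problem on the relevant Euclidean space. Hence, for every $k\ge1$, first-order optimality yields $\partial_{\UU_{\tr}}F_1^{[k]}(\widehat{\UU}_{\tr}^{[k]})=\mathbf{0}$, $\partial_{\UU_{\te}}F_2^{[k]}(\widehat{\UU}_{\te}^{[k]})=\mathbf{0}$, $\partial_{\VV_{\tr}}F_3^{[k]}(\widehat{\VV}_{\tr}^{[k]})=\mathbf{0}$, and $\partial_{\VV_{\te}}F_4^{[k]}(\widehat{\VV}_{\te}^{[k]})=\mathbf{0}$. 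Moreover, up to the regularizer coefficient, each sub-problem gradient equals the corresponding block partial derivative of $F$ evaluated at a point whose other three blocks are held at the latest available iterate values (this is immediate from the splitting that produced \eqref{eq_objF1}--\eqref{eq_objF4}); for instance, $\partial_{\UU_{\tr}}F_1^{[k]}(\UU)=\partial_{\UU_{\tr}}F(\UU,\widehat{\UU}_{\te}^{[k-1]},\widehat{\VV}_{\tr}^{[k-1]},\widehat{\VV}_{\te}^{[k-1]})$ once the $\mu$-versus-$2\mu$ regularizer constants are reconciled, and similarly for the other three partials.

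Passing to the limit $k\to\infty$ and using joint continuity of the partial derivatives of the quadratic function $F$ then gives $\partial_{\UU_{\tr}}F(\overline{\UU}_{\tr},\overline{\UU}_{\te},\overline{\VV}_{\tr},\overline{\VV}_{\te})=\mathbf{0}$ together with the three analogous identities for $\UU_{\te}$, $\VV_{\tr}$, and $\VV_{\te}$. The four VIs \eqref{eq_VI1}--\eqref{eq_VI4} then follow immediately: a vanishing gradient gives $\langle \mathbf{0}, \UU_{\tr}-\overline{\UU}_{\tr}\rangle = 0\ge 0$ for every admissible perturbation, and so on for the other three blocks. Equivalently, by convexity of $F$ in each block with the other three fixed, $\overline{\UU}_{\tr}$ is the global minimizer of $\UU_{\tr}\mapsto F(\UU_{\tr},\overline{\UU}_{\te},\overline{\VV}_{\tr},\overline{\VV}_{\te})$, and similarly for the other blocks, which is precisely the defining property of a block coordinate-wise minimizer.

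The principal obstacle is the bookkeeping needed to reconcile the $\mu$ versus $2\mu$ coefficients appearing in $F$ versus the $F_i^{[k]}$'s, so that sub-problem stationarity translates cleanly into stationarity of $F$ at the limit; once a consistent convention is fixed, the rest is a routine continuity-plus-convergence-of-iterates argument, with the monotone-decrease, summability, and existence-of-limit machinery already packaged into Lemma \ref{lem_additivityBounds} and Corollary \ref{corr1}.
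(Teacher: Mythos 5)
Your argument is correct in substance, but it follows a genuinely different route from the paper's. The paper never passes to the limit in the stationarity equations: it passes to the limit in the \emph{minimality} of the sub-problems (asserting that $\overline{\UU}_{\tr}$ minimizes $\|\UU_{\tr}\overline{\VV}_{\tr}^{\top}-\YY_{\tr}\|_{\FF}^{2}+2\mu\|\UU_{\tr}\|_{\FF}^{2}$), deduces the inequality $F(\UU_{\tr},\overline{\UU}_{\te},\overline{\VV}_{\tr},\overline{\VV}_{\te})\ge F(\overline{\UU}_{\tr},\overline{\UU}_{\te},\overline{\VV}_{\tr},\overline{\VV}_{\te})$ for all $\UU_{\tr}$, and then extracts \eqref{eq_VI1} by a difference-quotient argument along $\overline{\UU}_{\tr}+h(\UU_{\tr}-\overline{\UU}_{\tr})$ as $h\rightarrow 0^{+}$, repeating the argument for the other three blocks. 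You instead send $k\to\infty$ in $\partial_{\UU}F_1^{[k]}(\widehat{\UU}_{\tr}^{[k]})=\mathbf{0}$ (and its three analogues), using continuity of the gradients and convergence of the consecutive iterates $(\widehat{\UU}_{\tr}^{[k]},\widehat{\VV}_{\tr}^{[k-1]})$, to conclude that the block gradients of $F$ vanish at the limit, so that \eqref{eq_VI1}--\eqref{eq_VI4} hold with equality. Because the variational inequalities are posed over the whole spaces $\RR^{n\times r}$, $\RR^{h\times r}$, $\RR^{T_{\tr}\times r}$, $\RR^{T_{\te}\times r}$, vanishing block gradients and the VIs are equivalent, so your conclusion is the same statement in its sharpest form, and you avoid the directional-derivative step; the paper's route, in exchange, needs only block minimality at the limit and would survive even without identifying the sub-problem gradients with block partials of $F$. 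Both proofs lean on the same asserted ingredient, namely that the iterates converge to a limit point (the discussion following Lemma \ref{lem_additivityBounds} and Corollary \ref{corr1}).

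The one step that needs more than a gesture is the "$\mu$ versus $2\mu$ bookkeeping." With the paper's literal definitions, $\partial_{\UU}F_1^{[k]}(\UU)=\partial_{\UU_{\tr}}F(\UU,\cdot,\widehat{\VV}_{\tr}^{[k-1]},\cdot)+2\mu\UU$, since \eqref{eq_objF1}--\eqref{eq_objF4} carry the regularizer $2\mu\|\cdot\|_{\FF}^{2}$ while $F$ in \eqref{eq_objF} carries $\mu\|\cdot\|_{\FF}^{2}$; the discrepancy is an additive term, not a harmless rescaling, so taken at face value your limit yields $\partial_{\UU_{\tr}}F(\overline{\UU}_{\tr},\overline{\UU}_{\te},\overline{\VV}_{\tr},\overline{\VV}_{\te})=-2\mu\,\overline{\UU}_{\tr}$ rather than $\mathbf{0}$, and the VI would then fail for general $\overline{\UU}_{\tr}$. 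This is not a defect peculiar to your argument: the paper's own proof makes the same silent identification when it infers block minimality of the $\mu$-regularized $F$ from minimality of the $2\mu$-regularized sub-problem, so it is an inconsistency in the paper's constants (the factor of two it calls "unnecessary"). Once the coefficients are made to agree, your proof goes through exactly as written; just state that normalization explicitly rather than leaving it as an implicit reconciliation.
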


\begin{proof}
In the limit, we have that
\begin{equation}
	\overline{\UU}_{\tr} = \underset{\UU_{\tr} \in  \RR^{n \times r}}{\arg \min} ~ \|\UU_{\tr} \overline{\VV}_{\tr}^{\top} - \YY_{\tr}\|^2_{\FF} + 2\mu \| \UU_{\tr} \|_{\FF}^2.
\end{equation}
Hence, for all $\UU_{\tr} \in \RR^{n \times r}$
\begin{equation}
	F(\UU_{\tr}, \overline{\UU}_{\te},\overline{\VV}_{\tr},\overline{\VV}_{\te}) \ge F(\overline{\UU}_{\tr}, \overline{\UU}_{\te},\overline{\VV}_{\tr},\overline{\VV}_{\te}). \label{eq_inequality1}
\end{equation}
In particular, consider the matrix $\overline{\UU}_{\tr} + h (\UU_{\tr} - \overline{\UU}_{\tr})$, where $h$ is a scalar and $\UU_{\tr}$ is any $\RR^{n \times r}$ matrix.  From \eqref{eq_inequality1}, we have for each $h > 0$ that
\begin{multline}
	\frac{1}{h} \Big( F\big(\overline{\UU}_{\tr} + h (\UU_{\tr} - \overline{\UU}_{\tr}), \overline{\UU}_{\te},\overline{\VV}_{\tr},\overline{\VV}_{\te}\big) \\
	- F(\overline{\UU}_{\tr}, \overline{\UU}_{\te},\overline{\VV}_{\tr},\overline{\VV}_{\te}) \Big) \ge 0
\end{multline}
is true for all $\UU_{\tr} \in \RR^{n \times r}$.  
Since $F$ is continuous in all block coordinates, upon taking $h \rightarrow 0$ the left-hand side converges to the directional partial derivative of $F$ along $\UU_{\tr} - \overline{\UU}_{\tr}$ and since the inequality is true for all $h$, it is true in the limit.  Then, since the directional derivative is simply the inner product of the derivative and the direction, it follows that the first variational inequality,
\begin{equation}
	\big\langle \partial_{\UU_{\tr}}F(\overline{\UU}_{\tr}, \overline{\UU}_{\te},\overline{\VV}_{\tr},\overline{\VV}_{\te}), \UU_{\tr} - \overline{\UU}_{\tr} \big\rangle \ge 0,
\end{equation}
is true for all $\UU_{\tr} \in \RR^{n \times r}$.  The same reasoning can be used to demonstrate that the variational inequalities \eqref{eq_VI2}, \eqref{eq_VI3}, and \eqref{eq_VI4} also hold for $\overline{\UU}_{\te}$, $\overline{\VV}_{\tr}$, and $\overline{\VV}_{\te}$, respectively.
\end{proof}

The interpretation of the solution as a Nash point (an equilibrium) can be seen immediately upon examining the inequalities
\begin{equation}
	F(\UU_{\tr}, \overline{\UU}_{\te},\overline{\VV}_{\tr},\overline{\VV}_{\te}) \ge F(\overline{\UU}_{\tr}, \overline{\UU}_{\te},\overline{\VV}_{\tr},\overline{\VV}_{\te}), \label{eq_nash1}
\end{equation}
\begin{equation}
	F(\overline{\UU}_{\tr},\UU_{\te},\overline{\VV}_{\tr},\overline{\VV}_{\te}) \ge F(\overline{\UU}_{\tr}, \overline{\UU}_{\te},\overline{\VV}_{\tr},\overline{\VV}_{\te}), \label{eq_nash2}
\end{equation}
\begin{equation}
	F(\overline{\UU}_{\tr},\overline{\UU}_{\te},\VV_{\tr},\overline{\VV}_{\te}) \ge F(\overline{\UU}_{\tr}, \overline{\UU}_{\te},\overline{\VV}_{\tr},\overline{\VV}_{\te}), \label{eq_nash3}
\end{equation}
and
\begin{equation}
	F(\overline{\UU}_{\tr},\overline{\UU}_{\te},\overline{\VV}_{\tr},\VV_{\te}) \ge F(\overline{\UU}_{\tr}, \overline{\UU}_{\te},\overline{\VV}_{\tr},\overline{\VV}_{\te}) \label{eq_nash4}
\end{equation}
for all $\UU_{\tr} \in \RR^{n \times r}$ $\UU_{\te} \in \RR^{h \times r}$, $\VV_{\tr} \in \RR^{T_{\tr} \times r}$, and $\VV_{\te} \in \RR^{T_{\te} \times r}$.  The inequalities \eqref{eq_nash2}, \eqref{eq_nash3}, and \eqref{eq_nash4} can be established in the same way that \eqref{eq_nash1} was established in the proof of Theorem \ref{thm_BCW_Min}.

\section{Ensemble Learning, Sample Complexity, and Time Complexity} \label{S:ensemble}
\subsection{Incorporating Day-to-Day Historical Patterns}
Ensemble learning, a.k.a. \emph{adaptive boosting}, is a technique that is used to enhance the performance of a learning task by combining the predictions of an ensemble of ``weak learners''. The weak learners can include a variety of algorithms designed to perform the same prediction task.  The prediction performance of the individual algorithms can be weak (hence the nomenclature ``weak learners'') but the predictions produced by ensemble learning, which are weighted sums of the individual predictions, are \emph{guaranteed} to have lower errors on the training data.  Our proposed ensemble learning implementation considers data from past days as the ensemble.  In other words, each of our weak learners solves a matrix completion problem (using Algorithms \ref{A:0} and \ref{A:2}) with training data covering the same time periods but from different days.  In this way, we can interpret our ensemble learning approach as one that \emph{learns and utilizes day-to-day historical patterns}. 

Without loss of generality, let $\DD$ be set of indices representing the weak learners; each element of $\DD$ corresponds to a day in the past with lower indices representing more recent days, in particular, $d=1$ corresponds to the `present'.  The set $\DD$ can be chosen to include 4-5 weeks of past data and either week days or week ends are chosen based on whether the present day is a week day or week end.  Let $\theta(t,d)$ denote the weight associated with time step $t$ in ensemble $d$.  The joint matrix is now given by
\begin{equation}
	\ZZ(\Theta)
	\equiv 
	\begin{bmatrix}
		\YY_{|\DD|,\tr} & \cdots & \YY_{1,\tr} & \YY_{\te} \\ 
		\Theta_{|\DD|} \odot \Phi(\XX_{|\DD|,\tr}) & \cdots & \Theta_1 \odot \Phi(\XX_{1,\tr}) & \Phi_{\te} 
	\end{bmatrix},
	\label{eq_Z3}
\end{equation}
where $\YY_{d,\tr} \equiv [\mathbf{y}_{d,\tr}(1) \cdots \mathbf{y}_{d,\tr}(T_{\tr})]$ and $\XX_{d,\tr} \equiv [\BB_L\mathbf{x}_{d,\tr}(1) \cdots \BB_L\mathbf{x}_{d,\tr}(T_{\tr})]$ are the output and input matrices, respectively, corresponding to day $d$,  $\Phi$ applies the (unknown) mapping function to each of the columns of its argument, 
\begin{equation}
	\Theta_d \equiv \begin{bmatrix} \theta(1,d)\mathbf{e} \cdots \theta(T_{\tr},d)\mathbf{e} \end{bmatrix} \in  \RR^{h \times T_{\tr}}
\end{equation}
is a matrix of weights corresponding to day $d$, $\mathbf{e}$ is vector of 1s of dimension $h$, and $\odot$ is the component-wise (or Hadamard) product.  Given $\{ \theta(t,d) \}_{1 \le t \le T_{\tr}, d \in \DD}$, the prediction problem is simply a matrix completion problem, which is solved by block coordinate descent.  Here, the training data are given by the augmented matrices 
\begin{equation}
	\widetilde{\YY}_{\tr} \equiv [\YY_{|\DD|,\tr} \cdots \YY_{1,\tr}] \in \RR^{n \times |\DD|T_{\tr}}
\end{equation}
and 
\begin{multline}
	\widetilde{\Phi}_{\tr}(\Theta) \equiv \begin{bmatrix}\Theta_{|\DD|} \odot \Phi(\XX_{|\DD|,\tr}) & \cdots & \Theta_1 \odot \Phi(\XX_{1,\tr}) \end{bmatrix} \\
	\in \RR^{h \times |\DD|T_{\tr}}.	
\end{multline}
The joint matrix is given by
\begin{equation}
	\ZZ(\Theta) \equiv 
	\begin{bmatrix}
		\widetilde{\YY}_{\tr} & \YY_{\te} \\ 
		\widetilde{\Phi}_{\tr}(\Theta) & \Phi_{\te} 
	\end{bmatrix} \in \RR^{(n+h)\times (|\DD|T_{\tr}+T_{\te})}.
	\label{eq_Z4}
\end{equation}

The overall prediction algorithm is depicted in Fig. \ref{F:1}.   
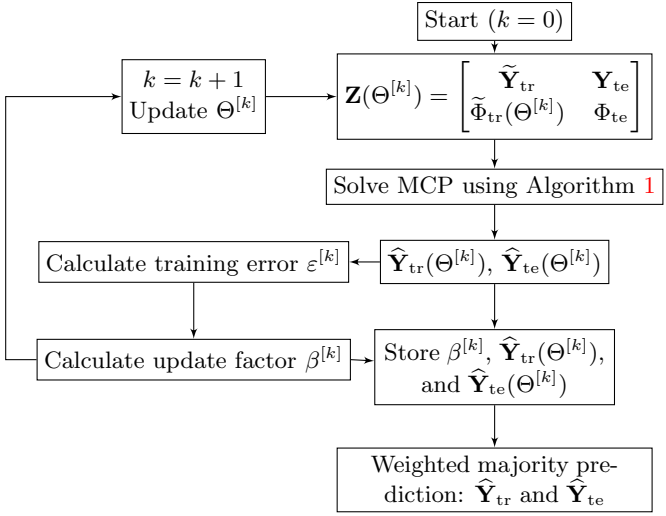
\begin{figure}[htbp]
	\scriptsize
	\tikzstyle{format}=[rectangle,draw,thin,fill=white,align=center]
	\begin{center}
		\begin{tikzpicture}[node distance=1cm,
			auto,>=latex',
			thin,
			start chain=going below,
			every join/.style={norm},]
			\node[format](A){\small Start ($k = 0$)};
			\node[format,below of=A] (n0){\small $\ZZ(\Theta^{[k]}) = \begin{bmatrix}
					\widetilde{\YY}_{\tr} & \YY_{\te} \\ 
					\widetilde{\Phi}_{\tr}(\Theta^{[k]}) & \Phi_{\te}
				\end{bmatrix}$};
			\draw[->] (A.south) -- (n0);
			\node[format,below of=n0,yshift=-.2cm] (n2){\small Solve MCP using Algorithm \ref{A:0}};
			\node[format,below of=n2] (n3){\small $\widehat{\YY}_{\tr}(\Theta^{[k]})$, $\widehat{\YY}_{\te}(\Theta^{[k]})$};
			\draw[->] (n0.south) -- (n2);
			\draw[->] (n2.south) -- (n3);
			\node[format,left of=n3, xshift=-3cm] (n4){\small Calculate training error $\varepsilon^{[k]}$};
			\draw[->] (n3.west) -- (n4);
			\node[format,below of=n4,yshift=-.3cm] (n5){\small Calculate update factor $\beta^{[k]}$};
			\draw[->] (n4.south) -- (n5);
			\node[format,left of=n0, xshift=-3cm] (n6){\small $\begin{matrix} k = k+1 \\ \mathrm{Update}~ \Theta^{[k]} \end{matrix}$};
			\draw[->] (n5.west) -- (-6.5,-4.5) -- (-6.5,-1) -- (n6);
			\draw[->] (n6.east) -- (n0);
			\node[format,below of=n3,text width=3cm,yshift=-.4cm](n7){\small Store $\beta^{[k]}$, $\widehat{\YY}_{\tr}(\Theta^{[k]})$, and $\widehat{\YY}_{\te}(\Theta^{[k]})$};;
			\draw[->] (n3.south) -- (n7);
			\draw[->] (n5.east) -- (n7);
			\node[format,below of=n7,yshift=-.5cm,text width=4cm](n8){\small Weighted majority prediction: $\widehat{\YY}_{\tr}$ and $\widehat{\YY}_{\te}$};
			\draw[->] (n7.south) -- (n8);
		\end{tikzpicture}
	\end{center}
	\caption{An Illustration of the Overall Prediction Algorithm with Adaptive Boosting}\label{F:1}
\end{figure}
The procedure begins with an initial set of weights, which can chosen in a variety of ways, e.g., equal weights: $\theta^{[0]}(t,d) \propto 1$ for all $t \in \{1,\hdots,T_{\tr}\}$ and $d \in \DD$, or weights that favor more recent days: $\theta^{[0]}(t,d) \propto d^{-1}$.  (In our experiments, we use the former.)  We then solve the matrix completion problem using Algorithm \ref{A:0} and use the results to calculate the normalized training error, which is given as
\begin{equation}
	\varepsilon^{[k]} = \frac{\sum_{(t,d)} \theta^{[k]}(t,d) \mathbbm{1}\{ \widehat{\mathbf{y}}_{d,\tr}^{[k]}(t) \ne \mathbf{y}_{d,\tr}(t) \}}{\sum_{(t,d)} \theta^{[k]}(t,d)}, \label{eq_error}
\end{equation}
where $\widehat{\mathbf{y}}_{d,\tr}^{[k]}(t)$ is the training output produced by Algorithm \ref{A:0} using the weights determined in iteration $k$, $\Theta^{[k]}$.  This error metric is bounded between 0 and 1, $\varepsilon^{[k]} = 0$ indicates that $\widehat{\mathbf{y}}_{d,\tr}^{[k]}(t) = \mathbf{y}_{d,\tr}(t)$ for all $(t,d)$ pairs and, at the other extreme, $\varepsilon^{[k]} = 1$ indicates that $\widehat{\mathbf{y}}_{d,\tr}^{[k]}(t) = \mathbf{y}_{d,\tr}(t)$ for none of the $(t,d)$ pairs.  The normalized error is used to calculate an \emph{update factor} as follows:
\begin{equation}
	\beta^{[k]} = \log \frac{1 - \varepsilon^{[k]}}{\varepsilon^{[k]}},
	\label{eq_upFactor}
\end{equation}
which is used to update the weight as
\begin{equation}
	\theta^{[k+1]}(t,d) = \theta^{[k]}(t,d) \exp \big(\beta^{[k]} \mathbbm{1}\{ \widehat{\mathbf{y}}_{d,\tr}^{[k]}(t) \ne \mathbf{y}_{d,\tr}(t) \} \big).\label{eq_weightUpdate}
\end{equation}
The logarithm in \eqref{eq_upFactor} is used to mitigate potential computational instabilities due to large values.  As $\varepsilon^{[k]} \rightarrow 0$, $\beta^{[k]} \rightarrow \infty$ and the limit corresponds to the case where $\mathbbm{1}\{ \widehat{\mathbf{y}}_{d,\tr}^{[k]}(t) \ne \mathbf{y}_{d,\tr}(t) \} = 0$ for all $(t,d)$ pairs (i.e., a perfect match) so that the exponential function on the right-hand side of \eqref{eq_weightUpdate} is 1.  This means that in the case of a perfect match, the weights do not change.  On the other hand, as $\varepsilon^{[k]} \rightarrow 1$, $\beta^{[k]} \rightarrow -\infty$ and $\theta^{[k+1]}(t,d) \rightarrow 0$ in this case.

Finally, the test predictions produced over the $K$ iterations are combined to produce the final prediction. Let $\alpha^{[k]}$ denote the weight assigned to the prediction produced in iteration $k$ and let it be defined as follows:
\begin{equation}
	\alpha^{[k]} \equiv \frac{\beta^{[k]}}{\sum_{j = 1}^K \beta^{[j]}}.
\end{equation}
The combined predictions are given by
\begin{equation}
	\widehat{\YY}_{\tr} = \sum_{k} \alpha^{[k]} \widehat{\YY}_{\tr}(\Theta^{[k]})
	\label{eq_weightedMajority_tr}
\end{equation}
and
\begin{equation}
	\widehat{\YY}_{\te} = \sum_{k} \alpha^{[k]} \widehat{\YY}_{\te}(\Theta^{[k]}).
	\label{eq_weightedMajority_te}
\end{equation}
This is followed by a thresholding step to translate the predictions to labels in $\{0,1\}$.  The prediction algorithm is summarized in Algorithm \ref{A:1} below.  Theorem \ref{thm_CR_Boosting} provides a bound on the training error (expressed as the number of mis-matched columns in $\widehat{\YY}_{\tr}$).  The error bound given in the theorem illustrates the fast reduction in error with number of iterations.  The bound we give is a specialization of the well known result in \cite[Theorem 6]{freund1997decision} to our context.

\begin{algorithm}[h!]
	\caption{Prediction with Ensemble Learning}
	\label{A:1}
	\KwData{Joint matrix $\mathbf{Z}(\Theta^{[0]})$ and $K$ (maximum number of iterations)}
	\KwResult{$\widehat{\YY}_{\tr}$ and $\widehat{\YY}_{\te}$}
	
	\textbf{Initialize}: $k \mapsfrom 0$ and initial weights $\Theta^{[0]} \equiv [\Theta^{[0]}_{|\DD|} \cdots \Theta^{[0]}_1]$
	
	\While{$k < K$}{
		$\widetilde{\Phi}_{\tr}(\Theta^{[k]}) \mapsfrom [\Theta_{|\DD|}^{[k]} \odot \Phi(\XX_{|\DD|,\tr})  \cdots \Theta_1^{[k]} \odot \Phi(\XX_{1,\tr}) ]$ 
		
		Calculate $\widehat{\YY}_{\tr}(\Theta^{[k]})$ and $\widehat{\YY}_{\te}(\Theta^{[k]})$ using Algorithm \ref{A:0} 
		
		Calculate $\varepsilon^{[k]}$ and $\beta^{[k]}$ using \eqref{eq_error} and \eqref{eq_upFactor}, respectively 
		
		$k \mapsfrom k + 1$ and update $\Theta^{[k]}$ using \eqref{eq_weightUpdate} 
	}
	
	Calculate the weighted majority predictions using \eqref{eq_weightedMajority_tr}-\eqref{eq_weightedMajority_te} and apply thresholding (Algorithm \ref{A:2}).
\end{algorithm}

\begin{theorem}[Training Error of Algorithm \ref{A:1}]
	\label{thm_CR_Boosting}
	Assume that $\theta^{[0]}(t,d) = 1$ for all $(t,d)$-pairs and let $\boldsymbol{\tau} \in \RR^n$ denote a vector of thresholds.  Let 
	\begin{equation}
		\epsilon(K) \equiv \frac{\big| \{ (t,d): \widehat{\mathbf{y}}_{d,\tr}^{[K]}(t) \ne \mathbf{y}_{d,\tr}(t) \} \big|}{|\DD|T_{\tr}} \label{trainingError}
	\end{equation}
	denote the training sample error at the end of step $K$.  Then
	\begin{equation}
		\epsilon(K) \le 2^K  \prod_{k=0}^K \big(1 - \varepsilon^{[k]}\big)^{\frac{n - \|\boldsymbol{\tau}\|_1}{n}} \big(\varepsilon^{[k]} \big)^{^{\frac{\|\boldsymbol{\tau}\|_1}{n}}}. \label{trainingErrorBound}
	\end{equation}
\end{theorem}

\begin{proof}
Let $(t,d)$ be such that $\mathbbm{1}\{ \widehat{\mathbf{y}}_{d,\tr}(t) \ne  \mathbf{y}_{d,\tr}(t)\} = 1$.  Since $\sum_{k=0}^K \alpha^{[k]} = 1$ we have that $\mathbf{y}_{d,\tr}(t) = \sum_{k=0}^K \alpha^{[k]} \mathbf{y}_{d,\tr}(t)$, thus
\begin{equation}
	\sum_{k=0}^K \alpha^{[k]} \big\| \widehat{\mathbf{y}}_{d,\tr}^{[k]}(t)  - \mathbf{y}_{d,\tr}(t) \big\|_1 \ge  \|\boldsymbol{\tau}\|_1.
\end{equation}
Multiplying both sides by $n^{-1}\sum_{j=0}^K \beta^{[j]}$, we get the inequality
\begin{equation}
	\sum_{k=0}^K \frac{\beta^{[k]}}{n} \big\| \widehat{\mathbf{y}}_{d,\tr}^{[k]}(t)  - \mathbf{y}_{d,\tr}(t) \big\|_1 \ge  \frac{1}{n}\sum_{j=0}^K \beta^{[j]}\|\boldsymbol{\tau}\|_1.
\end{equation}
Since $\mathbbm{1}\{ \widehat{\mathbf{y}}_{d,\tr}^{[k]}(t) \ne \mathbf{y}_{d,\tr}(t) \} \ge n^{-1} \big\| \widehat{\mathbf{y}}_{d,\tr}^{[k]}(t)  - \mathbf{y}_{d,\tr}(t) \big\|_1$ for all $k$, we have that
\begin{equation}
	\sum_{k=0}^K \beta^{[k]} \mathbbm{1}\{ \widehat{\mathbf{y}}_{d,\tr}^{[k]}(t) \ne \mathbf{y}_{d,\tr}(t) \} \ge \frac{1}{n}\sum_{j=0}^K \beta^{[j]}\|\boldsymbol{\tau}\|_1. \label{eq_in1}
\end{equation}
Define $\mathcal{M}^{[k]} \equiv \{(t,d): \widehat{\mathbf{y}}_{d,\tr}^{[k]}(t) \ne \mathbf{y}_{d,\tr}(t)\}$. It follows from \eqref{eq_in1} that
\begin{multline}
	\sum_{(t,d)} \theta^{[K+1]}(t,d) \\
	\ge \sum_{(t,d) \in \mathcal{M}^{[K]}} \theta^{[0]}(t,d) \exp \Big(\sum_{k=0}^K \beta^{[k]} n^{-1} \|\boldsymbol{\tau}\|_1 \Big) \\
	= |\DD|T_{\tr} \epsilon(K) \prod_{k=0}^K \exp(\beta^{[k]} n^{-1} \|\boldsymbol{\tau}\|_1), \label{eq_in2}
\end{multline}
where $|\DD|T_{\tr} \epsilon(K) = \sum_{(t,d) \in \mathcal{M}^{[K]}} \theta^{[0]}(t,d)$ follows from the initialization assumption. Next, we have that
\begin{multline}
	\sum_{(t,d)} \theta^{[K+1]}(t,d) \\ = \sum_{(t,d)} \theta^{[K]}(t,d) \Big(\frac{1-\varepsilon^{[K]}}{\varepsilon^{[K]}}\Big)^{\mathbbm{1}\{ \widehat{\mathbf{y}}_{d,\tr}^{[K]}(t) \ne \mathbf{y}_{d,\tr}(t) \}}
	\\ = \sum_{(t,d)} \theta^{[K]}(t,d) \bigg(1 - \Big(1 - \frac{1-\varepsilon^{[K]}}{\varepsilon^{[K]}}\Big)  \mathbbm{1}\{ \widehat{\mathbf{y}}_{d,\tr}^{[K]}(t) \ne \mathbf{y}_{d,\tr}(t) \} \bigg)
	\\ = \sum_{(t,d)}  \theta^{[K]}(t,d) - \sum_{(t,d)}  \theta^{[K]}(t,d) \varepsilon^{[K]} \Big( 1 - \frac{1-\varepsilon^{[K]}}{\varepsilon^{[K]}} \Big) \\
	= \sum_{(t,d)}  2 \theta^{[K]}(t,d) (1 - \varepsilon^{[K]}),
\end{multline}
where the second to last equality follows from \eqref{eq_error}.  This implies that
\begin{equation}
	\sum_{(t,d)} \theta^{[K+1]}(t,d) = 2^K |\DD|T_{\tr} \prod_{k=0}^K (1 - \varepsilon^{[k]}). \label{eq_in3}
\end{equation}
Combining \eqref{eq_in2} with \eqref{eq_in3} and utilizing the definition of $\beta^{[k]}$, \eqref{eq_upFactor}, completes the proof.
\end{proof}

\subsection{Generalization and Training Sample Complexity}
Theorem~\ref{thm_CR_Boosting} provides a bound on the \emph{in-sample} error.  To study how this generalizes to \emph{out-of-sample} data, we can think of $(\widetilde{\Phi}_{\tr}(\Theta),\widetilde{\YY}_{\tr})$ as random samples of sensor states.  The in-sample error $\epsilon(K)$ is representative of the true error insofar as the sample $(\widetilde{\Phi}_{\tr}(\Theta),\widetilde{\YY}_{\tr})$ is representative of the true underlying phenomenon that generates the data. Let $\epsilon_{\mathrm{true}}$ denote the true error, i.e., over the entire (unknown) data generating process (not just the sample).  A fundamental result by \cite{vapniknature} states that, for any tolerance threshold $0<\delta<1$, we have with probability $1-\delta$ that
\begin{equation}
	\epsilon_{\mathrm{true}} \le \epsilon(K) + \sqrt{ \frac{ \mathsf{d}_{\mathrm{VC}}^{\mathrm{AB}} \big(\log_e(2|\DD|T_{\tr} / \mathsf{d}_{\mathrm{VC}}^{\mathrm{AB}}) + 1\big) - \log_e(\delta / 4) }{|\DD|T_{\tr}} },
\end{equation}
where $\mathsf{d}_{\mathrm{VC}}^{\mathrm{AB}}$ is the \emph{Vapnik-Cheronenkis (VC) dimension} of the set of solutions of the adaptive boosting algorithm (Algorithm~\ref{A:1}).  The second term (under the square root) can be made arbitrarily small by appropriate choice of the ``sample size'' $|\DD|T_{\tr}$ (provided that $\mathsf{d}_{\mathrm{VC}}^{\mathrm{AB}}$ is finite).  This says that the true error $\epsilon_{\mathrm{true}}$ can be made as arbitrarily small as the training error $\epsilon(K)$, which can be made arbitrarily small by increasing $K$ in accord with Theorem~\ref{thm_CR_Boosting}.  A similar bound can be stated for the matrix completion problem providing guidance into the selection of $T_{\tr}$.  Let $\mathsf{d}_{\mathrm{VC}}^{\mathrm{BCD}}$ denote the VC-dimension of the matrix completion problem.  Theorem 8 in \citep{freund1997decision} demonstrates how $\mathsf{d}_{\mathrm{VC}}^{\mathrm{AB}}$ can be determined from $\mathsf{d}_{\mathrm{VC}}^{\mathrm{BCD}}$.  Determination of the VC-dimension of our matrix completion problem, $\mathsf{d}_{\mathrm{VC}}^{\mathrm{BCD}}$, is beyond the scope of this paper.  However, we note that it has been demonstrated that $\mathsf{d}_{\mathrm{VC}}^{\mathrm{BCD}}$ is finite for matrix completion problems that aim to minimize rank; we refer to \citep{srebro2004generalization} for deeper analytical insights into generalization errors for low-rank matrix completion problems.  In our experiments, we compare training and testing errors experimentally using a real-world dataset.

\subsection{Time Complexity Analysis}
In each iteration, the block-coordinate descent algorithm (Algorithm \ref{A:0}) solves four least-squared (LS) problems but the solutions are given in closed form. The complexity of calculating $\widehat{\UU}_{\tr}^{[k]}$, using \eqref{eq_updateUtr}, is $O(r^2T_{\tr})$ (the complexity of matrix multiplication and inversion of a symmetric positive definite matrix).  The complexity of updating $\widehat{\VV}_{\tr}^{[k]}$, using \eqref{eq_updateVtr}, is $O(\max\{r^3,T_{\tr}^2r,T_{\tr}T_{\te}r\})$, which is typically equal to $O(T_{\tr}^2r)$ as $T_{\tr}>T_{\te}>r$ will be the case in most settings. Similarly, updating $\widehat{\VV}_{\te}^{[k]}$ has a time complexity of $O(\max\{r^3,T_{\te}^2r,T_{\tr}T_{\te}r\})$, which is typically $O(T_{\tr}T_{\te}r)$. Therefore, the complexity in each iteration of Algorithm \ref{A:0} is $O(\max\{r^3,T_{\tr}^2r,T_{\te}^2r,T_{\tr}T_{\te}r\}) = O(T_{\tr}^2r)$. Let $K_{\mathrm{BCD}}$ denote a the number of block-coordinate descent iterations, which can be determined a priori in accord with Theorem \ref{thm_SCR}. Then, the overall complexity of Algorithm \ref{A:0} is $O(K_{\mathrm{BCD}}T_{\tr}^2r)$. 

The analysis above implies that the time complexity of a single iteration of Algorithm \ref{A:1} is $O(K_{\mathrm{BCD}}|\DD|^2T_{\tr}^2r)$.  The time complexity of the soft-thresholding algorithm is $O(|\DD|^2T_{\tr}^2n)$.  The training error bound given in Theorem \ref{thm_CR_Boosting} can be used to estimate a number of iterations needed to get the training error to within a pre-specified error bound.  Let $K_{\mathrm{AB}}$ be the number of iterations of Algorithm \ref{A:1} to be performed.  The overall complexity of our proposed approach is then $O(K_{\mathrm{AB}}K_{\mathrm{BCD}}T_{\tr}^2r + |\DD|^2T_{\tr}^2n)$. In most cases, we will have that $r > n$ so that the total complexity can be simply be stated as $O(K_{\mathrm{AB}}K_{\mathrm{BCD}}T_{\tr}^2r)$.  One can further reduce the computational complexity of Algorithm \ref{A:0} by utilizing more efficient matrix multiplication and inversion techniques, our time complexity bounds assume standard matrix algebra techniques are used.

\section{Experimental Results}
\label{S:experiments}
\subsection{Network Descriptions and Performance Metrics} \label{s_networks}
\emph{Simulated Data: } We test the proposed methods using both reproducible toy examples with simulated data and a real-work high-resolution dataset.  For the former, we generated synthetic data using a microscopic simulation model of a hypothetical network of four 4-leg intersections, depicted in Fig.~\ref{F:net}.  The model was developed using the open-source simulator SUMO (Simulation of Urban MObility) \citep{krajzewicz2012recent}. 
\begin{figure}[htbp]
	\centerline{\includegraphics[scale=0.55]{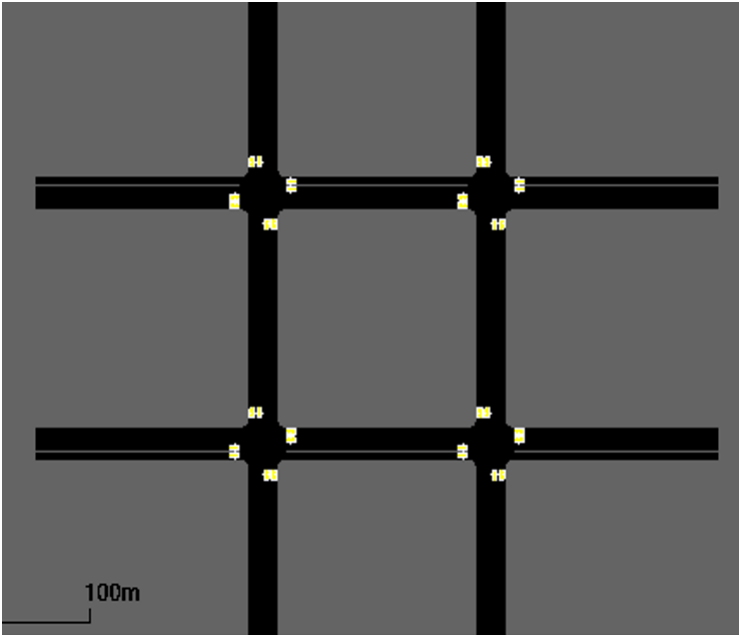}}
	\caption{Layout of the Synthetic Network} \label{F:net}		
\end{figure}
The intersections are operated using a fixed-time controller with a cycle length of 90 seconds.  All approaches are 200 meters long and have two lanes with point sensors capable of recording high-resolution data located at the stop lines. We, thus, have 32 sensors in total. The simulation time horizon is 2 days (172,000 seconds). The average occupancy of these 32 sensors over the two day period is around 15\%, hourly averaged occupancy profiles of two of the sensors are shown in Fig.~\ref{F:occ}.  The experiments and results are presented in Sec.~\ref{ss_SUMO}.
\begin{figure}[h!]
	\centerline{\includegraphics[scale=.80]{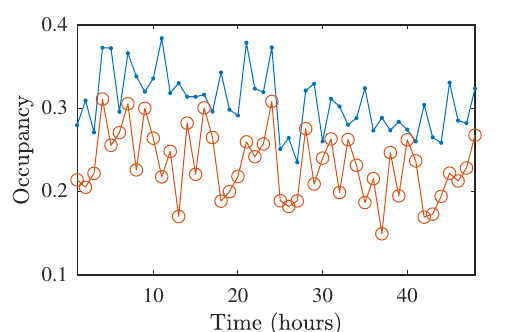}}
	\caption{Occupancy Profiles of Two Sensors} \label{F:occ}		
\end{figure}

\emph{Real-World Dataset:} The second dataset is a real-world dataset provided by the Abu Dhabi Department of Transportation.  The dataset was obtained for Al Zahiyah in downtown Abu Dhabi and consists of two corridors with eleven intersections as shown in Fig.~\ref{F:4}.  
\begin{figure}[h]
	\centerline{\includegraphics[scale=0.25]{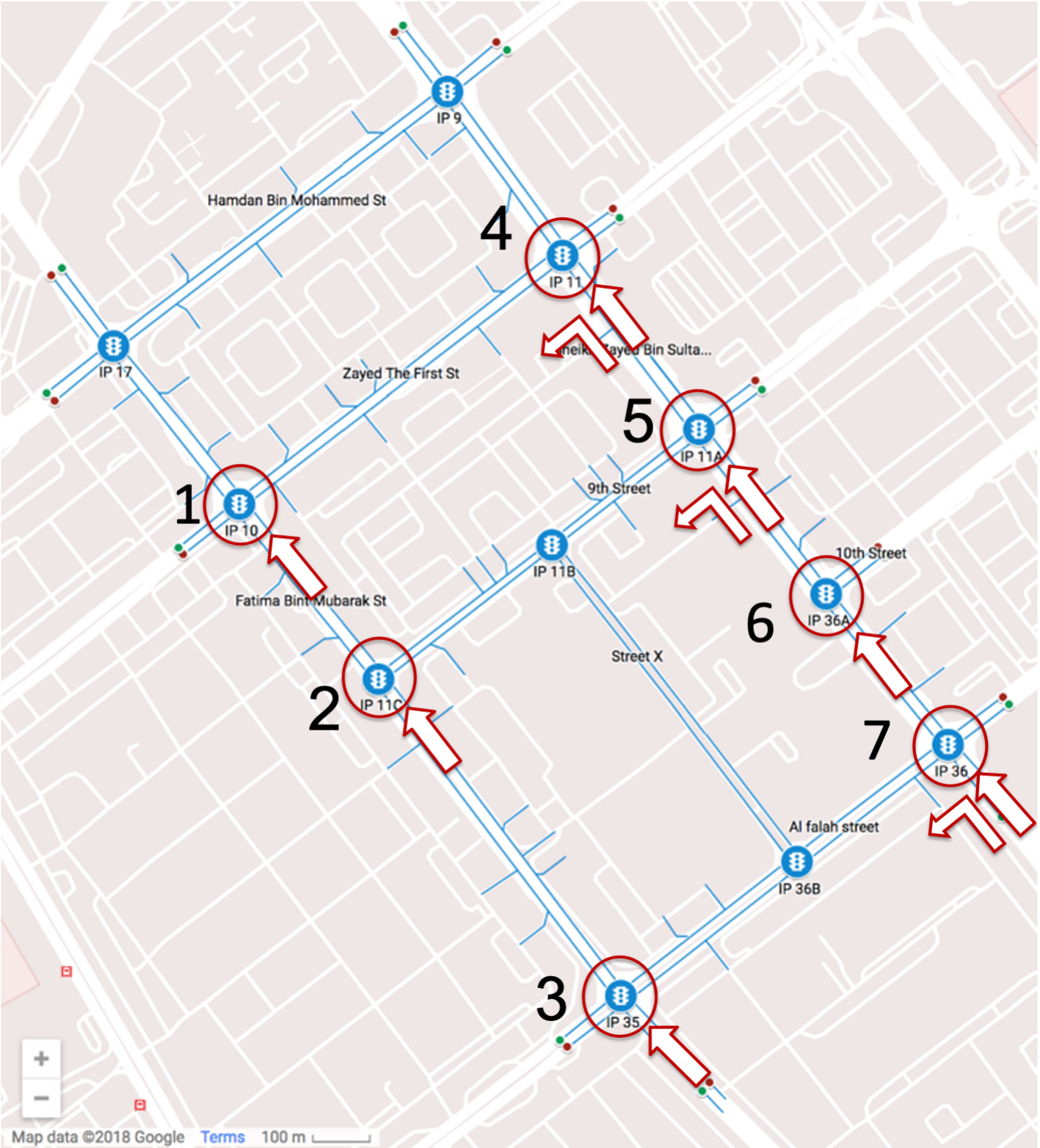}}
	\caption{Layout of the Downtown Abu Dhabi Network}		
	\label{F:4}
\end{figure}
These intersections, currently operated by a commercial adaptive controller, are located in the central business district in the city and are known to have heavy queuing during peak demand periods.  For our experiments, we utilize second-by-second data obtained from point sensors along the directions that are highlighted in the figure for seven adjacent intersections. The sensors used in this study are those that the controller uses to make signal extension decisions. 
Each of the intersections has three or four through lanes in each direction with one sensor in each lane.  Each of the intersections also has two left-turn lanes approaching the stop-line.  The sensor stations used are located nearer the stop-lines of the intersections for intersections 4, 5, and 7, and the datasets for these sensors include the left-turn lanes in our experiments (as illustrated in Fig.~\ref{F:4}).  The total number of sensors in these experiments is 31. A schematic of one of the intersections is provided in Fig.~\ref{F:IP11}.  The sensors used are those in the advanced position (not the stop-line sensors). 
\begin{figure}[h]
	\centerline{\includegraphics[scale=0.65]{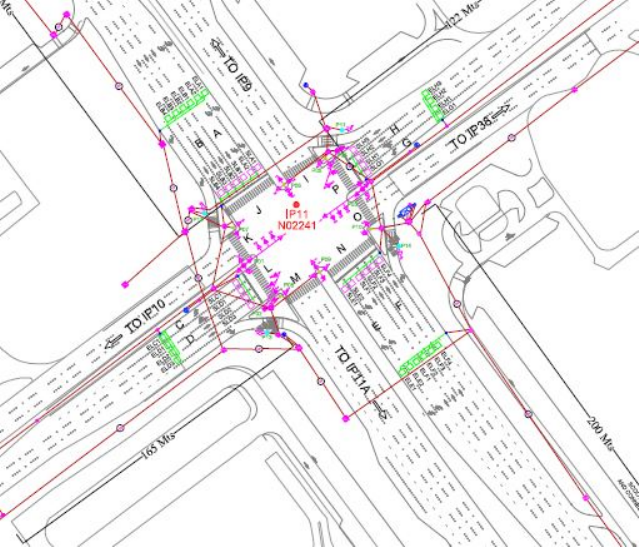}}
	\caption{A Schematic of One of the Intersections Used in the Experiment}
	\label{F:IP11}		
\end{figure}

\textit{Performance Metrics}: We employ two performance metrics in our comparisons, the first is a traditional mean absolute error (MAE):
\begin{equation}
	\epsilon_{\mathrm{MAE}} = \frac{1}{nT_{\te}} \sum_{t=1}^{T_{\te}} \big\| \widehat{\boldsymbol{y}}_{\te}(t) - \boldsymbol{y}_{\te}(t) \big\|_1,
\end{equation}
where $\widehat{\boldsymbol{y}}_{\te}(t)$ and $\boldsymbol{y}_{\te}(t)$ are columns $t \in \{1,\hdots,T_{\te}\}$ of $\widehat{\YY}_{\te}$ and $\YY_{\te}$, respectively.  The second metric is the Skorokhod $\mathrm{M}_1$ metric:
\begin{equation}
	d_{\mathrm{M}_1}(\widehat{\YY}_{\te},\YY_{\te})\equiv \underset{\substack{(\widehat{Y},\widehat{y}) \in \Pi(\widehat{\YY}_{\te}), \\ (Y,y) \in \Pi(\YY_{\te})}}{\textrm{inf}} \max \big\{\| \widehat{Y} - Y \|_{\infty}, \| \widehat{y} - y \|_{\infty} \big\},
	\label{eq:21}
\end{equation}
where $\|\cdot\|_{\infty}$ is the uniform norm and $\Pi(\YY)$ is the set of parametric representations of the rows of  $\YY$. The Skorokhod $\mathrm{M}_1$ metric is a particularly suitable metric for processes with jumps, more specifically, it allows for comparisons between processes with jumps and those free of jumps.  We refer to \citep{whitt2002stochastic} for a more detailed description of the metric.  In our context, it is chosen for its ability to compare high-resolution signals. 

Finally, we set $\mu=0.01$ unless otherwise specified and run all tests on a 2.7 GHz Intel Core i7 Processor with 16 GB of RAM.

\subsection{Small Illustrative Example}
\label{smallex}
Our first example utilizes a single sensor station form the real-world dataset to illustrate the proposed techniques.  The three sensors located at intersection 2 in Fig.~\ref{F:4} are used in this example, and we denote the occupancies of the three sensors at time $t$ by $s_1(t)$, $s_2(t)$, and $s_3(t)$ for lanes 1, 2, and 3, respectively.  We set the lag to $L=3$ seconds and the prediction horizon to $H=2$ seconds.  We wish to predict the occupancies of sensor 2 in time steps $t+12, \hdots,t+16$ utilizing past data from all three sensors.  The structure of the joint matrix $\ZZ$ in \eqref{eq_Z1} (without kernels) for this problem is
\begin{widetext}
\begin{equation}
	\ZZ = \begin{bmatrix}
		\YY_{\tr} = \begin{bmatrix} s_2(t+2)&\cdots & s_2(t+11) \end{bmatrix} & \YY_{\te}= \begin{bmatrix}s_2(t+12)&\cdots & s_2(t+16)  \end{bmatrix}\\
		\XX_{\tr} = \begin{bmatrix} s_3(t)&\cdots & s_3(t+9) \\  s_2(t)&\cdots & s_2(t+9) \\s_1(t)&\cdots & s_1(t+9) \\
			s_3(t-1)&\cdots & s_3(t+8) \\  s_2(t-1)&\cdots & s_2(t+8) \\s_1(t-1)&\cdots & s_1(t+8)\\
			s_3(t-2)&\cdots & s_3(t+7) \\  s_2(t-2)&\cdots & s_2(t+7) \\s_1(t-2)&\cdots & s_1(t+7)\end{bmatrix} 
		& \XX_{\te} = \begin{bmatrix} s_3(t+10)&\cdots & s_3(t+14) \\  s_2(t+10)&\cdots & s_2(t+14) \\s_1(t+10)&\cdots & s_1(t+14) \\
			s_3(t+9)&\cdots & s_3(t+14) \\  s_2(t+9)&\cdots & s_2(t+13) \\s_1(t+9)&\cdots & s_1(t+13)\\
			s_3(t+8)&\cdots & s_3(t+12) \\  s_2(t+8)&\cdots & s_2(t+12) \\s_1(t+8)&\cdots & s_1(t+12)\end{bmatrix}\\
	\end{bmatrix}.
	\label{eq_Smallex}
\end{equation}
\end{widetext}
The entries of this matrix are provided in Fig.~\ref{fig_Smallex}, and color coded according to whether the occupancy is 0 or 1.  
\begin{figure}[h!]
	\centerline{\includegraphics[width=0.45\textwidth]{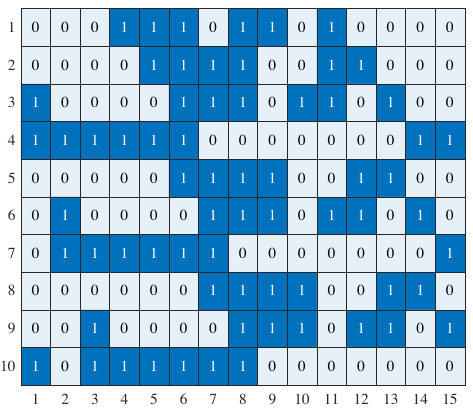}}
	\caption{Entries of $\ZZ$ for the Small Illustrative Example}
	\label{fig_Smallex}		
\end{figure}
Note the difference in the occupancy profiles corresponding to sensors 1 and 2 ($s_1$ and $s_2$) from times $t-1$ to $t+13$ (rows 6 and 7 in Fig.~\ref{fig_Smallex}). Over the entire 15 second period, the aggregated occupancies are the same but the traffic patters are different.

To solve the prediction problem, we set $\mu \equiv 10^{-6}$ and $r=1$ and list the training results of the (unkernalized) problem without adaptive boosting in Table~\ref{t_PROI} and those with adaptive boosting in Table~\ref{t_boost_te}.  The corresponding testing results are summarized in Tables \ref{t_PROI_b} and \ref{t_boost_tr}, respectively. The convergence criterion used is $\epsilon^{\mathrm{rel}}(k) < 10^{-4}$, where $\epsilon^{\mathrm{rel}}(k)$ is the relative error in iteration $k$, calculated as
\begin{multline}
	\epsilon^{\mathrm{rel}}(k) \equiv \max \bigg\{ \frac{\|\UU_{\tr}^{[k]} - \UU_{\tr}^{[k-1]}\|_{\FF}}{\| \UU_{\tr}^{[k-1]} \|_{\FF}}, \frac{\|\VV_{\tr}^{[k]} - \VV_{\tr}^{[k-1]}\|_{\FF}}{\| \VV_{\tr}^{[k-1]}  \|_{\FF}} \\, \frac{\|\VV_{\te}^{[k]} - \VV_{\te}^{[k-1]}\|_{\FF}}{\| \VV_{\te}^{[k-1]} \|_{\FF}} \bigg\}.
\end{multline}

\begin{table*}[h!]
	\caption{Training Results Without Adaptive Boosting (G.T. = Ground Truth)}
	\small
	\centering
	\begin{tabular}{|c|c|cccccccccc|}
		\hline
		& $k$ & $t+2$ & $t+3$ & $t+4$ & $t+5$ & $t+6$ & $t+7$ & $t+8$ & $t+9$ & $t+10$ & $t+11$ \\ \hline 
		\multirow{4}{*}{$\widehat{\YY}_{\tr}^{[k]}$} & 1 &  -0.0915 &   0.0884   & 0.1208  &  0.8264   & 0.8682 &   0.7644 &  -0.1001  &  0.5218  &  0.8142 &   0.0580
		\\
		& 5&  0.0209   & 0.0535  &  0.1454 &   1.0242   & 1.0542  &  0.9922  &  0.1287  &  0.9451  &  0.9304   & 0.0597  \\
		& 10 &    0.0208  &  0.0531  &  0.1439 &   1.0224   & 1.0520   & 0.9908   & 0.1278  &  0.9452   & 0.9330    &0.0662
		\\
		& end &0.0205&    0.0523&    0.1421 &   {1.0218} &   {1.0513}  &  {0.9908}   & 0.1261  &  {0.9464}  & { 0.9354} &   0.0686
		\\ \hline
		&G.T.&     0   & 0   & 0  & 1   & 1  &  1  &  0  & 1  &  1 &  0\\
		\hline
	\end{tabular}
	\label{t_PROI}
\end{table*}

\begin{table*}[ht!]
	\caption{Training Results With Adaptive Boosting (A.T. = After Thresholding, G.T. = Ground Truth)}
	\small
	\centering
	\begin{tabular}{|c|c|cccccccccc|}
		\hline
		& $k$ & $t+2$ & $t+3$ & $t+4$ & $t+5$ & $t+6$ & $t+7$ & $t+8$ & $t+9$ & $t+10$ & $t+11$ \\ \hline 
		\multirow{4}{*}{$\widehat{\YY}_{\tr}(\Theta^{[k]})$} & 1 &  0.0205&    0.0523&    0.1421 &   1.0218 &   1.0513 &  0.9908  & 0.1261  &  0.9464  & 0.9354 &   0.0686
		\\
		& 2&  0.0222&    0.0568 &   0.1545&   1.0192 & 1.0482 & 0.9865 &    0.1408&    0.9469 &    0.9336 &    0.0765  \\
		& 3 &    0.0247  &  0.0635  &  0.1655   & 1.0136  &  1.0432   & 0.9798  &  0.1804  &  0.9486  &  0.9324   & 0.0900
		\\
		& 4 & 0.0272  &  0.0711&    0.1670  &  1.0045  &  1.0371    &0.9715  &  0.2112  &  0.9527  &  0.9349  &  0.1104
		\\ \hline
		$\widehat{\YY}_{\tr}$ & \eqref{eq_weightedMajority_tr} & 0.0229   & 0.0587   & 0.1531   & 1.0169   & 1.0466  &  0.9844  &  0.1581   & 0.9479  &  0.9341 &   0.0814
		\\ \hline
		&A.T.&     0   & 0   & 0  & 1   & 1  &  1  &  0  & 1  &  1 &  0\\
		\hline
		&G.T.&     0   & 0   & 0  & 1   & 1  &  1  &  0  & 1  &  1 &  0\\
		\hline
	\end{tabular}
	\label{t_boost_te}
\end{table*}

\begin{table}[h!]
	\caption{Testing Results Without Adaptive Boosting (A.T. = After Thresholding, G.T. = Ground Truth)}
	\small
	\centering
	\begin{tabular}{|c|c|ccccc|}
		\hline
		& $k$ & $t+12$ & $t+13$ & $t+14$ & $t+15$ & $t+16$ \\ \hline 
		\multirow{4}{*}{$\widehat{\YY}_{\te}^{[k]}$} & 1 & 0.4694 & 0.4454 & 0.4194 & 0.2675 & 0.2434
		\\
		& 5& 0.1098 & 0.1289 & 0.1143 & 0.0635 & 0.0770  \\
		& 10 & 0.2360 & 0.2351  & 0.1254  & 0.0651 & 0.0391
		\\
		& end & 0.2613 & 0.2626 & 0.1418 & 0.0730 & 0.0407
		\\ \hline
		&A.T. & 1   & 1  &  0  & 0  &  0
		\\ \hline
		&G.T.  & 1   & 0 &  0  & 0  &  0 
		\\ \hline
	\end{tabular}
	\label{t_PROI_b}
\end{table}

Note from Table~\ref{t_PROI} that beyond iteration $k=5$, changes in the results are small.  This illustrates the fast convergence rate of the algorithm. Convergence of the Block-Coordinate Descent (BCD) algorithm is illustrated in Fig.~\ref{f_conv1} for this example.
\begin{figure}[htbp]
	\centerline{\includegraphics[scale=0.8]{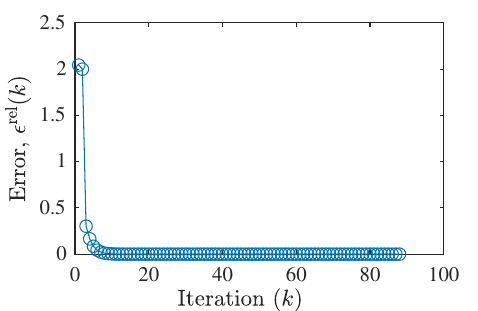}}
	\caption{Convergence of BCD}
	\label{f_conv1}		
\end{figure}
We also see that the algorithm misclassified the sensor state at time $t+13$ in the testing sample in Table~\ref{t_PROI_b}.  The boosting procedure corrects this as depicted in Tables \ref{t_boost_te} and \ref{t_boost_tr} below.  Note that the iterations in the tables below are the \emph{outer} iterations of the boosting procedure: the first outer iteration produces the results observed above (without boosting).

\begin{table}[h!]
	\caption{Testing Results With Adaptive Boosting (A.T. = After Thresholding, G.T. = Ground Truth)}
	\small
	\centering
	\begin{tabular}{|c|c|ccccc|}
		\hline
		& $k$ & $t+12$ & $t+13$ & $t+14$ & $t+15$ & $t+16$ \\ \hline 
		\multirow{4}{*}{$\widehat{\YY}_{\te}^{[k]}$} & 1 & 0.2613 & 0.2626 & 0.1418  & 0.0730 & 0.0407
		\\
		& 2 &0.2583 & 0.2472 & 0.1293  & 0.0639 & 0.0345  \\
		& 3 & 0.2546 & 0.2246 & 0.1120  & 0.0522 & 0.0268
		\\
		& 4 & 0.2609 & 0.2068 & 0.0984 & 0.0437 & 0.0211
		\\ \hline
		$\widehat{\YY}_{\te}$ & \eqref{eq_weightedMajority_te} & 0.1781 & 0.1597 & 0.0811  & 0.0389 & 0.0204
		\\ \hline
		&A.T. & 1   & 0  &  0  & 0  &  0
		\\ \hline
		&G.T.  & 1   & 0 &  0  & 0  &  0 
		\\ \hline
	\end{tabular}
	\label{t_boost_tr}
\end{table}

\subsection{Simulation Experiments}
\label{ss_SUMO}
\textit{Description of Experiments}: We test the proposed method using the simulated data described in Sec.~\ref{s_networks}.  Specifically, we test the approach under varying horizons and lags, $H \in \{1,10,60,120\}$ and $L \in \{10,30,60,120\}$ seconds. The historical data used for boosting consists of $|\DD| = 2$ consecutive days in each experiment, and the number of training and testing time steps are $T_{\tr} = 540$ and $T_{\te} = 60$ seconds, respectively.

\textit{Impact of Choice of Lag ($L$) and Horizon ($H$)}: Since all entries in both $\widehat{\YY}_{\te}$ and $\YY_{\te}$ are binary, we have that $\epsilon_{\mathrm{MAE}} \in [0,1]$ and $d_{\mathrm{M}_1}(\widehat{\YY}_{\te},\YY_{\te}) \in [0,1]$.  We can hence measure \emph{accuracy} using $1- \epsilon_{\mathrm{MAE}}$ and $1 - d_{\mathrm{M}_1}$, where we have dropped the arguments from the latter where no confusion may arise.  Mean accuracy and standard deviations (calculated over the 32 sensors) are summarized in  Table~\ref{table_2} and Table~\ref{table_3} using different lags and prediction horizons.  The entries in bold are those with the highest accuracy for each prediction horizon.  
\begin{table*}[h!]
	\caption{Testing Accuracy Using $\mathrm{MAE}$ ( $1 - \epsilon_{\mathrm{MAE}}: \mathrm{Mean}\pm \mathrm{Std}$)}
	\small
	\centering
	\begin{tabular}{|c|ccc|}
		\hline
		$|\DD| \times L$ & $H=1$  & $H=10$ & $H=60$  
		\\%[1.6ex 
		\hline
		{$2\times30$}             &  {0.9395} $\pm$ {0.0340}        & {0.9317} $\pm$ {0.0304}   & {0.9268} $\pm$ {0.0268}    %& 27.1
		\\
		$2\times60$ &\textbf{0.9522} $\pm$ \textbf{0.0211} &\textbf{0.9378} $\pm$ \textbf{0.0313} & \textbf{0.9325} $\pm$ \textbf{0.0125}%& 49.3
		\\
		$2\times120$&0.9448 $\pm$ 0.0311&0.9319 $\pm$ 0.0455&0.9178 $\pm$ 0.0761%&91.7
		\\
		\hline
	\end{tabular}
	\label{table_2}
\end{table*}
\begin{table*}[h!]
	\caption{Testing Accuracy Using the Skorokhod $\mathrm{M}_1$ Metric ($1 - d_{\mathrm{M}_1}: \mathrm{Mean} \pm \mathrm{Std}$)}
	\small
	\centering
	\begin{tabular}{|c|ccc|}
		\hline 
		$|\DD| \times L$ & $H=1$  & $H=10$ & $H=60$ 
		\\
		\hline
		{$2\times30$}             &  \textbf{0.9557} $\pm$ \textbf{0.0520}        & {0.9259} $\pm$ {0.0429}   & {0.9111} $\pm$ {0.0457}   
		\\
		$2\times60$ &0.9464 $\pm$ {0.0385} &\textbf{0.9378} $\pm$\textbf{0.0313} & \textbf{0.9269} $\pm$ \textbf{0.0381}
		\\
		$2\times120$&0.9387 $\pm$ 0.0401&0.9292 $\pm$ 0.0349&0.9155 $\pm$ 0.0977
		\\
		\hline
	\end{tabular}
	\label{table_3}
\end{table*}
\begin{table*}[h!]
	\caption{Paired $t$-test for Different Lags and Horizons (Significance Level = 0.05)}
	\small
	\centering
	\begin{tabular}{|c|c|ccc|}
		\hline
		& $(H,|\DD| \times L)$    & $(1,2 \times30)$   & $(1,2 \times60)$ & $(1,2 \times120)$ %& runtime
		\\
		\hline
		\multirow{3}{*}{$(H,|\DD| \times L)$}& $(1,2 \times30)$               &0  &1 & 1
		\\
		& $(1,2 \times60)$      & 1 &0 & 1
		\\
		& $(1,2 \times120)$     &1  &1 & 0
		\\
		\hline
		& $(H,|\DD| \times L)$  & $(1,2 \times30)$  & $(10,2 \times30)$   & $(60,2 \times30)$  
		\\
		\hline
		\multirow{3}{*}{$(H,|\DD| \times L)$} & $(1,2 \times30)$            &    0     & 1     &1  
		\\
		& $(10,2 \times30)$            & 1       & 0 & 1 
		\\
		& $(60,2 \times30)$  &     1   & 1 &0 
		\\
		\hline
	\end{tabular}
	\label{table_ttest}
\end{table*}

The average testing accuracy in Table~\ref{table_2} is no lower than 91.78\% and reaches 95.22\%, while the greatest standard deviation does not exceed 0.0761. The worst case prediction accuracy (when $|\DD| \times L \times H = 2 \times 120 \times 60$) is greater than 76.9\% with probability 97.5\% (calculated as 0.9178 - 1.96$\times$0.0761 corresponding to an accuracy that is 1.96 standard deviations below the mean).  In other words, 97.5\% of the cases have an accuracy that exceeds 76.9\%.  However, with a proper choice of the lag parameter (in this case $L = 60$ seconds), the worst case accuracy is greater than 87.6\% with probability 97.5\% (when $|\DD| \times L \times H = 2 \times 60 \times 10$). 
We see the same results when measuring accuracy using $1 - d_{\mathrm{M}_1}$ in Table~\ref{table_3}.  The worst case prediction accuracy (when $|\DD| \times L \times H = 2 \times 120 \times 60$) is greater than 72.4\% with probability 97.5\% but with a proper choice of the lag parameter, the worse case accuracy is greater than 85.2\% with probability 97.5\%.
For both accuracy metrics used in Tables \ref{table_2} and \ref{table_3}, we observe that the accuracy tends to decrease as $H$ gets larger.  However, increasing the lag $L$ is not observed to improve the accuracy, a lag of $L = 60$ seconds seems to be the best choice in our experiment.

We further perform paired $t$-tests for different lags and horizons to check whether $H$ and $L$ play a significant role in the resulting predictions.  We summarize the results in Table~\ref{table_ttest}, where $H$ is fixed and $L$ is varied in the top part of the table, and $H$ is varied and $L$ is fixed in the bottom part of the table.  
The test result is either 1 or 0 indicating whether a hypothesis that the two models being compared are the same can be rejected (1) or not (0) at a 0.05 significance level.  The results consistently suggest that the difference is significant in all cases when either $L$ is varied or $H$ is.

We provide an illustration of the resulting predictions for all 32 sensors in Figures \ref{ex} and \ref{npeak}, for a high-demand scenario and a low demand scenario, respectively.  
\begin{figure}[h!]
	\centering
	\subfigure[Results Before Thresholding]{\includegraphics[width=0.4\textwidth]{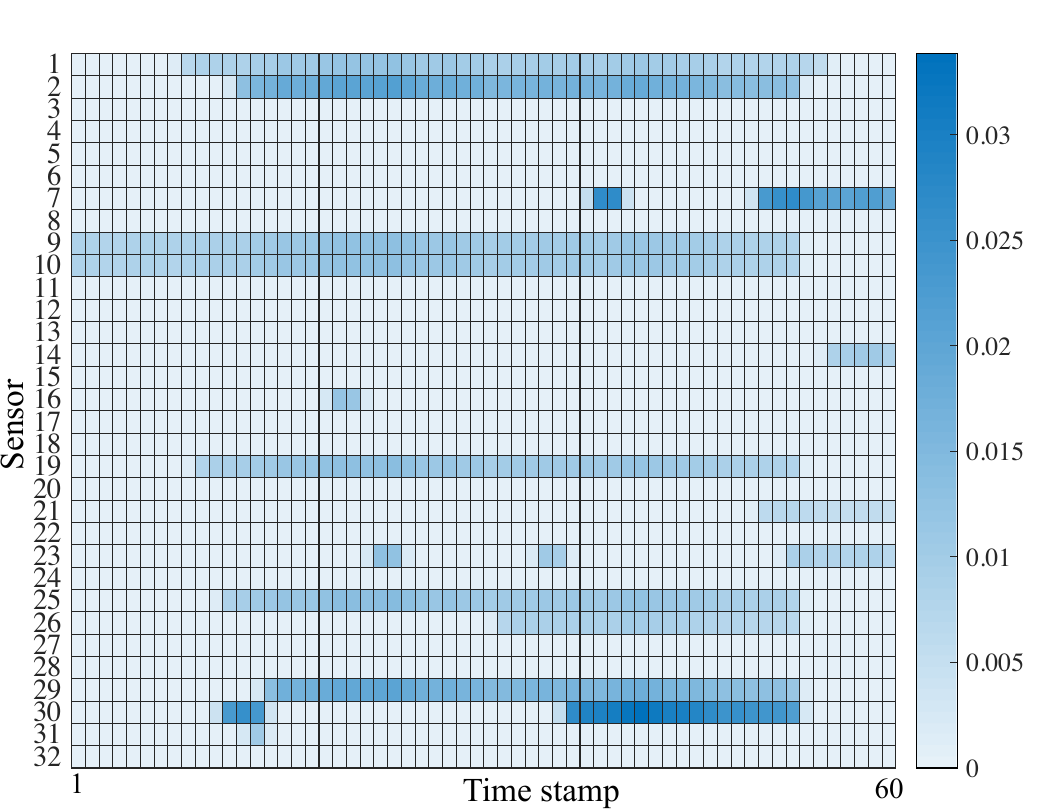}\label{ex1}}
	\subfigure[Results After Thresholding]{\includegraphics[width=0.4\textwidth]{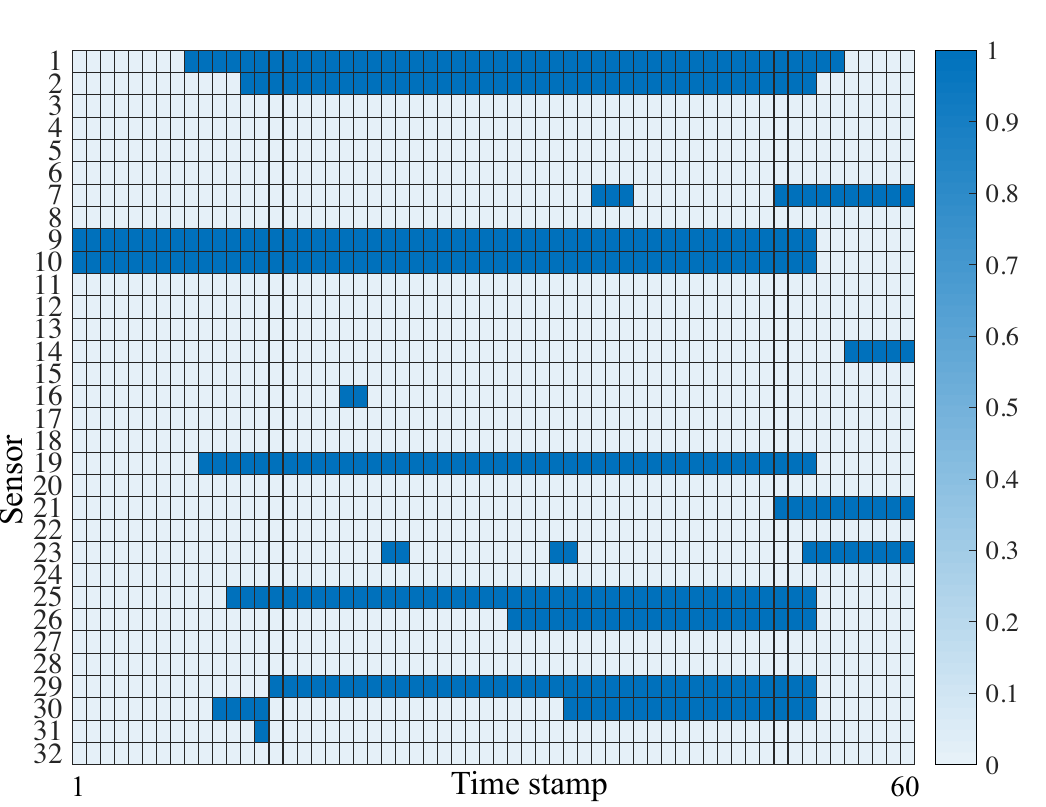}\label{ex2}}
	\subfigure[Ground Truth]{\includegraphics[width=0.4\textwidth]{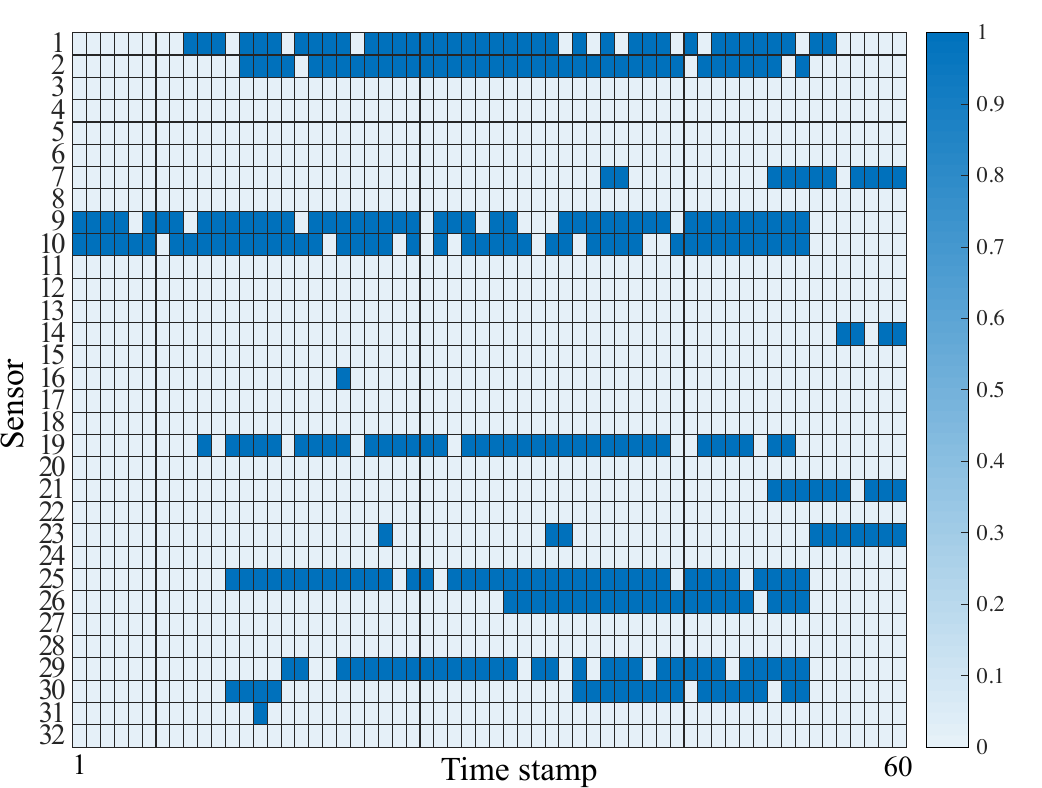}\label{ex3}}
	\caption{Example Comparison Under High Demands}
	\label{ex}		
\end{figure}
\begin{figure}[h!]
	\centering
	\subfigure[Results Before Thresholding]{\includegraphics[width=0.4\textwidth]{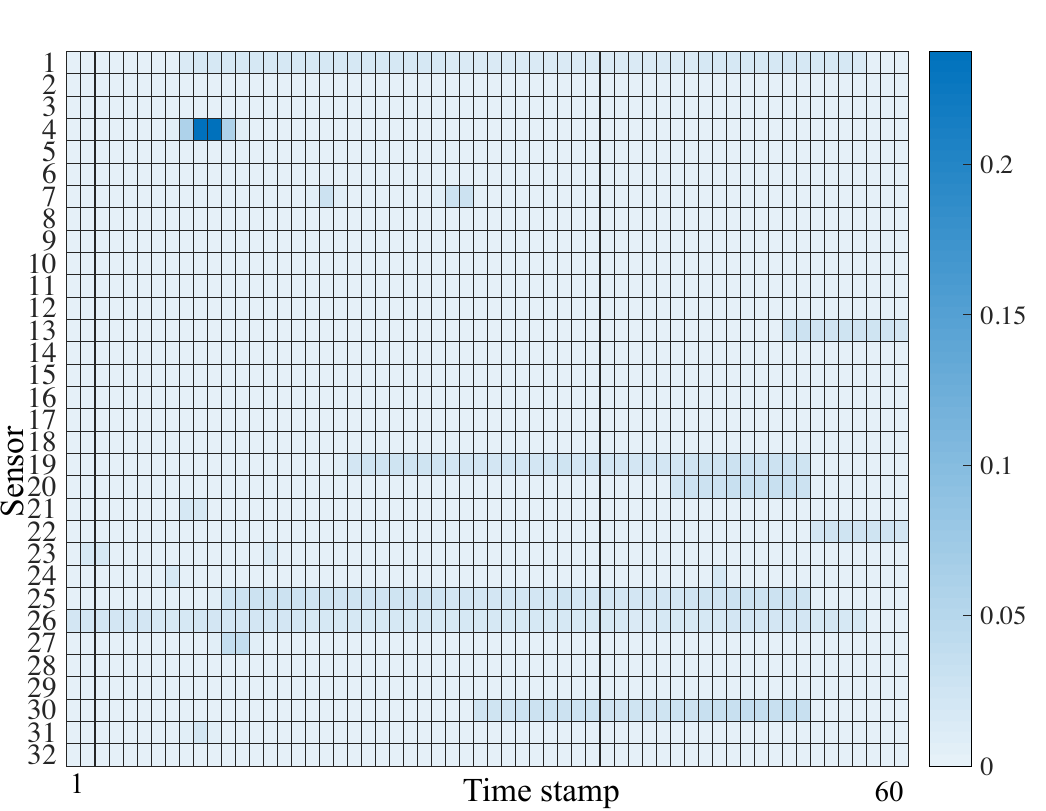}\label{np1}}
	\subfigure[Results After Thresholding]{\includegraphics[width=0.4\textwidth]{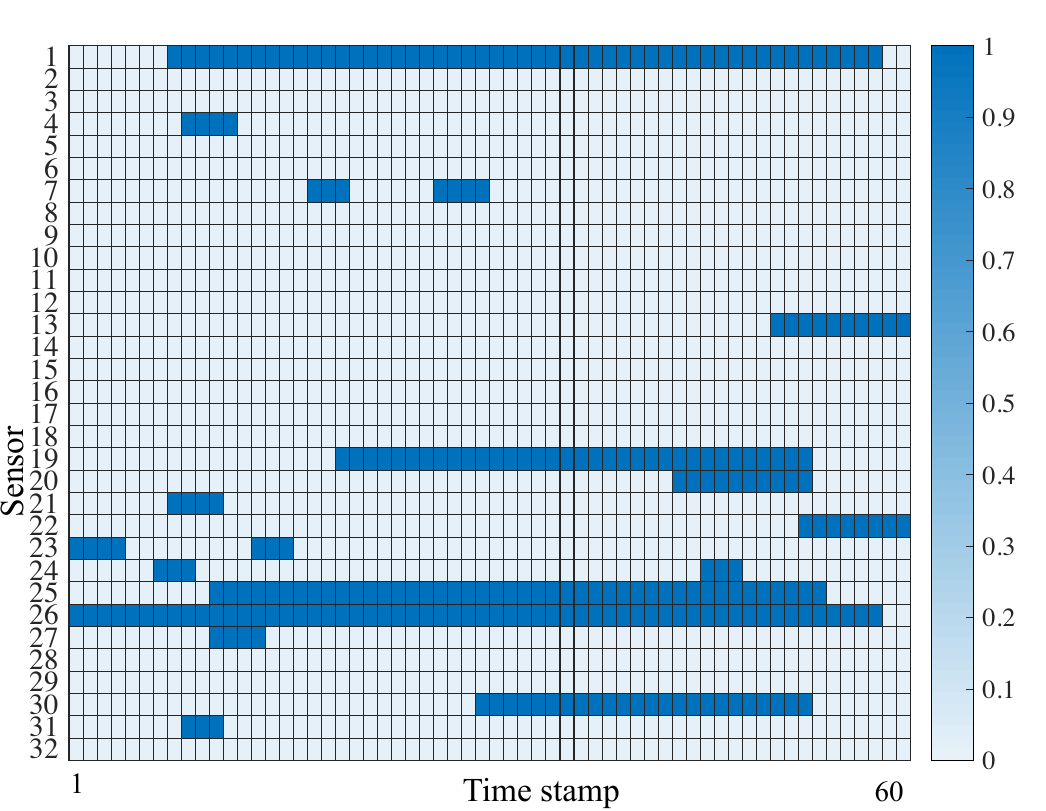}\label{np2}}
	\subfigure[Ground Truth]{\includegraphics[width=0.4\textwidth]{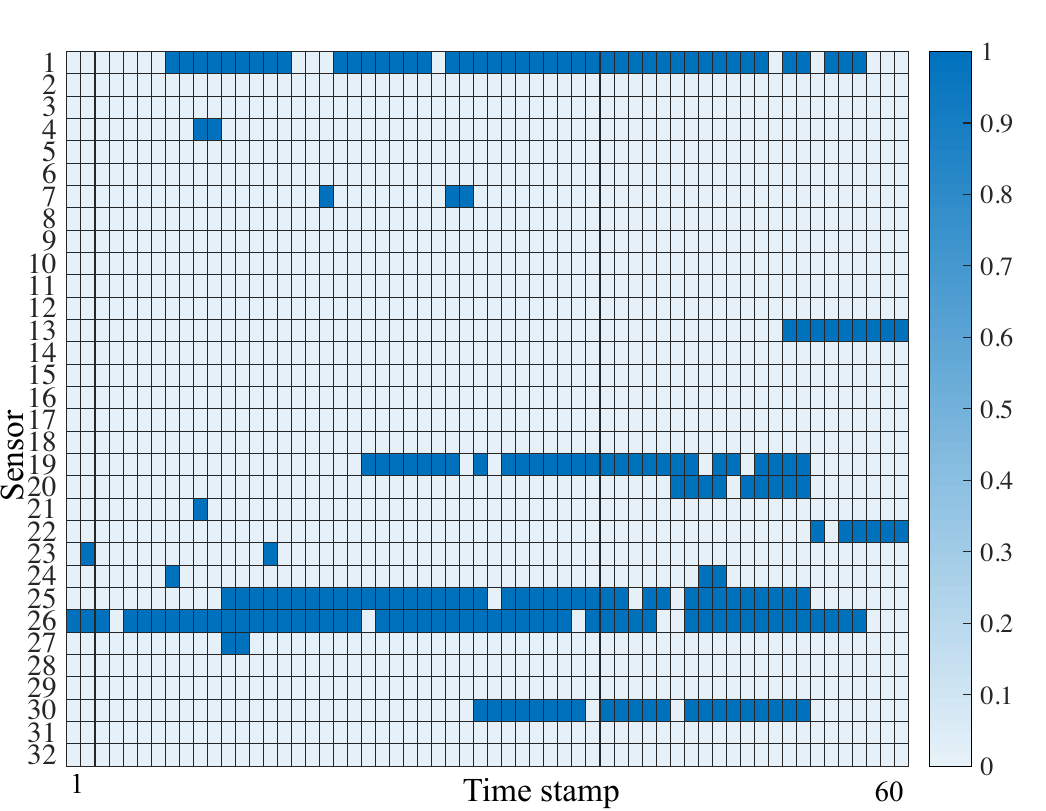}\label{np3}}
	\caption{Example Comparison Under Low Demands}
	\label{npeak}		
\end{figure}
These are the resulting 10 second forecasts produced over a 60-second horizon. The figures illustrate the results before thresholding, after thresholding, and include the ground truth for comparison.  These figures provide an illustration of how our method captures the sensor state dynamics.

\textit{Impact of Rank Parameter ($r$) and Algorithm Convergence}: We next investigate the performance of our approach for different choices of the input $r$ in \eqref{eq_RMCP_SVD_0} and \eqref{eq_KMCP}.  We do so for the case $L = 60$ and $H = 10$ seconds.  The objective values achieved when the algorithm converges for different values of $r$ are depicted in Fig.~\ref{conver}.  We see that the lowest objective value is achieved when $r = 60$ indicating a highly sparse matrix (an order of magnitude smaller than both $nh \sim |\DD|Ln = 3,840$ and $|\DD|T_{\tr} + T_{\te} = 1,140$). Fig.~\ref{conv2} further illustrates the sublinear convergence rate when $r=15$.

\begin{figure}[htbp]
	\centerline{\includegraphics[scale=0.8]{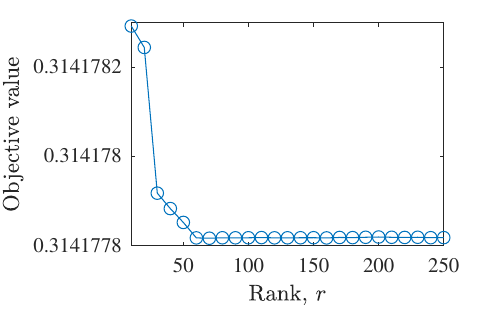}}
	\caption{Relationship Between $r$ and Objective Function Value at Convergence}
	\label{conver}		
\end{figure}

\begin{figure}[htbp]
	\centerline{\includegraphics[scale=0.85]{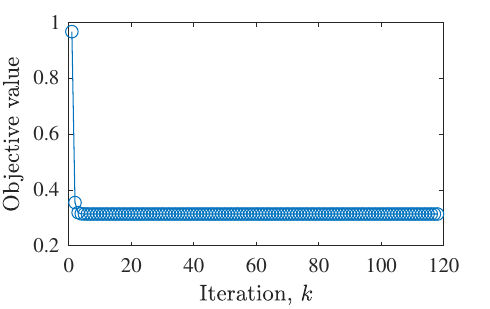}}
	\caption{Sublinear Convergence Rate}
	\label{conv2}		
\end{figure}

\textit{Comparison with Other Techniques}: We benchmark our approach against the following prediction techniques:
\begin{enumerate}
	\item Time series following generalized linear models (TS-GLM): We use the framework described in \citep{fokianos2004partial} to generalize a linear vector auto-regressive time series to binary data. We implement the method using the MATLAB implementation of the \textsf{glmnet} package\footnote{Available at: \url{http://www.stanford.edu/\~hastie/glmnet_matlab/}}\citep{friedman2010regularization}, using the \textsf{binomial} family option, used to perform logistic regression, i.e., employing a logit link function for binary outcomes.  The \textsf{glmnet} package solves a regularized problem with a penalty parameter ($\lambda$).  In our experiments, we fine-tune $\lambda$ by means of a grid search. Finally, we employ thresholding with an 0.5 cutoff to translate the outcomes to 0s and 1s (typical for logistic regression).
	\item Support vector regression (SVR): We choose the SVR model with the standard RBF kernels for comparison. The parameters in SVR, i.e., the error penalty parameter ($C$) and the margin bound parameter ($\epsilon$) are tuned during the training phase by means of a grid search. We also employ a 0.5 cutoff to translate the outputs into binary 0-1 variables.  We utilize the package \textsf{sklearn} in python for implementation.
	
	\item Recurrent neural networks (RNN): RNNs are powerful tools for time series. In our experiments, we employ a long short-term memory (LSTM) architecture \citep{hochreiter1997long}, similar to the neural network architecture used in \citep{mackenzie2018evaluation} to predict traffic flows; we use four layers (input, LSTM layer, fully-connected layer and output). The parameters, i.e., number of hidden units ($N_{\mathrm{H}}$) and learning rate ($\alpha$), are also fine-tuned using a grid search.
\end{enumerate}	

Moreover, for all the benchmark algorithms, the lag and horizon parameters, $L$ and $H$, used in the proposed algorithm are optimized (using a grid search).  The best performance is obtained at $L=60$ and $H=1$. We summarize the selected parameters of these three techniques in Table~\ref{table_p}
\begin{table}[h!]
	\caption{Parameter Settings of Three Benchmark Algorithms.}
	\small
	\centering
	\begin{tabular}{|c|c|}
		\hline
		Method & Parameters
		\\ \hline
		TS-GLM & \textsf{binomial}, \textsf{10-fold} $\lambda$=`$\lambda_{1se}$',  $L=60$, $H=1$  \\
		SVR & $C=0.5$, $\epsilon=0.01$,  $L=60$, $H=1$ \\
		RNN & $N_{\mathrm{H}}=50$, $\alpha=0.1$, $L=60$, $H=1$ \\
		\hline
	\end{tabular}
	\label{table_p}
\end{table}
, and we illustrate the performance of the three benchmark algorithms under varying parameter settings in Fig.~\ref{F:comparison}.
\begin{figure}[h!]
	\centering
	\subfigure[TS-GLM]{\includegraphics[scale=0.4]{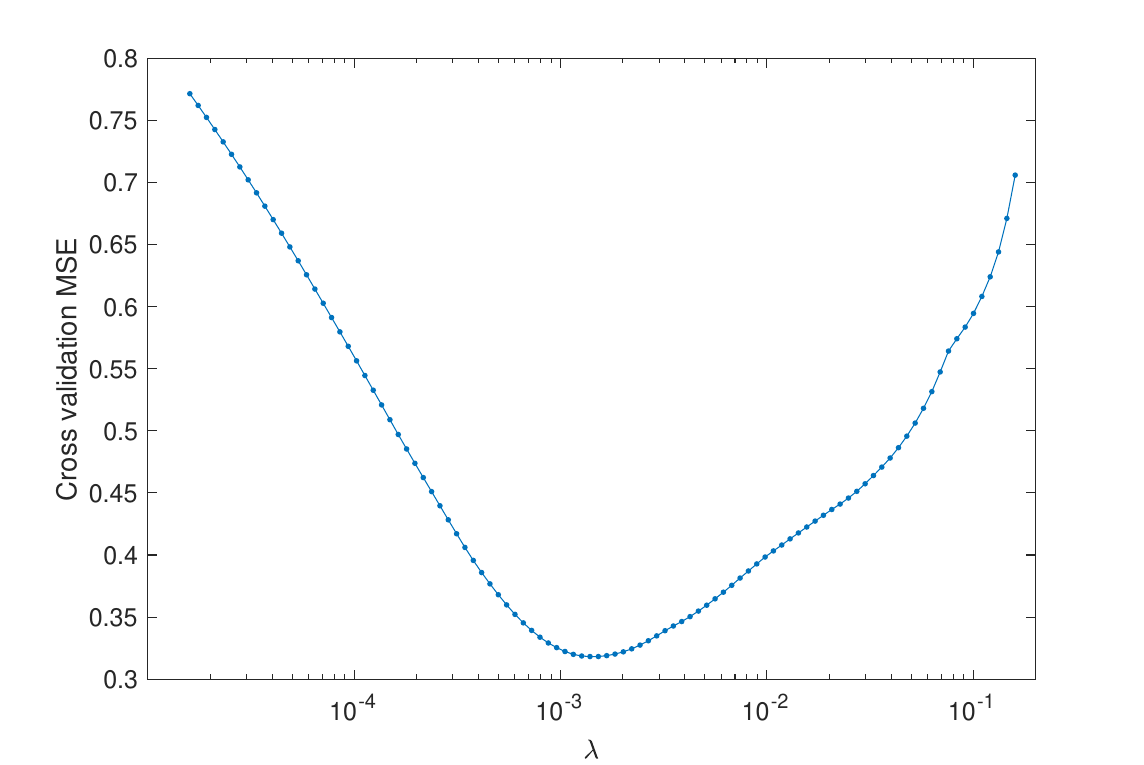}\label{F:comparison-1}} 
	
	\subfigure[RNN]{\includegraphics[scale=0.6]{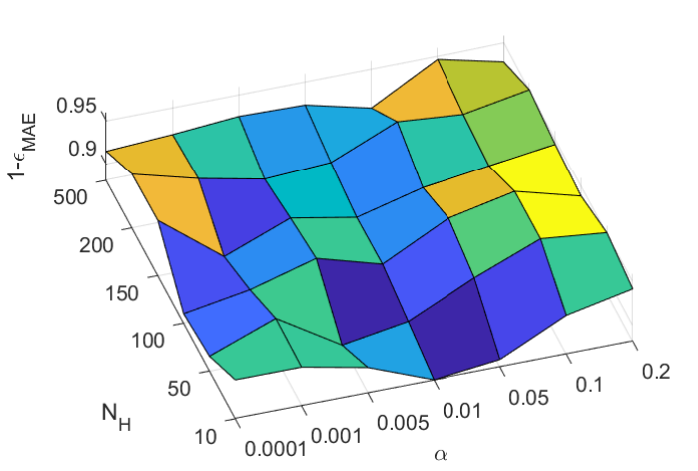}\label{F:comparison-2}}
	
	\subfigure[SVR]{\includegraphics[scale=0.6]{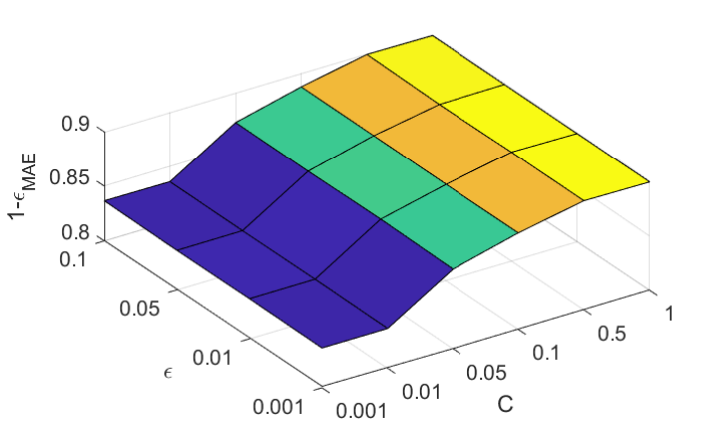}\label{F:comparison-3}}
	\caption{Prediction Performance  with Varying Parameter Settings\label{F:comparison}}		
\end{figure}

We compare these three techniques with two implementations of the proposed approach, the first employs block coordinate decent (BCD) without boosting, Algorithm~\ref{A:2}, and the boosted-BCD (B-BCD) in Algorithm~\ref{A:1}.  The accuracy results are summarized in Table~\ref{table_com}.
\begin{table}[htbp]
	\caption{Accuracy Comparisons}
	\small
	\centering
	\begin{tabular}{|c|cc|}
		\hline
		Method & $1-d_{\mathrm{M}_1}$ & $1 - \epsilon_{\mathrm{MAE}}$
		\\ \hline
		BCD            & 0.9101 $\pm$ 0.0219            &0.9289 $\pm$ 0.0215                                                                     \\
		B-BCD &0.9557 $\pm$ 0.0520&0.9522 $\pm$ 0.0211 \\
		TS-GLM & 0.8519 $\pm$ 0.0266 & 0.9002 $\pm$    0.0373 \\
		SVR & 0.8754 $\pm$ 0.0697& 0.8915 $\pm$ 0.0319\\
		RNN  &0.9473 $\pm$ 0.0315&0.9652 $\pm$ 0.0456\\
		\hline
	\end{tabular}
	\label{table_com}
\end{table}
The results indicate that our B-BCD outperforms the other methods under both metrics, matched only by the RNN.  The differences in accuracy are also significant at the 0.05 level, as per the paired $t$-tests shown in Tables \ref{table_tt1} and \ref{table_tt2}.
\begin{table}[htbp]
	\caption{Paired $t$-test for the Different Methods Under the Accuracy Metric $1 - \epsilon_{\mathrm{MAE}}$ (Significance Level = 0.05)}
	\small
	\centering
	\begin{tabular}{|c|ccccc|}
		\hline
		Method & BCD & B-BCD & TS-GLM &SVR&RNN
		\\ \hline
		BCD          &0  &1&1&1&1                                                                     \\
		B-BCD   & 1 &0&1&1&0  \\
		TS-GLM             &1  &1&0&0&1                                                     \\
		SVR   &  1&1&0&0&1 \\
		RNN    &  1&0&1&1&0 \\
		\hline
	\end{tabular}
	\label{table_tt1}
\end{table}
\begin{table}[htbp]
	\caption{Paired $t$-test for the Different Methods Under the Accuracy Metric $1-d_{\mathrm{M}_1}$ (Significance Level = 0.05)}
	\small
	\centering
	\begin{tabular}{|c|ccccc|}
		\hline
		Method & BCD & B-BCD & TS-GLM &SVR&RNN
		\\ \hline
		BCD          &0  &1&1&1&1                                                                     \\
		B-BCD   & 1 &0&1&1&0  \\
		TS-GLM             &1  &1&0&1&1                                                     \\
		SVR   &  1&1&1&0&1 \\
		RNN    &  1&0&1&1&0 \\
		\hline
	\end{tabular}
	\label{table_tt2}
\end{table}
The two tables show that we can reject the hypothesis that the accuracy achieved by B-BCD or RNN can be achieved by any of the other methods.  It is not surprising that the RNN performs so well as a result of the sophisticated representations afforded by deep neural networks.  It is, however, notable that RNNs are not amenable to real-time implementation due to the heavy computational costs required to train them.  Moreover, interpreting the results is not straightforward with deep neural networks in general. These are the key advantages of our approach: B-BCD can achieve the performance accuracy of a deep neural network, but can be implemented in real-time, it is also easy to extract insights from the results.

\subsection{Real World Example}
\textit{Description of Experiments}: The experiments conducted in this section are meant to test the performance of the proposed approach on the larger real-world dataset described in Sec.~\ref{s_networks}.  We use seven weeks of high-resolution data, 49 days, from the beginning of the first week of December, 2018 to the end of the third week of January, 2019.  We do not distinguish workdays from weekends (as inputs) in our experiments as the proposed method is essentially a dynamic learning approach that is adaptive to time-varying changes and within-week patterns.  The horizons and lags tested in these experiments are $H \in \{1,10,60,120\}$ and $L \in \{10,30,60,120\}$ seconds, and the historical data used for boosting consists of $|\DD| = 20$ consecutive days in each experiment. The number of training and testing time steps are $T_{\tr} = 500$ and $T_{\te} = 100$ seconds, respectively.  We benchmark our approach against the same prediction techniques described in Sec.~\ref{ss_SUMO} at the end of this section, specifically, we compare both BCD and B-BCD against TS-GLM, SVR, and RNN.

\textit{Impact of Choice of Lag ($L$) and Horizon ($H$)}: Again, we measure accuracy using $1- \epsilon_{\mathrm{MAE}}$ and $1 - d_{\mathrm{M}_1}$, and we report mean accuracy and standard deviations (calculated over the 31 sensors) in  Table~\ref{table_4} and Table~\ref{table_5} using different lags and prediction horizons.  The entries in bold are those with the highest accuracy for each prediction horizon.  
\begin{table*}[h!]
	\caption{Testing Accuracy Using $\mathrm{MAE}$ for Real-World Example with 31 sensors ($1 - \epsilon_{\mathrm{MAE}}: \mathrm{Mean}\pm \mathrm{Std}$)}
	\small
	\centering
	\begin{tabular}{|c|cccc|}
		\hline
		$|\DD| \times L$ & $H=1$  & $H=10$ & $H=60$ & $H=120$ 
		\\
		\hline
		$20\times10$            & 0.8758 $\pm$ 0.0205            &  0.8609 $\pm$ 0.0225    &0.8475 $\pm$ 0.0255 & 0.8138 $\pm$ 0.0199 
		\\
		\textbf{$20\times30$}             &  \textbf{0.9091} $\pm$ \textbf{0.0211}        & \textbf{0.9012} $\pm$ \textbf{0.0269}   & \textbf{0.8539} $\pm$ \textbf{0.0215}  &\textbf{0.8432} $\pm$ \textbf{0.0197} 
		\\
		$20\times60$ &0.8764 $\pm$ 0.0252 &0.8631 $\pm$ 0.0215 & 0.8353 $\pm$ 0.0192&0.8155 $\pm$ 0.0197
		\\
		$20\times120$&0.8443 $\pm$ 0.0217&0.8401 $\pm$ 0.0145&0.8178 $\pm$ 0.0278&0.7901 $\pm$ 0.0201
		\\
		\hline
	\end{tabular}
	\label{table_4}
\end{table*}
\begin{table*}[h!]
	\caption{Testing Accuracy Using the Skorokhod $\mathrm{M}_1$ Metric for Real-World Example with 31 sensors ($1 - d_{\mathrm{M}_1}: \mathrm{Mean} \pm \mathrm{Std}$)}
	\small
	\centering
	\begin{tabular}{|c|cccc|}
		\hline
		$|\DD| \times L$ & $H=1$  & $H=10$ & $H=60$ & $H=120$ 
		\\
		\hline
		$20\times10$            & 0.8618 $\pm$ 0.0197               &  0.8509 $\pm$ 0.0215    &0.8453 $\pm$ 0.0221 & 0.8124 $\pm$ 0.0203      
		\\
		\textbf{$20\times30$}             &\textbf{0.8858} $\pm$ \textbf{0.0213}        & \textbf{0.8719} $\pm$ \textbf{0.0235}   & \textbf{0.8419} $\pm$ \textbf{0.0225}  &\textbf{0.8332} $\pm$ \textbf{0.0257}  %& 27.1
		\\
		$20\times60$ &0.8517 $\pm$ 0.0201 &0.8438 $\pm$ 0.0211 & 0.8359 $\pm$ 0.0197&0.8158 $\pm$ 0.0211
		\\
		$20\times120$&0.8313 $\pm$ 0.0227&0.8398 $\pm$ 0.0195&0.8268 $\pm$ 0.0218&0.8108 $\pm$ 0.0211
		\\
		\hline
	\end{tabular}
	\label{table_5}
\end{table*}
We see roughly the same pattern in both tables and similar results to those we observed in Sec.~\ref{ss_SUMO}, namely, that the accuracy decreases as $H$ increases and that longer lags ($L$) do not necessarily mean improved accuracy.  The mean accuracies (taken over all 31 sensors) is no lower than 79.01\% and reaches 90.91\%, while the corresponding standard deviations do not exceed 2.52\%.  The lowest accuracies under $1 - \epsilon_{\mathrm{MAE}}$ and $1 - d_{\mathrm{M}_1}$, respectively, exceed 75.07\% and 76.94\% in 97.5\% of the cases.

We also conduct pair $t$-tests to investigate the impact of the variables $H$ and $L$ and present the results in Table~\ref{table_ttest1} below.  The top part of the table summarizes the results of changing $L$, the bottom part summarizes the effect of changing $H$.  All results suggest that the differences between the different models are significant at the 0.05 level.
\begin{table*}[htbp!]
	\caption{Paired $t$-test for Different Lags and Horizons Using Real-World Data (Significance Level = 0.05)}
	\small
	\centering
	\begin{tabular}{|c|cccc|}
		\hline
		$(H,|\DD| \times L)$ & $(1,20\times10)$    & $(1,20\times30)$   & $(1,20\times60)$ & $(1,20\times120)$ 
		\\ 
		\hline
		$(1,20\times10)$               &0  &1 & 1&1
		\\
		$(1,20\times30)$               &1  &0 & 1 &1
		\\
		$(1,20\times60)$      &1& 1 &0 & 1
		\\
		$(1,20\times120)$    & 1&1  &1 & 0
		\\
		\hline
		$(H,|\DD| \times L)$  & $(1,20\times10)$ & $(10,20\times30)$  & $(60,20\times30)$   & $(120,20\times30)$
		\\ 
		\hline
		
		$(1,20\times30)$            &    0     & 1     &1 &1                                                
		\\
		$(10,20\times30)$            & 1       & 0 & 1&1
		\\
		$(60,20\times30)$  &     1   & 1 &0 &1
		\\
		$(120,20\times30)$               &1  &1 & 1&0
		\\ \hline
	\end{tabular}
	\label{table_ttest1}
\end{table*}

\textit{Choice of Rank Parameter ($r$) and Algorithm Convergence}: We select the case $L = 30$ and $H = 10$ seconds to test the impact of the rank parameter ($r$).  The objective values achieved when the algorithm converges under different values of $r$ are depicted in Fig.~\ref{F:objr}.  We see that the lowest objective value is achieved when $r = 95$ indicating a highly sparse matrix (two orders of magnitude smaller than both $nh \sim |\DD|Ln = 18,600$ and $|\DD|T_{\tr} + T_{\te}=10,100$).
\begin{figure}[h!]
	\centerline{\includegraphics[scale=0.5]{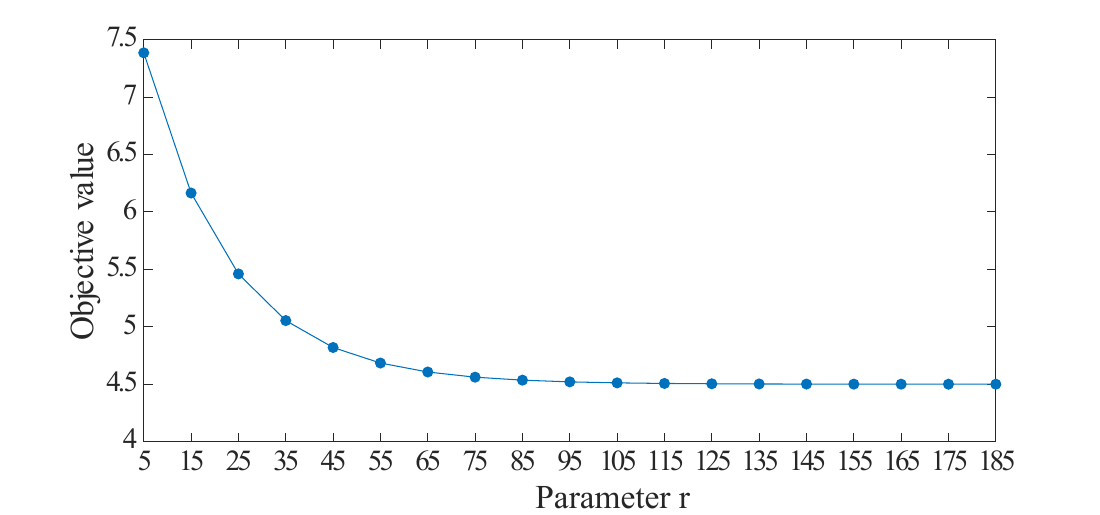}}
	\caption{Relationship Between $r \ge \rank(\ZZ)$ and Objective Function Value at Convergence}
	\label{F:objr}		
\end{figure}
Figures \ref{F:obj} and \ref{F:boost} illustrate the sub-linear convergence rate of the algorithm for the same inputs ($L = 30$ seconds, $H = 10$ seconds, and $r=95$). Fig.~\ref{F:boost} also includes the testing accuracy, which also converges fast.  Note the small gap between the two curves in Fig.~\ref{F:boost}, which suggests that the model generalizes well.
\begin{figure}[h!]
	\centerline{\includegraphics[scale=0.5]{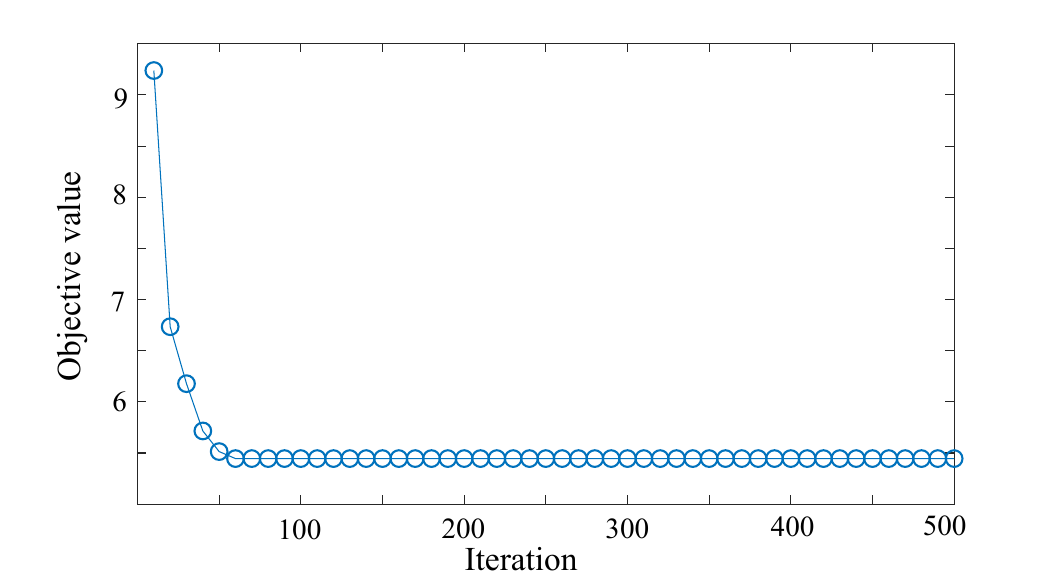}}
	\caption{Convergence Rate}
	\label{F:obj}		
\end{figure}
\begin{figure}[h!]
	\centerline{\includegraphics[scale=0.5]{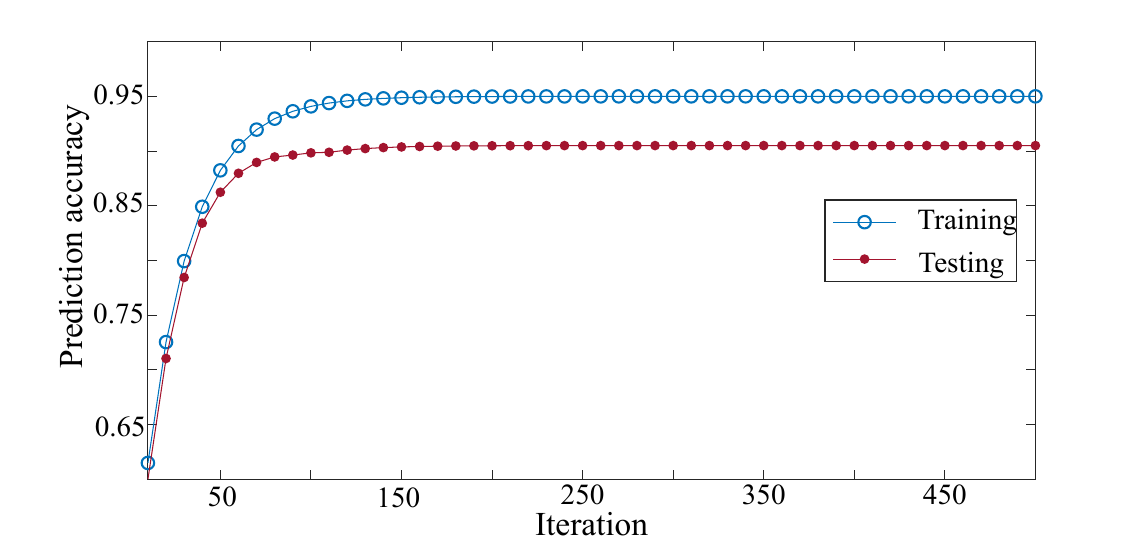}}
	\caption{Sub-Linear Convergence of both Training and Testing Accuracies}
	\label{F:boost}		
\end{figure}

\textit{Comparisons}: We now show the results of our comparisons.  We first provide the accuracy results for all of the methods that we apply, averaged over all 31 sensors along with standard deviations in Table~\ref{table_6}.  The results suggest that our B-BCD outperforms all other methods except for the RNN. Paired $t$-tests were carried out under both accuracy metrics, and the results are listed in Tables \ref{table_tt3} and \ref{table_tt4}.  Under both metrics, we see that the differences between B-BCD and all other approaches (excluding RNN) are significant.  Again, RNN is a good benchmark for comparison, but not implementation due to the heavy computational costs that come with fitting them.  Our B-BCD approach can achieve accuracy that is comparable to a well-trained RNN, but offers computational advantages.
\begin{table}[h!]
	\caption{Accuracy Comparisons for Real-World Problem}
	\small
	\centering
	\begin{tabular}{|c|cc|}
		\hline
		Method & $1-d_{\mathrm{M}_1}$ & $1 - \epsilon_{\mathrm{MAE}}$
		\\ \hline
		BCD            & 0.8101 $\pm$ 0.0219            &0.8589 $\pm$ 0.0215 \\
		B-BCD &0.8858 $\pm$ 0.0213&0.9091 $\pm$ 0.0211 \\
		TS-GLM & 0.8058 $\pm$ 0.0328 & 0.8215 $\pm$    0.0329 \\
		SVR & 0.7867 $\pm$ 0.0161& 0.8015 $\pm$ 0.0135\\
		RNN  &0.8703 $\pm$ 0.0305&0.9127 $\pm$ 0.0192\\
		\hline
	\end{tabular}
	\label{table_6}
\end{table}
\begin{table}[h!]
	\caption{Paired $t$-test for the Different Methods Under the Accuracy Metric $1 - \epsilon_{\mathrm{MAE}}$ (Significance Level = 0.05)}
	\small
	\centering
	\begin{tabular}{|c|ccccc|}
		\hline
		Method & BCD & B-BCD & TS-GLM &SVR&RNN
		\\  \hline
		BCD          &0  &1&1&1&1   \\
		B-BCD   & 1 &0&1&1&0  \\
		TS-GLM &1  &1&0&0&1  \\
		SVR   &  1&1&0&0&1 \\
		RNN    &  1&0&1&1&0 \\
		\hline
	\end{tabular}
	\label{table_tt3}
\end{table}
\begin{table}[h!]
	\caption{Paired $t$-test for the Different Methods Under the Accuracy Metric $1-d_{\mathrm{M}_1}$ (Significance Level = 0.05)}
	\small
	\centering
	\begin{tabular}{|c|ccccc|}
		\hline
		Method & BCD & B-BCD & TS-GLM &SVR&RNN
		\\ \hline
		BCD          &0  &1&1&1&1 \\
		B-BCD   & 1 &0&1&1&0  \\
		TS-GLM  &1  &1&0&1&1 \\
		SVR   &  1&1&1&0&1 \\
		RNN    &  1&0&1&1&0 \\
		\hline
	\end{tabular}
	\label{table_tt4}
\end{table}

\section{Conclusions}
\label{S:conc}
Our contribution can broadly be described as specializing contemporary convex optimization techniques to traffic prediction. These techniques have revolutionized a variety of applications that involve large volumes of data, from image processing to online recommender systems.  However, they seem to have found little or no application in transportation science and traffic management.  Our analysis demonstrates that one can solve large prediction problems in real-time (sub-linear convergence rate) and that the solutions obtained are block coordinate-wise minimizers.  We also demonstrated that training error can be made arbitrarily small with ensemble learning.  The latter are typically used to amalgamate results from heterogeneous techniques.  Our implementation uses historical data as predictors.  We elected to perform ensemble learning in this way for the sake of interpretability of our results.

The analysis in this paper culminates in a bound on the training error and a brief discussion on how this generalizes.  The training error bound is a specialization of a well known bound that comes with the AdaBoost algorithm.  Our bound allows for any type of thresholding. The generalization errors analysis is not complete without an analysis of the VC-dimensions of our models, although our empirical results suggest that the out-of-sample errors are similar to the training error, with differences in accuracy that do not exceed 10\%.  Future work can be conducted along two separate lines:  The first is an in-depth analysis of how these methods generalize, analyzing VC-dimension and other approaches. The VC-dimension gives worst-case bounds, which may or may not be useful in practice.  It would also be useful to consider kernels inspired by traffic physics and how this affects sample complexity. The second involves investigations of what should be considered an acceptable level of error.  This will depend on the application; for example, for signal timing optimization a lag of three seconds in a sensor actuation prediction can trigger a signal status decision that results in unwanted congestion.  We leave these questions to future research.

\section*{Acknowledgment}
This work was supported by the NYUAD Center for Interacting Urban Networks (CITIES), funded by Tamkeen under the NYUAD Research Institute Award CG001 and by the Swiss Re Institute under the Quantum Cities\textsuperscript{TM} initiative.  The authors would also like to acknowledge in-kind support received from the Abu Dhabi Department of Transportation, in the form of the high-resolution traffic data that were used in our experiments.  The opinions expressed in this article are those of the authors alone do not represent the opinions of CITIES or the Abu Dhabi Department of Transportation.

\section*{Supplemental Material}
	Supplemental material including source code and data can be found at \url{https://github.com/lwqangle123/EKMC}.

\appendix
\gdef\thesection{Appendix \Alph{section}}

%\section*{References}
	%% \label{}
	
	%% References
	%%
	%% Following citation commands can be used in the body text:
	%% Usage of \cite is as follows:
	%%   \cite{key}          ==>>  [#]
	%%   \cite[chap. 2]{key} ==>>  [#, chap. 2]
	%%   \citet{key}         ==>>  Author [#]
	
	%% References with bibTeX database:
{ \small
\bibliographystyle{plainnat}
\bibliography{refs}
}
	%% Authors are advised to submit their bibtex database files. They are
	%% requested to list a bibtex style file in the manuscript if they do
	%% not want to use model1-num-names.bst.
	
	%% References without bibTeX database:
	
	% \begin{thebibliography}{00}
	
	%% \bibitem must have the following form:
	%%   \bibitem{key}...
	%%
	
	% \bibitem{}
	
	% \end{thebibliography}

\end{document}